\newcommand{\BB}{\mathbb{B}}
\newcommand{\C}{\mathbb{C}}
\newcommand{\Z}{\mathbb{Z}}
\newcommand{\R}{\mathbb{R}}
\newcommand{\G}{{\mathcal{G}}}
\newcommand{\N}{{\mathcal{N}}}
\newcommand{\E}{\mathbf{E}}
\renewcommand{\H}{\mathcal H}
\newcommand{\Id}{\text{Id}}
\newcommand{\GFF}{\text{GFF}}
\newcommand{\A}{\mathbb{A}}
\newcommand{\eps}{\epsilon}
\renewcommand{\b}{\mathrm{b}}
\newcommand{\w}{\mathrm{w}}
\newtheorem{theorem}{Theorem}
\newtheorem{lemma}[theorem]{Lemma}
\newtheorem{cor}[theorem]{Corollary}
\newtheorem{exer}{Exercise}
\newcommand{\im}{\text{Im}}
\newcommand{\re}{\text{Re}}
\begin{document}
\title{Lectures on dimers}
\author{Richard Kenyon}
\date{} \maketitle

\tableofcontents
\section{Overview}

The planar dimer model is, from one point of view,
a statistical mechanical model of random 
$2$-dimensional interfaces in $\R^3$. In a concrete sense it
is a natural generalization of the simple random walk on $Z$. 
While the simple random walk and its scaling limit, Brownian
motion, permeate all of probability theory and many other
parts of mathematics, higher dimensional models like the dimer model
are much less used or understood. 
Only recently have tools been developed for gaining a mathematical
understanding of two-dimensional random fields. The dimer
model is at the moment the most successful of these
two-dimensional theories. What is remarkable is that
the objects underlying the simple random walk: the Laplacian,
Green's function, and Gaussian measure, are also
the fundamental tools used in the dimer model. 
On the other hand the study of the dimer model actually uses
tools from many other areas of mathematics:
we'll see a little bit of algebraic
geometry, PDEs, analysis, and ergodic theory, at least.

Our goal in these notes is to study the planar dimer model and the 
associated random interface model.
There has been a great deal of recent research on the dimer
model but there are still many interesting open questions
and
new research avenues. 
In these lecture notes we will provide an introduction to the dimer model, leading up to 
results on limit shapes and fluctuations. 
There are a number of exercises and a few open 
questions at the end of each chapter. Our objective is not to give
complete proofs, but we will at least attempt to indicate 
the ideas behind many of 
the proofs. Complete proofs can all be found in the papers
in the bibliography. The main references we used for these notes are
\cite{KOS, KO, Bout, deTil}. The original papers of Kasteleyn \cite{Kast}
and Temperley/Fisher \cite{TF} are quite readable.

Much previous work on the dimer model will not be discussed here;
we'll focus here on dimers on planar, bipartite, periodic
graphs. Of course one can study dimer models in
which all or some of these assumptions have been weakened.
While these more general models have been discussed and
studied in the literature, there are fewer concrete results
in this greater generality. It is our hope that these notes
will inspire work on these more general problems.

\subsection{Dimer definitions}

A {\bf dimer covering}, or {\bf perfect matching}, of a graph is a subset of edges which covers every vertex exactly once, that is, every vertex is the endpoint of exactly one edge.  See Figure \ref{dom}.

In these lectures we will deal only with {\bf bipartite planar} graphs.
A graph is bipartite when the vertices can be colored black and white in such
a way that each edge connects vertices of different colors. Alternatively, this is equivalent to each cycle having even length.
Kasteleyn showed how to enumerate the dimer covers of any planar graph,
but the random surface interpretation we will discuss is valid only for 
bipartite graphs.
There are many open problems involving dimer coverings of non-bipartite planar graphs,
which at present we do not have tools to attack. However at
present we have some nice tools to deal with periodic bipartite planar graphs.

Our prototypical examples are the dimer models on $\Z^2$
and the honeycomb graph. These are equivalent to,
respectively, the domino tiling model 
(tilings with $2\times 1$ rectangles) and the ``lozenge tiling'' model
(tilings with $60^\circ$ rhombi) see Figures \ref{dom} and \ref{loz}.
\begin{figure}[htbp]
\center{\scalebox{1.0}{\includegraphics{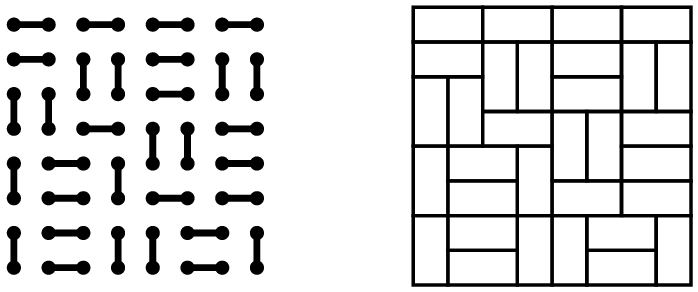}}}
\caption{\label{dom}}
\end{figure}
\begin{figure}[htbp]
\center{\scalebox{0.4}{\includegraphics{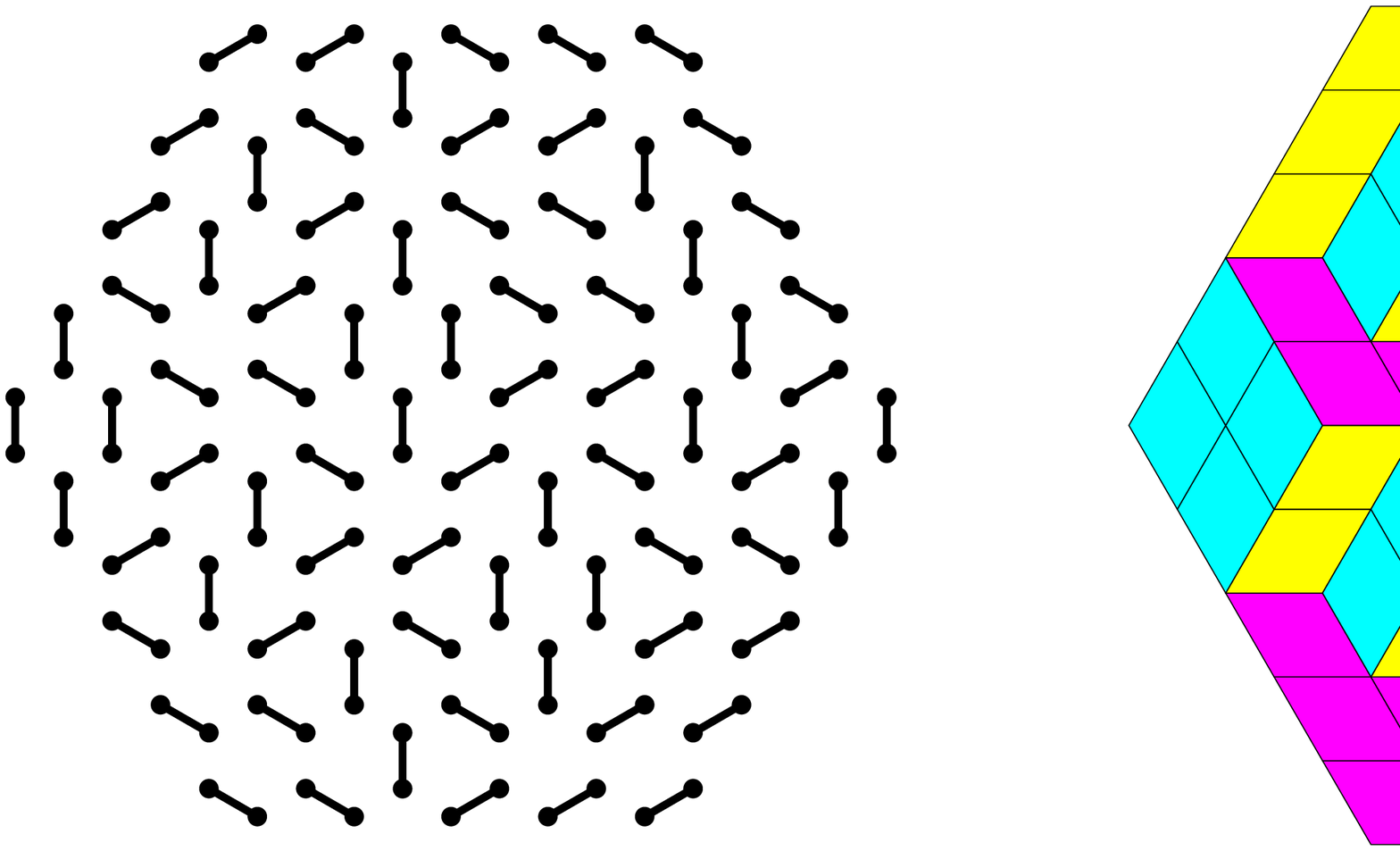}}}
\caption{\label{loz}}
\end{figure}

Dimers on the honeycomb have been studied in chemistry \cite{KHDD,HZK}
where they are called Kekul\'e structures. The honeycomb is after all
the structure of graphite, each carbon atom sharing one double bond
with a neighbor.

\subsection{Uniform random tilings}

Look at a larger domino picture and the lozenge picture, Figures \ref{dominosquare} and
\ref{lozengehexagon}.
\begin{figure}[htbp]
\scalebox{0.7}{\includegraphics{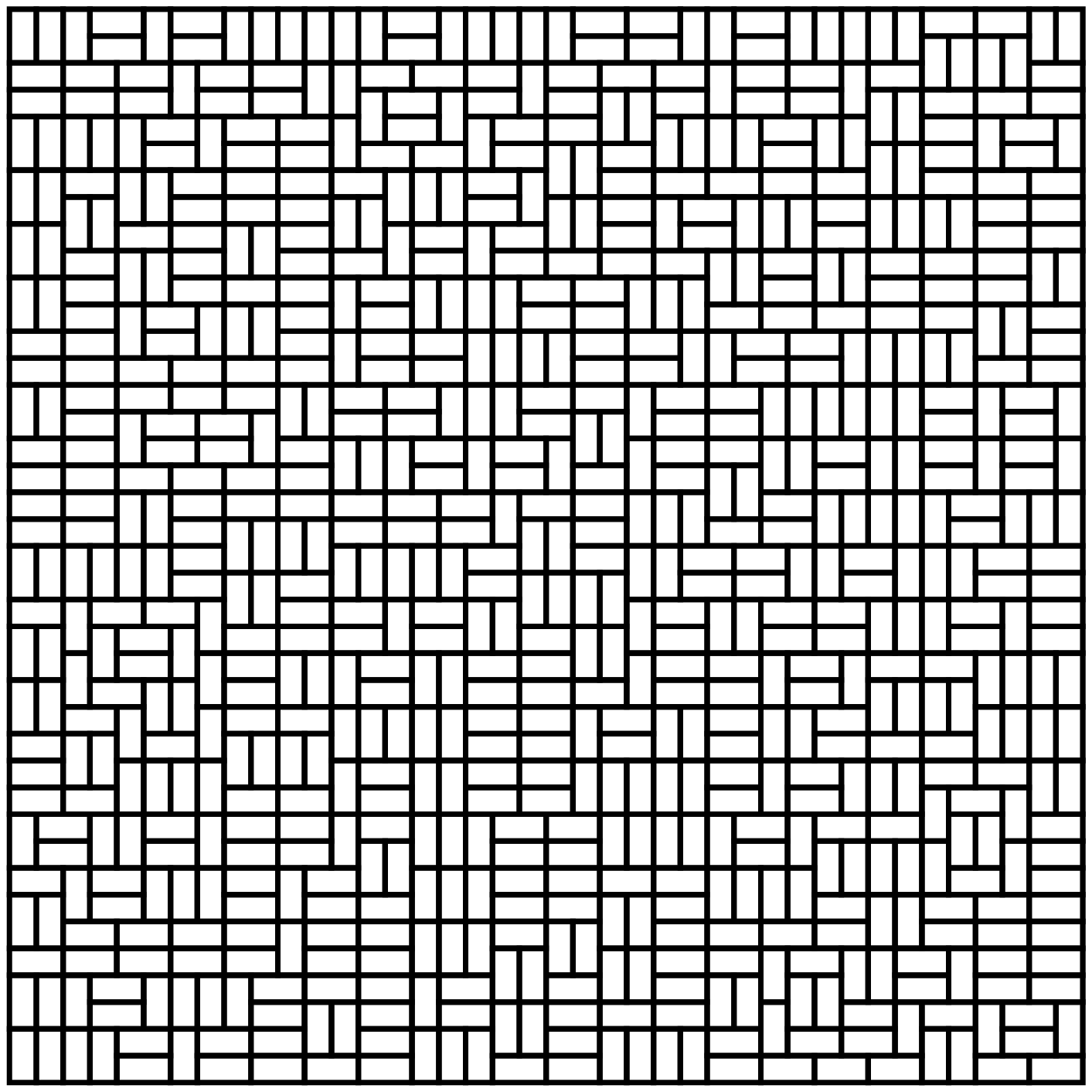}}
\caption{\label{dominosquare}}
\end{figure}
\begin{figure}[htbp]
\scalebox{0.9}{\includegraphics{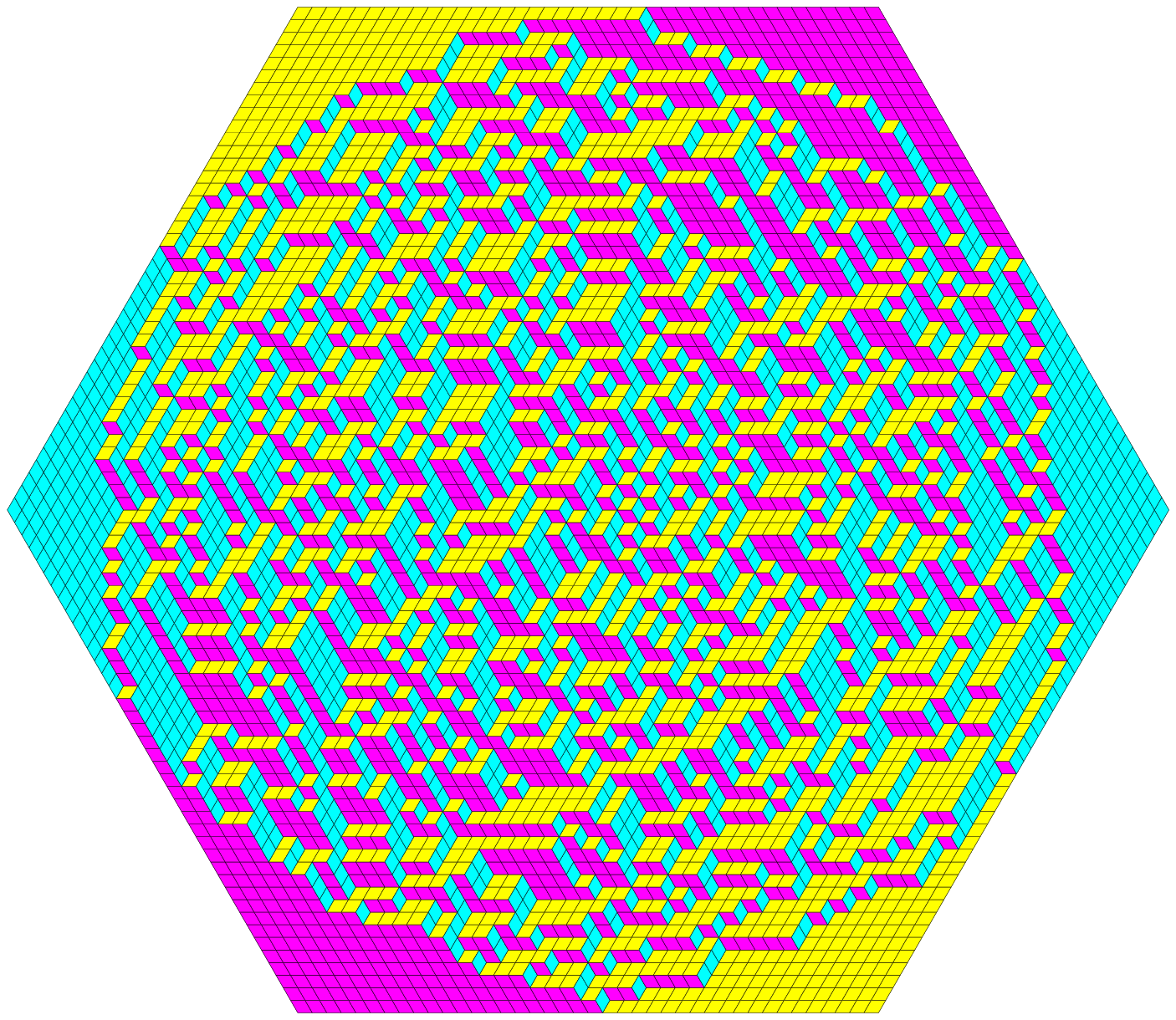}}
\caption{\label{lozengehexagon}}
\end{figure}
These are both uniform random tilings of the corresponding regions,
that is, they are chosen from the distribution in which all tilings are
equally weighted.
In the first case there are about $e^{455}$ 
possible domino tilings and in the second,
about $e^{1255}$ lozenge tilings \footnote{How do you pick a 
random sample from such a large space?}
These two pictures clearly display some very different behavior.
The first picture appears homogeneous 
(and we'll prove that it is, in a well-defined sense), 
while in the second,
the densities of the tiles of a given orientation vary throughout
the region. A goal of these lectures is to understand this phenomenon,
and indeed compute the limiting densities as well as other statistics
of these and other tilings, in a setting of reasonably general boundary conditions.

In figures \ref{lozengeonside1} and \ref{lozengeonside2}, we see a uniform random lozenge tiling
of a triangular shape, and the same tiling rotated so that we see
(with a little imagination)
the fluctuations. These fluctuations are quite small, in fact of order
$\sqrt{\log n}$ for a similar triangle of side $n$. 
\begin{figure}[htbp]
\center{\scalebox{1.0}{\includegraphics{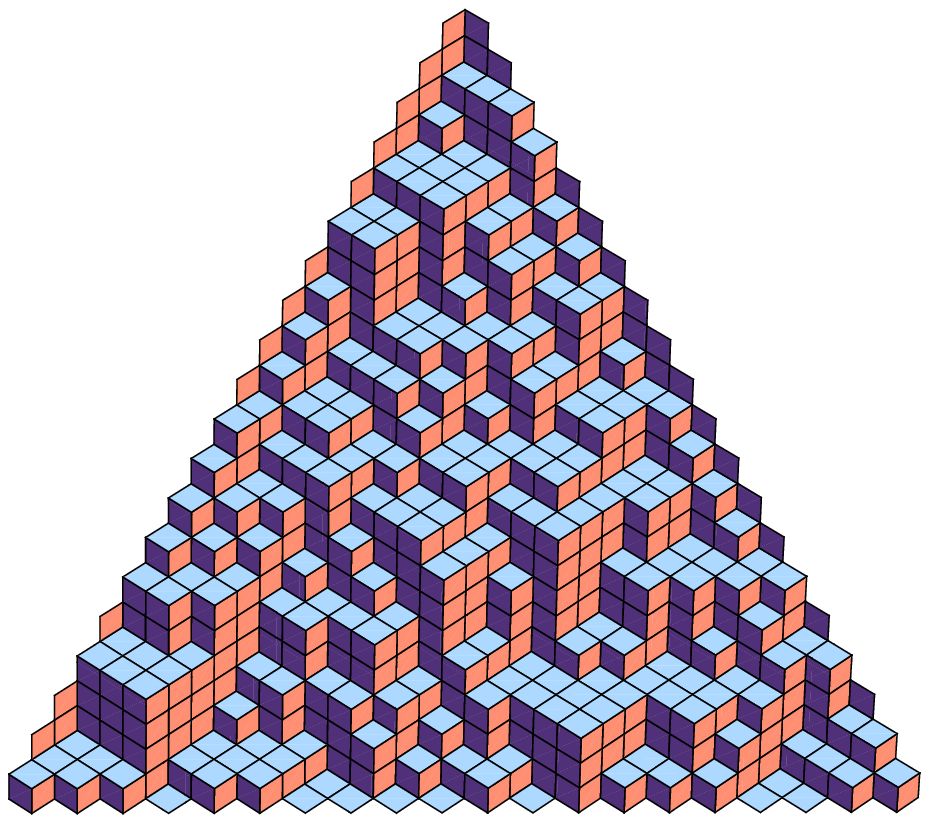}}}
\caption{\label{lozengeonside1}}
\end{figure}
\begin{figure}[htbp]
\scalebox{1.0}{\includegraphics{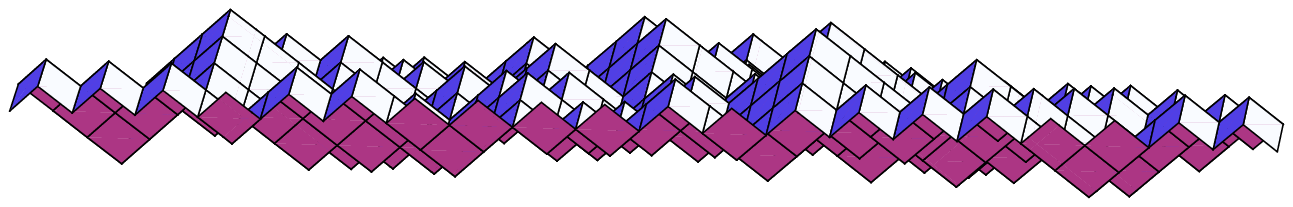}}
\caption{\label{lozengeonside2}}
\end{figure}
This picture should be compared with Figure \ref{SRW} which shows 
the one-dimensional analog of the lozenge tiling of a triangle.
It is just the graph of a simple random walk on $\Z$ of length $n=100$
conditioned to start and end
at the origin. In this case the fluctuations are of order $\sqrt{n}$. Indeed,
if we rescale the vertical coordinate by $\sqrt{n}$ and the horizontal 
coordinate by $n$, the resulting curve converges as $n\to\infty$
to a {\bf Brownian
bridge} (a Brownian motion started at the origin and conditioned to return
to the origin after time $1$). 
\begin{figure}[htbp]
\center{\scalebox{1.0}{\includegraphics{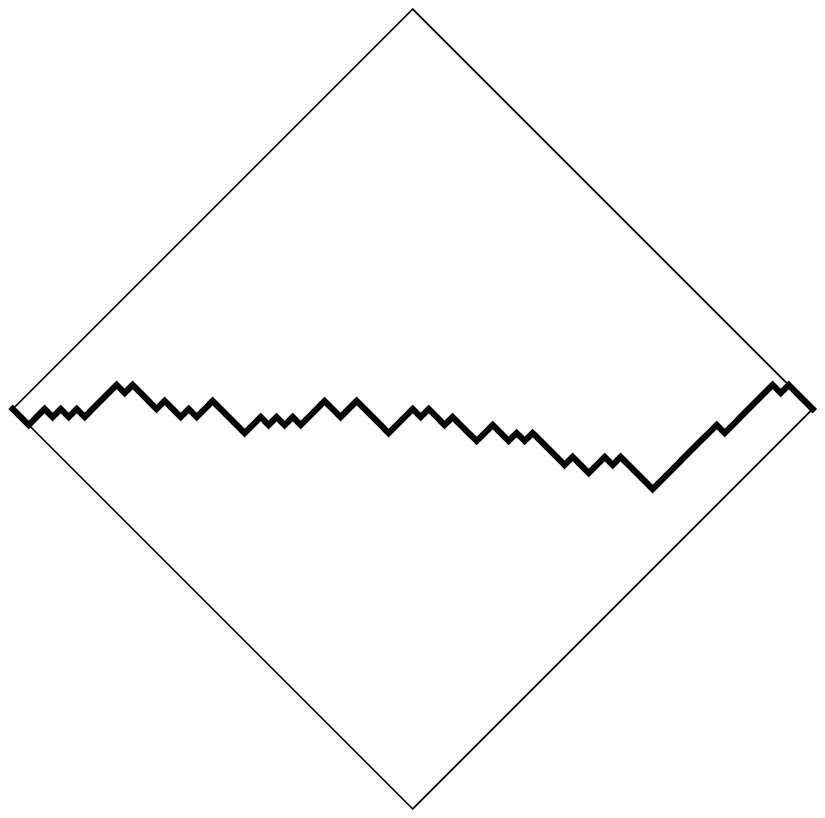}}}
\caption{\label{SRW}}
\end{figure}

The ``scaling limit" of the fluctuations of the lozenge tiling of a triangle is a more complicated object, called
the Gaussian free field. We can think of it as a Gaussian random function
but in fact it is only a random distribution (weak function). 
We'll talk more about it later.

\subsection{Limit shapes}

A lozenge tiling of a simply connected region is the projection
along the $(1,1,1)$-direction of
a piecewise linear surface in $\R^3$. This surface has pieces which 
are integer translates of the sides of the unit cube.  
Such a surface which projects injectively in the $(1,1,1)$-direction is called
a {\bf stepped surface} or {\bf skyscraper surface}.
Random lozenge tilings are therefore random stepped surfaces.
A random lozenge tiling of a fixed region as in Figure \ref{lozengehexagon}
is a random stepped surface spanning a fixed boundary curve in $\R^3$. 

Domino tilings can also be interpreted as random tilings. Here the third coordinate
is harder to visualize, but see Figure \ref{dominoheight} for a definition.
\begin{figure}[htbp]
\scalebox{1.0}{\includegraphics{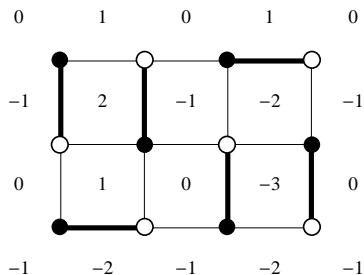}}
\caption{\label{dominoheight} The height is integer-valued on the
faces and changes by $\pm1$ across an unoccupied edge; the change is $+1$ when crossing so that a white vertex is on the left.}
\end{figure}
We'll see below that dimers on any bipartite graph can be interpreted as random surfaces. 

When thought of as random surfaces, these models have a 
{\bf limit shape phenomenon}. This says that, for a 
fixed boundary curve in $\R^3$,
or sequence of converging boundary curves in $\R^3$, if we take a random stepped surface on finer and finer lattices, then with probability tending to $1$
the random surface will lie close to a fixed non-random surface,
the so-called {\bf limit shape}. So this limit shape surface is not just the average surface
but the only surface you will see if you take an extremely fine mesh...the measure
is concentrating as the mesh size tends to zero
on the delta-measure at this surface.

Again the analogous property in one dimension is that the random curve of 
Figure \ref{SRW} tends to a straight line when the mesh size tends to zero.
(In order to see the fluctuations one needs to scale the two coordinates at
different rates. Here we are talking about scaling both coordinates 
by $n$: the fluctuations are zero on this scale.)

As Okounkov puts it, the limit shape surface is in some sense the ``most random''
surface, in the sense that among all surfaces with the given boundary values,
the limit shape is the one which has (overwhelmingly) 
the most discrete stepped surfaces nearby, in fact so many as to dwarf all 
other surfaces combined.

\subsection{Facets}

One thing to notice about lozenge tiling in figure 
\ref{lozengehexagon} is the presence
of regions near the vertices of the hexagon where the lozenges
are aligned. This phenomenon persists in the limit of small
mesh size and in fact, in the limit shape surface there is a {\bf facet}
near each corner, where the limit shape is planar. 
This is a phenomenon which does not occur in one dimension
(but see however \cite{NO}). The limit shapes for dimers
generally contain facets
and smooth (in fact analytic) curved regions separating these facets. 
In the facet the probability of a misaligned lozenge is zero in the limit of small mesh, and in fact
one can show that the probability is exponentially small in the 
reciprocal of the mesh size.
In particular the fluctuations away from the facet are exponentially small.

\subsection{Measures} 

What do we see if we zoom in to a point in figure \ref{lozengehexagon}? That is, consider a sequence of such figures with
the same fixed boundary but decreasing mesh size. Pick a point in the 
hexagon and consider the configuration restricted to a small window
around that point, window which gets smaller as the mesh size goes to zero. One can imagine for example a window of side $\sqrt{\eps}$
when the mesh size is $\eps$. This gives a sequence of
random tilings of (relative to the mesh size) larger and larger domains, and in the limit (assuming that a limit of these ``local measures'' exists)
we will get a random tiling of the plane. 

We will see different types of
behaviors, depending on which point we zoom in on. If we zoom in to a point
in the facet, we will see in the limit a boring figure in which all tiles are aligned.
This is an example of a measure on lozenge tilings of the plane
which consists of a delta measure at a single tiling. This measure 
is an (uninteresting) example of an ergodic Gibbs measure (see definition below). 
If we zoom into a point in the non-frozen region, 
one can again ask what
limiting measure on tilings of the plane is obtained. One of the important open problems
is to understand this limiting measure, in particular to 
prove that the limit exists. Conjecturally it exists and only depends
on the slope of the average surface at that point and not on any other
property of the boundary conditions.
For each possible slope $(s,t)$ we will define below a measure $\mu_{s,t}$,
and the {\bf local statistics conjecture} states that, for any 
fixed boundary,  $\mu_{s,t}$ is the 
measure which occurs in the limit at any point 
where the limit shape has slope $(s,t)$. 
For certain boundary conditions this has been proved \cite{Kenyon.fluct}. 

These measures $\mu_{s,t}$ are discussed in the next section.

\subsubsection{Ergodic Gibbs Measures}

What are the natural probability measures on lozenge tilings of the whole plane? 
One might require for example that the measure be translation-invariant.
In this case one can associate a {\bf slope} or gradient to the measure, which is the expected
change in height when you move in a given direction.

Another natural condition to impose on a probability measure on tilings of the plane is that it be a {\bf Gibbs measure}, that is, (in this case)
a probability measure which is the limit of the uniform
measure on tilings of finite regions, as the regions increase in size to fill out the whole plane. There is a generalization of this definition
to dimers with edge weights.

A remarkable theorem of Sheffield states that for each slope $(s,t)$
for which there
is an invariant measure, there is a unique ergodic Gibbs measure
(ergodic means not a convex combination of other invariant measures).
We denote this measure $\mu_{s,t}$.

\subsubsection{Phases}

Ergodic Gibbs measures come in three types, or {\bf phases}, 
depending on 
the fluctuations of a typical (for that measure) surface. 
Suppose we fix the height at a face near the origin to be zero.
A measure is said to be a {\bf frozen phase} if the height 
fluctuations are
finite almost surely, that is, the fluctuation of the surface away
from its mean value is almost surely bounded, no matter how far away from the origin
you are.
A measure is said to be in a {\bf liquid phase} if the fluctuations 
have variance increasing with increasing distance, that is, 
the variance of the height at a point tends to infinity almost surely
for points farther and farther from the origin.
Finally a measure is said to be a {\bf gaseous phase}
if the height fluctuations  
are unbounded
but the variance of the height at a point is bounded
independently of the distance from the origin.
 
We'll see that for uniform lozenge or domino tilings we can have both liquid and frozen
phases but not gaseous phases. An example of a graph for which we have a 
all three phases is the square octagon dimer model, see Figure \ref{squareoctagon}.
\begin{figure}[htbp]
\center{\scalebox{0.6}{\includegraphics{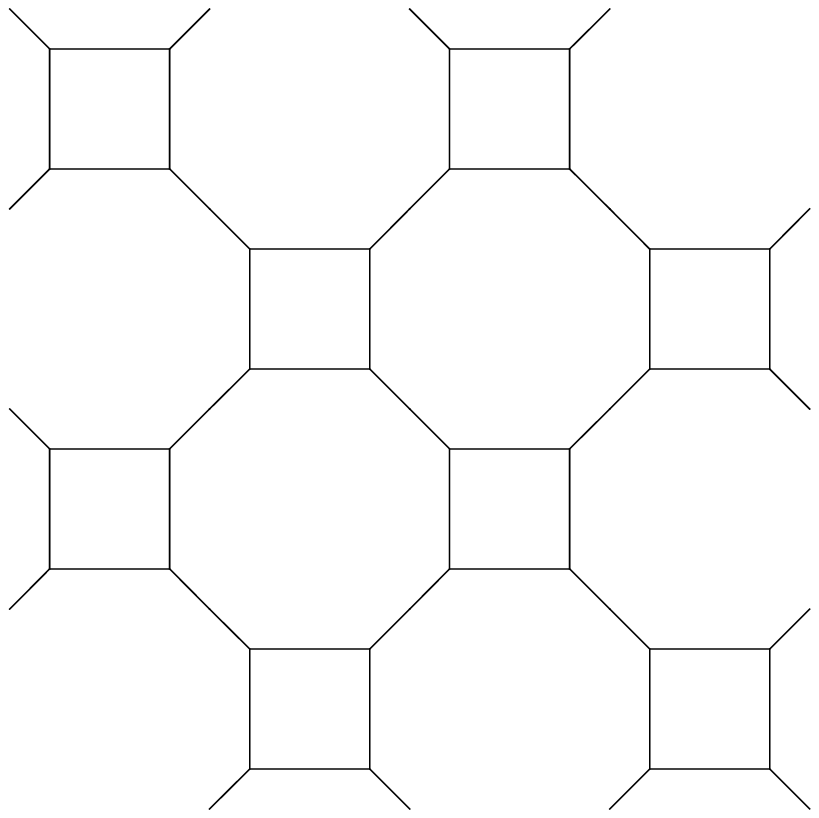}}}
\caption{\label{squareoctagon}}
\end{figure}
In general the classification of phases depends on algebraic properties
of the underlying graph and edge weights, as we'll see.

\subsection{Other random surface models}

There are a number of other statistical mechanical models
with similar behavior to the planar dimer model. 
The most well-known of these is
the six-vertex model, or ``square ice'' model. This model consists of
orientations of the edges of the square grid (or other planar graph of degree $4$)
with the restriction that at each vertex there are two ``ingoing''
and two ``outgoing'' arrows. 
This model also has an interpretation as a random interface model, with a limit shape phenomenon, facet formation, and so on.
The dimer model is fundamentally easier to deal with
than this model, and other models of this type, essentially because enumeration of dimer configurations
can be accomplished using determinants, while the $6$-vertex model
requires more complicated and mysterious algebraic structures. 
Indeed, the quantities which have been computed for
these other models are been quite limited, 
and no one has succeeded in writing down a 
formula for a limit shape in any of these other models.

\section{The height function}

We show here how to define the height function for dimers on any bipartite graph.
This allows us to give a ``random surface" interpretation for dimers
on any planar bipartite graph, using the height function as the third coordinate.

\subsection{Graph homology}

Let $\G$ be a connected 
planar graph or connected graph embedded on a surface.
Let $\Lambda^0=\Lambda^0(\G)$ be the space of functions on the vertices of $\G$.

A {\bf flow}, or {\bf $1$-form}, is a function $\omega$
on oriented
edges of $\G$ which is antisymmetric under changing orientation:
if $e$ is an edge connecting vertices $v_1$ and $v_2$
and oriented from $v_1$ to $v_2$,
then $\omega(e)=-\omega(-e)$ where by $-e$ we denote the same edge oriented
from $v_2$ to $v_1$. Let $\Lambda^1$ be the space of
$1$-forms on $\G$.

We define a linear operator $d\colon\Lambda^0\to\Lambda^1$ by
$dg(v_1v_2)=g(v_2)-g(v_1)$.
The transpose of the operator $d$ (for the natural bases 
indexed by vertices and edges) is $d^*$, the {\bf divergence}.
The divergence $d^*\omega$ of a flow $\omega$ is a function on vertices 
defined by $$d^*\omega(v)=\sum_{e} \omega(e)$$
where the sum is over edges starting at $e$. A positive divergence at $v$
means $v$ is a source: water is flowing into the graph at $v$. A negative
divergence is a sink: water is flowing out of the graph at $v$.

We define
a {\bf $2$-form} to be a function on oriented faces which is antisymmetric
under changing orientation. Let $\Lambda^2$ be the space of 
$2$-forms.
We define $d:\Lambda^1\to\Lambda^2$, the {\bf curl},
by: for a $1$-form $\omega$
and face $F$,  $d\omega(F)=\sum_e\omega(e)$, where the sum is over edges bounding $F$ oriented consistently (cclw)
with the orientation of $F$. 
The $1$-forms $\omega$ for which $d\omega=0$ are {\bf cocycles}.
The $1$-forms in the image of $d\Lambda^0$ are {\bf coboundaries},
or {\bf gradient flows}.

Note that $d\circ d$ as a map from $\Lambda^0$ to $\Lambda^2$
is identically zero. Moreover 
for planar embeddings, the space of cocycles modulo the space of coboundaries is
trivial, which is to say that
every cocycle is a coboundary.
Equivalently, the sequence
$$0\rightarrow \Lambda^0\stackrel{d}{\rightarrow}\Lambda^1\stackrel{d}{\rightarrow}\Lambda^2\rightarrow 0$$
is exact. 

For a nonplanar graph embedded on a surface in such a way that
every face is
a topological disk, the space of cocycles modulo the space of coboundaries 
is the $1$-homology of the surface. In particular for a graph embedded
in this way on a torus, the $1$-homology is $\R^2$.
The $1$-forms
which are nontrivial in homology are those which have non-zero
net flux across the horizontal and/or vertical loops around the torus.
The point in $\R^2$ whose coordinates give these two net fluxes is the homology class of the $1$-form.

\subsection{Heights}

Given a dimer cover $M$ of a bipartite graph, there is a naturally associated flow $\omega_M$:
it has value $1$ on each edge occupied by a dimer, when the edge is
oriented from its white vertex to its black vertex. Other edges have flow $0$.
This flow has divergence $d^*\omega_M=\pm1$ 
at white, respectively black vertices.

In particular given two dimer covers of the same graph, the difference
of their
flows is a divergence-free flow. 

Divergence-free flows on planar graphs are dual to gradient flows,
that is, 
Given a divergence-free flow and a fixed face $f_0$, one can define 
a function $h$ on all faces of $\G$ as follows: $h(f_0)=0$,
and for any other face $f$, $h(f)$ is the net flow crossing
(from left to right) a path in the dual graph from $f_0$ to $f$. 
The fact that the flow is divergence-free means that $h(f)$ does
not depend on the path from $f_0$ to $f$. 

To define the height function for a dimer cover, we proceed as follows.
Fix a flow $\omega_0$ with divergence $1$ at white vertices and divergence $-1$
at black vertices ($\omega_0$ might come from a fixed dimer cover but this
is not necessary). Now for any dimer cover $M$,
let $\omega_M$ be its corresponding flow. Then the difference 
$\omega_M-\omega_0$
is a divergence-free flow. Let $h=h(M)$ be the corresponding
function on faces of $\G$ (starting from some fixed face $f_0$). Then $h(M)$ is the {\bf height function}
of $M$. It is a function on faces of $\G$.
Note that if $M_1,M_2$ are two dimer coverings then
$h(M_1)-h(M_2)$ does not depend on the choice of $\omega_0$. In particular
$\omega_0$ is just a ``base point" for $h$ and the more natural quantity
is really the difference $h(M_1)-h(M_2)$. 

For lozenge tilings a natural choice of base point flow is 
the flow $\omega_0$ with value $1/3$ on each edge (oriented from white to black). In this case it is not hard to see that
$h(M)$ is just, up to an additive constant, the height (distance from the plane $x+y+z=0$)
when we think of a lozenge tiling
as a stepped surface. Another base flow, $\omega_a$, 
which is one
we make the most use of below, is the flow which is $+1$
on all horizontal edges and zero on other edges.
For this base flow the three axis planes (that is, the dimer covers
using all edges of one orientation) have slopes 
$(0,0),(1,0)$ and $(0,1)$.

Note that for a bounded subgraph in the honeycomb,
the flow $\omega_0$ or $\omega_a$
will typically not have divergence $\pm1$ at white/black vertices
on the boundary. This just means that the boundary height function is
nonzero: thought of as a lozenge tiling, the boundary curve is 
not flat.

Similarly, for domino tilings a natural choice for $\omega_0$ is $1/4$ on each edge, oriented from white to black.
This is the definition of the height function for dominos as defined by Thurston
\cite{Thu} and also ($1/4$ of) that in Figure \ref{dominoheight}. 
Thurston used it to give a linear-time algorithm for tiling simply-connected
planar regions with dominos (that is deciding whether or not
a tiling exists and building one if there is one). 
 
For a nonplanar graph embedded on a surface, the flows can be defined as above, but not the height functions in general. This is because there
might be some {\bf period}, or height change, along topologically
nontrivial closed dual loops. This period is however a homological
invariant in the sense that two homologous loops will have the same
period. For example for a graph embedded on a torus,
we can define two periods $h_x,h_y$ for loops in homology
classes $(1,0)$ and $(0,1)$ respectively. Then any loop in 
homology class $(x,y)\in\Z^2$ will have height change $xh_x+yh_y$.

\begin{exer}
{}From an $8\times 8$ checkerboard, two squares are removed
from opposite corners. In how many ways can the resulting figure be
tiled with dominos?
\end{exer}

\begin{exer}
Using the height function defined in Figure \ref{dominoheight},
find the lowest and highest tilings of a $2m\times 2n$ rectangle.
\end{exer}

\begin{exer} Take two hexagons with edge lengths 
$11,13,11,13,11,13$
in cyclic order, and glue them together along one of their sides of
length $11$. Can the resulting figure be tiled with lozenges?
What if we glue along a side of length $13$?
\end{exer}

\begin{exer}
{}From a $2n\times 2n$ square, a subset of the squares on the lower
edge are removed. For which subsets does there exist a domino
tiling of the resulting region?
\end{exer}

\begin{exer} For a finite graph $\G$,
let $\Omega$ be the set of 
flows $\omega$ satifying $\omega(e)\in[0,1]$
when $e$ is oriented from white to black,
and having divergence $\pm 1$ and white/black vertices,
respectively.  Show that $\Omega$ is a convex polytope. 
Show that the
dimer coverings of $\G$ are the vertices of this polytope. In particular
if $\Omega\neq\emptyset$ then there is a dimer cover. 
\end{exer}

\section{Kasteleyn theory}

We show how to compute the number of dimer coverings of any bipartite planar graph using the KTF (Kasteleyn-Temperley-Fisher)  technique.
While this technique extends to nonbipartite planar graphs,
we will have no use for this generality here.

\subsection{The Boltzmann measure}

To a finite bipartite planar graph $\G=(B\cup W,E)$
with a positive real function $w:E\to\R_{>0}$ 
on edges, we define a probablity measure $\mu=\mu(\G,w)$ on dimer covers
by, for a dimer covering $M$, 
$$\mu(M)=\frac1Z\prod_{e\in M}w(e)$$
where the sum is over all dimers of $M$ and
where $Z$ is a normalization constant called the {\bf partition function}, 
defined to be
$$Z=\sum_M\prod_{e\in M}w(e).$$

\subsection{Gauge equivalence}

If we change the weight function $w$ by multiplying edge weights of all edges incident to a single vertex $v$ by a constant
$\lambda$, then $Z$ is multiplied by $\lambda$, and
the measure $\mu$ does not change, since each dimer cover
uses exactly one of these edges. 
So we define two weight functions $w,w'$ to be equivalent, $w\sim w'$,
if one can be obtained from the other by a sequence of such
multiplications. 
It is not hard to show that $w\sim w'$ if and only if 
the {\bf alternating products} along faces are equal:
given a face with edges $e_1,e_2,\dots,e_{2k}$ in cyclic
order, the quotients $$\frac{w(e_1)w(e_3)\dots w(e_{2k-1})}{w(e_2)w(e_4)
\dots w(e_{2k})}$$ and $$\frac{w'(e_1)w'(e_3)\dots w'(e_{2k-1})}{w'(e_2)w'(e_4)
\dots w'(e_{2k})}$$ (which we call alternating products) must be equal.
For nonplanar graphs, gauge equivalence is the same as having equal
alternating products along all cycles (and it suffices to consider 
cycles in a homology basis). 

The proof is easy, and in fact is just a little homology theory:
$\log w$ is a $1$-form (if you orient edges from white to black)
and $w$ is equivalent to $w'$
if and only if $\log w-\log w'=df$ for some function $f\in\Lambda^0$.
However $\log w-\log w'=df$ for a planar graph is equivalent to  
$d\log w=d\log w'$ which can be interpreted as saying that the alternating
products of $w$ and $w'$ on cycles are equal.

\subsection{Kasteleyn weighting}\label{Kweighting}

A {\bf Kasteleyn weighting} of a planar bipartite graph
is a choice of sign for each undirected edge with the property that  
each face with $0\bmod 4$ edges has an odd number of $-$ signs and
each face with $2\bmod 4$ edges has an even number of $-$ signs.

In certain circumstances it will be convenient to use
complex numbers of modulus $1$ rather than signs $\pm1$.
In this case the condition is that the alternating product of 
edge weights (as defined above) around a face is negative real or positive real
depending on whether the face has $0$ or $2 \bmod 4$ edges.

This condition appears mysterious at first but we'll see why it is important below\footnote{The condition might appear more natural
if we note that the alternating product is required to be $e^{\pi iN/2}$
where $N$ is the number of triangles in a triangulation of the face}.
We can see as in the previous section that any two
Kasteleyn weightings are gauge equivalent: they can be obtained
one from the other by a sequence of operations consisting
of multiplying all edges at a vertex by a constant.

The existence of a Kasteleyn weighting is also easily established
for example using spanning trees. We leave this fact to the reader,
as well as the proof of the following (easily proved by induction)
\begin{lemma}\label{signchange}
Given a cycle of length $2k$ enclosing $\ell$ points,
the alternating product of signs around this cycle is
$(-1)^{1+k+\ell}$.
\end{lemma}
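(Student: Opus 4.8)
The plan is to induct on the number $N$ of faces of $\G$ enclosed by the cycle $C$. First I would reduce the statement: since the weights are $\pm1$, each $w(e)$ equals its own inverse, so the ``alternating product'' around $C$ is just the ordinary product $s(C):=\prod_{e\in C}w(e)\in\{\pm1\}$, and the goal becomes $s(C)=(-1)^{1+k+\ell}$. The base case $N=1$ is immediate from the definition of a Kasteleyn weighting: then $C$ is the boundary of a single face and $\ell=0$, and the weighting is set up precisely so that $s(C)=-1$ when $C$ has length $0\bmod 4$ (i.e.\ $k$ even) and $s(C)=+1$ when $C$ has length $2\bmod 4$ (i.e.\ $k$ odd); both cases read $s(C)=(-1)^{1+k}$, as required.

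For the inductive step I would cut the closed disk $D$ bounded by $C$ into two smaller disks. The first task is to produce a simple path $Q\subseteq D$ with both endpoints on $C$ and all its other vertices in the open disk and off $C$ (a chord, $|Q|=1$, is allowed). This should follow from planarity together with connectedness: when $N\ge 2$ the part of $\G$ lying in $\bar D$ is strictly larger than $C$, and it is connected because no edge of $\G$ can cross $C$, so starting from any edge or vertex off $C$ and walking out to $C$ in both directions, then trimming to the first and last visits, yields such a $Q$. Cutting along $Q$ splits $C$ into two arcs $C_a,C_b$ and $D$ into two disks bounded by the simple cycles $C_1=C_a\cup Q$ and $C_2=C_b\cup Q$; each of these disks contains at least one face and together they contain all $N$ faces, so each encloses strictly fewer than $N$ faces and the inductive hypothesis applies to $C_1$ and $C_2$.

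Then I would simply match parities. Writing $|C_i|=2k_i$ with $\ell_i$ enclosed vertices, and $m=|Q|$: counting edges gives $k=k_1+k_2-m$; the $m-1$ interior vertices of $Q$ together with the two open sub-disks exhaust the open disk of $C$, so $\ell=\ell_1+\ell_2+(m-1)$; and $s(C_1)s(C_2)=s(C)$ because the edges of $Q$ occur once in each of $C_1,C_2$ while those of $C$ occur once, and $\big(\prod_{e\in Q}w(e)\big)^2=1$. Hence $s(C)=s(C_1)s(C_2)=(-1)^{1+k_1+\ell_1}(-1)^{1+k_2+\ell_2}=(-1)^{k_1+k_2+\ell_1+\ell_2}$, which is exactly $(-1)^{1+k+\ell}$ after substituting the two identities above; the induction closes.

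The only real subtlety I anticipate is the geometric bookkeeping in the inductive step: arranging the cut so that it honestly reduces the face count \emph{and} keeps both resulting cycles simple. In particular it seems much cleaner to cut along an arbitrary interior path $Q$ than to try to ``peel off'' a face adjacent to $C$, since such a face may meet $C$ along several disjoint arcs and the naive peeling then fails to produce a simple cycle at all. Once $Q$ is chosen as above, every remaining step is an elementary count, and it is exactly the $\pm1$-valued weights that make the sign product $s$ multiplicative under the cut.
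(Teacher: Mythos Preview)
The paper actually gives no proof of this lemma: it merely remarks that it is ``easily proved by induction'' and later restates it as an exercise. Your induction on the number $N$ of enclosed faces is exactly what is intended, and the parity bookkeeping in the inductive step ($k=k_1+k_2-m$, $\ell=\ell_1+\ell_2+m-1$, $s(C)=s(C_1)s(C_2)$) is correct.

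The one soft spot is the existence of the cut $Q$. Connectedness of $\G\cap\bar D$ alone does not suffice: if the interior is joined to $C$ through a single cut vertex---for instance an interior $4$-cycle attached to one vertex of $C$ by a single edge---then $N=2$, yet every walk from the interior to $C$ exits through that one vertex, and no simple path with two distinct endpoints on $C$ exists. What makes your argument go through is the implicit assumption, already built into the paper's statement of the Kasteleyn face condition, that face boundaries are simple cycles, i.e.\ that $\G$ is $2$-connected. Under this hypothesis $N=1$ genuinely forces $\ell=0$, and for $N\ge2$ any face inside $D$ sharing an edge with $C$ has a simple boundary cycle distinct from $C$; a minimal sub-arc of that boundary lying off $C$ furnishes your $Q$ directly.
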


Note finally that for the (edge-weighted) honeycomb graph, 
all faces have $2\bmod 4$ edges and so no signs are necessary
in the Kasteleyn weighting.

\subsection{Kasteleyn matrix}

A {\bf Kasteleyn matrix} is a weighted, signed adjacency matrix
of the graph $\G$. Given a Kasteleyn weighting of $\G$, define
a $|B|\times |W|$ matrix $K$ by
$K(\b,\w)=0$ if there is no edge from $\w$ to $\b$,
otherwise $K(\b,\w)$ is the Kasteleyn weighting times the edge weight 
$w(\b\w)$. 

For the graph in Figure \ref{3X1} with Kasteleyn weighting 
indicated, the Kasteleyn matrix is 
$$\left(\begin{matrix}a&1&0\\1&-b&1\\0&1&c\end{matrix}\right).$$
\begin{figure}[htbp]
\scalebox{1.0}{\includegraphics{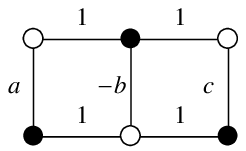}}
\caption{\label{3X1}}
\end{figure}
Note that gauge transformation corresponds to pre- or post-multiplication
of $K$ by a diagonal matrix. 

\begin{theorem}[\cite{Kast, TF}]\label{Z}
$ Z=|\det K|.$
\end{theorem}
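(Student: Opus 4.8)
The plan is to expand $\det K$ as a signed sum over bijections and to show that, thanks to the Kasteleyn weighting, every nonzero term in this sum carries the same sign, so that no cancellation actually occurs. First I would dispose of the degenerate cases: if $|B|\neq|W|$ then $\G$ has no dimer cover and (reading $\det K$ as $0$ for a non-square $K$) both sides vanish; similarly if $|B|=|W|$ but $\G$ has no perfect matching then every term below is zero. So assume $|B|=|W|=n$ and write $K(\b,\w)=s(\b\w)\,w(\b\w)$, where $s(\b\w)\in\{\pm1\}$ is the Kasteleyn sign (and the entry is $0$ if there is no edge). Then
$$\det K=\sum_{\sigma}\mathrm{sgn}(\sigma)\prod_{\b\in B}K(\b,\sigma(\b)),$$
summed over bijections $\sigma\colon B\to W$; a term is nonzero exactly when $M_\sigma:=\{\b\,\sigma(\b):\b\in B\}$ is a perfect matching, so
$$\det K=\sum_{M}\mathrm{sgn}(\sigma_M)\Big(\prod_{e\in M}s(e)\Big)\prod_{e\in M}w(e),$$
the sum now running over perfect matchings $M$. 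Since the weights $w(e)$ are positive, it is enough to show that the sign $\varepsilon(M):=\mathrm{sgn}(\sigma_M)\prod_{e\in M}s(e)$ does not depend on $M$; then $\det K=\varepsilon\cdot Z$ for a global $\varepsilon\in\{\pm1\}$ and the theorem follows.

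To compare $\varepsilon(M_1)$ with $\varepsilon(M_2)$ I would examine the superposition $M_1\triangle M_2$. Ignoring doubled edges, this is a disjoint union of simple cycles $C_1,\dots,C_m$, each of even length $2k_j$ and alternating between $M_1$-edges and $M_2$-edges. The permutation $\sigma_{M_2}^{-1}\circ\sigma_{M_1}$ of $B$ fixes every black vertex not lying on these cycles and acts as a single $k_j$-cycle on the $k_j$ black vertices of $C_j$, so $\mathrm{sgn}(\sigma_{M_1})\,\mathrm{sgn}(\sigma_{M_2})=\prod_j(-1)^{k_j-1}$. On the other hand $\prod_{e\in M_1}s(e)\cdot\prod_{e\in M_2}s(e)=\prod_{e\in M_1\triangle M_2}s(e)=\prod_j\prod_{e\in C_j}s(e)$, and for $\pm1$ weights the product around a cycle equals its alternating product, which by Lemma \ref{signchange} is $(-1)^{1+k_j+\ell_j}$, where $\ell_j$ is the number of vertices of $\G$ enclosed by $C_j$. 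Combining, $\varepsilon(M_1)\,\varepsilon(M_2)=\prod_j(-1)^{k_j-1}(-1)^{1+k_j+\ell_j}=\prod_j(-1)^{\ell_j}$.

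It remains to see that each $\ell_j$ is even, and this is where planarity is essential. Every vertex of $C_j$ is $M_1$-matched to its neighbour along $C_j$; hence a vertex strictly inside $C_j$ cannot be $M_1$-matched to a vertex of $C_j$, nor---since the corresponding edge would have to cross the cycle $C_j$ in the plane---to a vertex outside $C_j$. So $M_1$ restricts to a perfect matching of the set of vertices enclosed by $C_j$, which forces $\ell_j$ to be even. Therefore $\varepsilon(M_1)=\varepsilon(M_2)$ for all $M_1,M_2$, every nonzero term in the expansion of $\det K$ has the common sign $\varepsilon$, and $|\det K|=\sum_M\prod_{e\in M}w(e)=Z$.

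The crux is this last paragraph together with the sign accounting in the previous one: one must verify that $\sigma_{M_2}^{-1}\circ\sigma_{M_1}$ contributes $k_j$-cycles rather than $2k_j$-cycles, that replacing the alternating product by the ordinary product is harmless for $\pm1$ weights so that Lemma \ref{signchange} applies verbatim to the superposition cycles, and---most importantly---that the planar embedding genuinely forbids a matching edge from crossing $C_j$. This last point is the only place the planarity hypothesis enters, and it is exactly what would fail for a general (non-planar) bipartite graph.
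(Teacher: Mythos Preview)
Your proof is correct and follows essentially the same route as the paper's: expand the determinant over bijections, superpose two matchings, and use Lemma~\ref{signchange} on each resulting cycle to see that the permutation sign change and the edge-sign change cancel. You are in fact more explicit than the paper in justifying why the number $\ell_j$ of enclosed vertices is even (the paper simply asserts ``since $\ell$ is even there''), and your observation that $M_1$ restricts to a perfect matching of the interior of $C_j$ is exactly the right reason.
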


In the example, the determinant is $-a-c-abc$.

\begin{proof}
If $K$ is not square the determinant is zero and there are no dimer coverings
(each dimer covers one white and one black vertex).
If $K$ is a square $n\times n$ matrix,
We expand 
\begin{equation}\label{detexpansion}
\det K=\sum_{\sigma\in S_n}\text{sgn}(\sigma)
K(\b_1,\w_{\sigma(1)})\dots K(\b_n,\w_{\sigma(1)}).
\end{equation}
Each term is zero unless it pairs each black vertex with a unique
neighboring white vertex. So there is one term for each dimer
covering, and the modulus of this term is the product of its edge weights.
We need only check that the signs of the nonzero terms are all equal.

Let us compare the signs of two different nonzero terms.
Given two dimer coverings, we can draw them simultaneously on $\G$.
We get a set of doubled edges and loops. To convert one
dimer covering into the other, we can take a loop and
move every second dimer (that is, dimer from the first covering)
cyclically around by one edge so that 
they match the dimers from the second covering. When we do this operation 
for a single loop of length $2k$, we are changing the permutation $\sigma$ by a 
$k$-cycle. Note that by Lemma \ref{signchange} above the sign change of the edge weights
in the corresponding term in 
(\ref{detexpansion}) is $\pm1$ depending on whether $2k$ is
$2\bmod 4$ or $0\bmod 4$ (since $\ell$ is even there), 
exactly the same sign change as occurs in
$\text{sgn}(\sigma)$. These two sign changes cancel, showing that
these two coverings (and hence any two coverings) have the same sign.
\end{proof}

A simpler proof of this theorem for honeycomb graphs---which avoids 
using Lemma \ref{signchange}---goes as follows: if two dimer coverings differ
only on a single face, that is, an operation of the type 
in Figure \ref{hexflip} converts one cover into the other,
then these coverings have the same sign in the expansion of the determinant, because the hexagon 
flip changes $\sigma$ by a $3$-cycle
which is an even permutation. 
\begin{figure}[htbp]
\center{\scalebox{1.0}{\includegraphics{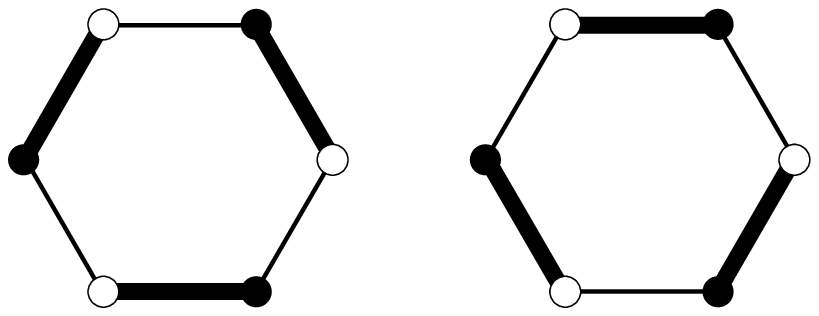}}}
\caption{\label{hexflip}}
\end{figure}
Thus it suffices to notice
that any two dimer coverings can be obtained
from one another by a sequence of 
hexagon flips. This can be seen using the lozenge tiling picture since
applying a hexagon flip is equivalent to adding or subtracting a cube
from the stepped surface. Any two surfaces
with the same connected boundary can be obtained from one another by adding and/or subtracting cubes.

While there is a version of Theorem \ref{Z} (using Pfaffians) for non-bipartite
planar graphs, there is no corresponding sign trick for nonplanar graphs 
in general (the exact condition is that a graph has a Kasteleyn
weighting if and only if it does not have $K_{3,3}$ as minor \cite{LP}).

\subsection{Local statistics}

There is an important corollary to Theorem \ref{Z}:

\begin{cor}[\cite{Kenyon.localstats}]\label{localstatcor}
Given a set of edges $X=\{\w_1\b_1,\dots,\w_k\b_k\}$,
the probability that all edges in $X$ occur in a dimer cover is
$$\left(\prod_{i=1}^k K(\b_i,\w_i)\right)\det (K^{-1}(\w_i,\b_j))_{1\leq i,j\leq k}.$$
\end{cor}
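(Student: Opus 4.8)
The strategy is to express the probability of a set of edges via a ratio of partition functions: if $X = \{\w_1\b_1,\dots,\w_k\b_k\}$ is a set of disjoint edges, then the probability that all of them appear in a random dimer cover equals $\left(\prod_i w(\w_i\b_i)\right) Z(\G\setminus X) / Z(\G)$, where $\G \setminus X$ is the graph with the vertices $\w_1,\b_1,\dots,\w_k,\b_k$ (and their incident edges) deleted. This is immediate from the definition of the Boltzmann measure: summing $\mu(M)$ over all $M$ containing $X$, each such $M$ restricts to a dimer cover of $\G\setminus X$, and $\prod_{e\in M} w(e) = \left(\prod_i w(\w_i\b_i)\right)\prod_{e\in M'} w(e)$ where $M' = M\setminus X$.

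The next step is to identify $Z(\G\setminus X)$ in terms of $K$. By Theorem \ref{Z}, $Z(\G\setminus X) = |\det K'|$ where $K'$ is the Kasteleyn matrix of $\G\setminus X$ — but crucially, deleting the rows indexed by $\b_1,\dots,\b_k$ and the columns indexed by $\w_1,\dots,\w_k$ from $K$ gives exactly a Kasteleyn matrix for $\G\setminus X$ (the sign convention on the remaining edges is inherited, and the Kasteleyn condition on the faces of the subgraph still holds because the proof of Theorem \ref{Z} only needs that any two coverings of $\G\setminus X$ differ by loops, which still pass through an appropriate parity of enclosed vertices — or more simply, one just reruns the argument). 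So $Z(\G\setminus X) = |\det K_{\hat B, \hat W}|$, the minor of $K$ with rows $\b_1,\dots,\b_k$ and columns $\w_1,\dots,\w_k$ removed.

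Now I invoke the Jacobi complementary-minor identity (the Desnanot–Jacobi / Jacobi formula for minors of the inverse): for an invertible $n\times n$ matrix $K$,
\begin{equation*}
\det\bigl(K^{-1}(\w_i,\b_j)\bigr)_{1\le i,j\le k} = \pm\frac{\det K_{\hat B,\hat W}}{\det K},
\end{equation*}
where the sign depends on the positions of the removed rows and columns, being $(-1)^{\sum(\text{row indices}) + \sum(\text{column indices})}$. Combining the three displayed relations, the probability is $\left(\prod_i w(\w_i\b_i)\right)\cdot |\det K_{\hat B,\hat W}| / |\det K|$, and feeding in the Jacobi identity this becomes $\left(\prod_i \pm w(\w_i\b_i)\right)\det\bigl(K^{-1}(\w_i,\b_j)\bigr)$ up to an overall sign. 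Since $K(\b_i,\w_i) = \pm w(\w_i\b_i)$ by definition of the Kasteleyn matrix, absorbing the edge signs into $K(\b_i,\w_i)$ is exactly what converts the $w(\w_i\b_i)$ prefactor into $\prod_i K(\b_i,\w_i)$.

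**Main obstacle.** The real bookkeeping is the sign. One must check that the sign $(-1)^{\sum \text{rows} + \sum \text{cols}}$ coming from the Jacobi minor identity, times the product of Kasteleyn signs $\prod_i \mathrm{sgn}\,K(\b_i,\w_i)$ relative to the product of $w(\w_i\b_i)$, times whatever sign relates $\det K$ to $|\det K|$, all combine to $+1$ so that the final expression is genuinely a probability (in particular real and nonnegative). The cleanest way to handle this is to note that the identity is an algebraic identity in the entries of $K$, so it suffices to verify it does not depend on the labeling: permuting the rows so that $\b_1,\dots,\b_k$ come first and the columns so that $\w_1,\dots,\w_k$ come first changes $\det K$ by a sign $\varepsilon$, changes $\det K_{\hat B,\hat W}$ by the same sign $\varepsilon$ (the same transpositions act on the complementary minor), and arranges the Jacobi identity in its standard "leading principal minor" form where the sign is $+1$; the signs $\varepsilon$ then cancel in the ratio. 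This reduces everything to the standard fact that for the leading $k\times k$ block, $\det(K^{-1})_{[k],[k]} = \det K_{\hat{[k]},\hat{[k]}}/\det K$ with no sign, which is the classical formula for the inverse via the adjugate applied blockwise (or a one-line consequence of Schur complements). Once the sign is pinned down, the rest is routine.
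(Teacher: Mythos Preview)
Your approach is exactly the paper's: the paper's entire proof is the one-line remark that a minor of $K$ equals $\det K$ times the complementary minor of $K^{-1}$ (Jacobi's lemma), and you have correctly unpacked this, including the reduction $\Pr(X\subset M)=\bigl(\prod_i w(\w_i\b_i)\bigr)Z(\G\setminus X)/Z(\G)$ and the identification $Z(\G\setminus X)=|\det K_{\hat B,\hat W}|$. One small slip in your sign bookkeeping: under the shuffle permutation sending $\b_1,\dots,\b_k$ (resp.\ $\w_1,\dots,\w_k$) to the first $k$ positions while preserving the relative order of the complementary rows and columns, $\det K_{\hat B,\hat W}$ does \emph{not} change (its rows and columns keep their order), whereas $\det K$ picks up the sign $(-1)^{\Sigma r+\Sigma c}$ --- so the ratio, not each factor, is what absorbs the Jacobi sign; this is consistent with the leading-principal-minor form you invoke and the final identity is unaffected.
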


The proof uses the 
Jacobi Lemma that says that a minor of a matrix $A$ is $\det A$
times the complementary minor of $A^{-1}$. 

The advantage of this result is that the probability
of a set of $k$ edges being present is only a $k\times k$ determinant, independently
of the size of the graph. One needs only be able to compute $K^{-1}$.
In fact the corollary is valid even for infinite graphs,
once $K^{-1}$ has been appropriately defined. 

Corollary
\ref{localstatcor} shows that the edges form a {\bf determinantal
(point) process}. Such a process is defined by the fact that
the probability of a set of $k$ ``points'' $p_1,\dots,p_k$
is a determinant of a $k\times k$
matrix $M$ with entries $M(p_i,p_j)$. Here the points in the process
are edges of $\G$ and $M(p_i,p_j)=K(\b_i,\w_i)K^{-1}(\w_i,\b_j)$
where $\w_i$ is the white vertex of edge $p_i$ and $\b_j$ is the black
vertex of edge $p_j$. See \cite{Sosh} for an introduction to
determinantal processes.

\begin{exer}
Without using Kasteleyn theory, compute the number of 
domino tilings of a $2\times n$ and $3\times n$ rectangle.
\end{exer}

\begin{exer}\label{bppexer}
A classical combinatorial result of Macmahon says that the number
of lozenge tilings of an $A\times B\times C$ hexagon (that is,
hexagon with edges $A,B,C,A,B,C$ in cyclic order) is
$$\prod_{i=1}^A\prod_{j=1}^B\prod_{k=1}^C \frac{i+j+k-1}{i+j+k-2}.$$
What is the probability that, in a uniform random tiling,
there are two lozenges adjacent to a chosen corner of an
$n\times n\times n$ hexagon\footnote{In fact a more general formula holds:
If we weight configurations with $q^{\mathrm{volume}}$, then 
$$Z=\prod_{i=1}^A\prod_{j=1}^B\prod_{k=1}^C \frac{1-q^{i+j+k-1}}{1-q^{i+j+k-2}}.$$}?
\end{exer}

\begin{exer}
Show that a planar bipartite graph has a Kasteleyn weighting
and prove Lemma \ref{signchange}.
\end{exer}

\begin{exer}
Verify Corollary \ref{localstatcor} for the example in Figure \ref{3X1}.
\end{exer}

\section{Partition function}

\subsection{Rectangle}
Here is the simplest example. 
Assume $mn$ is even and
let $\G_{m,n}$ be the $m\times n$ square grid.
Its vertices are $V=\{1,2,\dots,m\}\times\{1,2,\dots, n\}$
and edges connect nearest neighbors.
Let $Z_{m,n}$ be the partition function for dimers with
edge weights $1$. This is just the number of dimer coverings of
$\G_{m,n}$.

A Kasteleyn weighting is obtained by putting weight $1$
on horizontal edges and $i=\sqrt{-1}$ on vertical edges.
Since each face has four edges, the condition in
(\ref{Kweighting}) is satisfied.\footnote{A weighting gauge equivalent to this one,
and using only weights $\pm1$, is
to weight alternate columns of vertical edges by $-1$ and
all other edge $+1$. This was the weighting originally used by Kasteleyn \cite{Kast}; our current weighting (introduced by Percus \cite{Percus})
is slightly easier for our purposes.}

The corresponding Kasteleyn matrix 
$K$ is an $mn/2\times mn/2$ matrix 
(recall that $K$ is a $|W|\times|B|$ matrix). The eigenvalues of the matrix
$\tilde K=\left(\begin{matrix}0&K\\K^t&0\end{matrix}\right)$
are in fact simpler to compute. Let $z=e^{i\pi j/(m+1)}$
and $w=e^{i\pi k/(n+1)}$.
Then the function 
$$f_{j,k}(x,y)=(z^x-z^{-x})(w^y-w^{-y})=-4\sin(\frac{\pi jx}{m+1})
\sin(\frac{\pi ky}{n+1})$$
is an eigenvector of $\tilde K$ with eigenvalue
$z+\frac1z+i(w+\frac1w)$. 
To see this, check that
$$\lambda f(x,y)=f(x+1,y)+f(x-1,y)+if(x,y+1)+if(x,y-1)$$
when $(x,y)$ is not on the boundary of $\G$, and also true when
$f$ is on the boundary assuming we extend $f$ to be zero 
just outside the boundary, i.e. when $x=0$ or $y=0$ or $x=m+1$
or $y=n+1$.

As $j,k$ vary in $(1,m)\times(1,n)$ the eigenfunctions $f_{j,k}$
are independent (a well-known fact from Fourier series).
Therefore we have a complete diagonalization of the matrix $\tilde K$,
leading to 
\begin{equation}\label{Zproduct}
Z_{m,n} =\left(\prod_{j=1}^m\prod_{k=1}^n 2\cos\frac{\pi j}{m+1}+2i
\cos\frac{\pi k}{n+1}\right)^{1/2}.
\end{equation}
Here the square root comes from the fact that 
$\det \tilde K=(\det K)^2.$

Note that if $m,n$ are both odd then this expression is zero
because of the term $j=(m+1)/2$ and $k=(n+1)/2$ in
(\ref{Zproduct}).

For example $Z_{8,8}=12988816$.
For large $m,n$ we have
$$\lim_{m,n\to\infty}\frac{1}{mn}\log Z_{m,n}=\frac{1}{2\pi^2}\int_0^\pi\int_0^\pi
\log(2\cos\theta+2i\cos\phi)d\theta\, d\phi$$
which can be shown to be equal to $G/\pi$,
where $G$ is Catalan's constant $G=1-\frac1{3^2}+\frac1{5^2}-\dots$.

\begin{exer}
Show this.
\end{exer}

\subsection{Torus}\label{Torus}

A graph on a torus does not in general have a Kasteleyn 
weighting. However we can still make a ``local'' Kasteleyn
matrix whose determinant can be used to count dimer covers.

Rather than show this in general, let us work out a detailed example.
Let $H_n$ be the honeycomb lattice on a torus,
as in Figure \ref{honeycombtorus}, which shows $H_3$. 
\begin{figure}[htbp]
\center{\scalebox{1.0}{\includegraphics{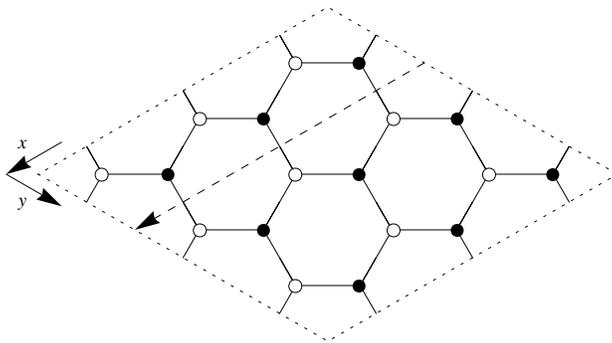}}}
\caption{\label{honeycombtorus}Fundamental domain for the
honeycomb graph on a torus.}
\end{figure}It has $n^2$ black vertices and
$n^2$ white vertices, and $3n^2$ edges.
Weight edges $a,b,c$
according to direction: $a$ the horizontal, $b$ the NW-SE edges
and $c$ the NE-SW edges. Let $\hat x$ and $\hat y$ be the directions
indicated, and $h_x$ and $h_y$ be the height changes along the paths winding around the torus in these directions, defined
using as base flow $\omega_a$ (the flow coming from the ``all-$a$'' dimer cover).
In particular, $h_x, h_y$ are the number of $b$-, respectively $c$-type edges crossed on a path 
winding around the torus in the $\hat x$, respectively $\hat y$
direction.
 
We have the following lemma.

\begin{lemma} The height change $(h_x,h_y)$ of a dimer covering
of $H_n$ is determined by the numbers $N_a,N_b,N_c$
of dimers of each
type $a,b,c$, as follows: $h_x=\frac{N_b}n$  and 
$h_y=\frac{N_c}n$. 
\end{lemma}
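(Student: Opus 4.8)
The plan is to compute the two height changes $h_x$ and $h_y$ directly from their definitions as net fluxes of the difference flow $\omega_M - \omega_a$ across paths winding around the torus, and to show that by a homology/averaging argument this net flux depends only on the total counts $N_b$ and $N_c$. Recall that $h_x$ (resp. $h_y$) is defined as the height change along a dual path winding once around the torus in the $\hat x$ (resp. $\hat y$) direction, computed with base flow $\omega_a$ (the flow supported on horizontal $a$-edges with value $1$, oriented white-to-black). For a dimer cover $M$, the flow $\omega_M - \omega_a$ assigns, on each edge, the difference of the dimer-indicator (oriented white-to-black) and the indicator of being a horizontal edge; crossing such an edge contributes its signed value to the height change. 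The point is that $\omega_a$ carries no flux across the $b$-type or $c$-type edges, so only the $\omega_M$ part contributes $b$- and $c$-crossings, while the $a$-contributions from $\omega_M$ and $\omega_a$ must be reconciled.

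First I would set up coordinates on the fundamental domain $H_n$ so that a path $\gamma_x$ winding in the $\hat x$ direction crosses exactly the $b$-type edges in one ``column'' and a path $\gamma_y$ winding in the $\hat y$ direction crosses exactly the $c$-type edges in one ``row'' — this is exactly the statement already made in the excerpt that $h_x$, $h_y$ count the number of $b$- resp. $c$-type edges crossed. Since the height change is a homological invariant (two homologous dual loops give the same period, as noted in the Heights subsection), I may replace the single loop $\gamma_x$ by the average of the $n$ parallel loops $\gamma_x^{(1)},\dots,\gamma_x^{(n)}$, one through each of the $n$ rows of the fundamental domain; each gives the same value $h_x$, so $h_x$ equals $\tfrac1n\sum_{i=1}^n (\text{number of }b\text{-edges of }M\text{ crossed by }\gamma_x^{(i)})$. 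But every $b$-type edge of $H_n$ is crossed by exactly one of these $n$ loops, so the sum is precisely $N_b$, giving $h_x = N_b/n$. The identical argument with the $n$ parallel loops in the $\hat y$ direction, each crossing a disjoint set of $c$-edges whose union is all $c$-edges, gives $h_y = N_c/n$.

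The one genuine point to verify carefully is that the ``height change'' contributed across a $b$-edge or $c$-edge by the flow $\omega_M - \omega_a$ is exactly the dimer indicator (i.e. $+1$ if the edge is in $M$, $0$ otherwise), with a consistent sign depending only on the edge type and not on the loop — this is where the choice of base flow $\omega_a$ matters, since $\omega_a$ vanishes on all non-horizontal edges, and one must also check that the contributions of horizontal $a$-edges to $\gamma_x$ and $\gamma_y$ cancel out (they should, since $\gamma_x$ and $\gamma_y$ can be chosen to cross no $a$-edges at all, or to cross them with canceling signs). I expect this bookkeeping of orientations and signs on the fundamental domain picture (Figure \ref{honeycombtorus}) to be the main obstacle; once the sign conventions are pinned down, the homology-averaging step is immediate. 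An alternative, perhaps cleaner, route would be to observe that the map from dimer covers to $(N_a, N_b, N_c)$ satisfies $N_a + N_b + N_c = n^2$ (total number of black vertices), so $(N_b, N_c)$ determines $N_a$ too, and then to check the formula on the three ``axis'' covers (all-$a$, all-$b$, all-$c$), which have $(h_x,h_y) = (0,0), (n,0), (0,n)$ respectively by the stated normalization, and extend by the fact that any two covers differ by a sequence of hexagon flips — each flip changes $(N_a,N_b,N_c)$ by $(\pm1,\mp1,0)$-type moves and changes $(h_x,h_y)$ correspondingly by a local computation. I would present the homology-averaging argument as the main proof since it is the most transparent.
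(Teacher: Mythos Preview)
Your main argument --- averaging over the $n$ translates of a winding dual loop and using homology invariance to equate each loop's $b$-edge count with $h_x$, then summing to get $nh_x = N_b$ --- is correct and is exactly the paper's proof.

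One remark on your proposed alternative: it does not work as stated. A hexagon flip on the honeycomb replaces one set of three alternating edges of a hexagonal face by the other; since the six edges of a hexagon are of types $a,b,c,a,b,c$ in cyclic order, each such triple contains exactly one edge of each type, so a flip \emph{preserves} $(N_a,N_b,N_c)$, not changes it by $(\pm1,\mp1,0)$. Moreover, on the torus hexagon flips preserve the homology class $(h_x,h_y)$, so the three axis covers lie in distinct flip-equivalence classes and you cannot propagate the identity from one to another. The averaging argument you give first is the right one.
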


\begin{proof}
We can compute the height change $h_x$ along any path winding around the torus in the $x$-direction. Take for example the dashed path 
in Figure \ref{honeycombtorus}. The height change $h_x$ is the number of
dimers crossing this path.
We get the same value $h_x$ for any of the $n$ translates 
of this path to other $y$ coordinates. Summing over all translates 
gives $n h_x=N_b$ and dividing by $n$ gives the result.
The same argument holds in the $y$-direction.
\end{proof}

Let $K_n$ be the weighted adjacency
matrix of $H_n$. That is $K(\b,\w)=0$ if there is no edge
from $\b$ to $\w$ and otherwise $K(\b,\w)=a,b$ or $c$
according to direction. 

{}From the proof of Theorem \ref{Z} we can see that
$\det K$ is a weighted, signed sum of dimer coverings.
Our next goal is to determine the signs.

\begin{lemma}
The sign of a dimer covering in $\det K$ depends only on its
height change $(h_x,h_y)$ modulo $2$. Three of the four parity
classes gives the same sign and the fourth has the opposite sign.
\end{lemma}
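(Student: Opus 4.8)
The plan is to fix the reference cover $M_0$ consisting of all $a$-type edges, so that $(h_x,h_y)=(0,0)$ there, to let $\varepsilon_0=\pm1$ be its sign in $\det K$, and to show that the sign of an arbitrary cover $M$ equals $\varepsilon_0$ times a factor depending only on $(h_x,h_y)\bmod 2$. The basic tool is the superposition argument from the proof of Theorem~\ref{Z}: $M\cup M_0$ is a disjoint union of doubled edges and loops, and sliding the dimers of $M$ around a loop of length $2\ell$ so as to turn $M$ into $M_0$ changes the matching permutation $\sigma$ by an $\ell$-cycle, hence changes the corresponding term in $\det K$ by $(-1)^{\ell-1}$. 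Thus
\[
\operatorname{sgn}(M)=\varepsilon_0\prod_{\gamma}(-1)^{\ell_\gamma-1},
\]
the product over the loops $\gamma$ of $M\triangle M_0$, where $2\ell_\gamma$ is the length of $\gamma$; the task is to read off the right-hand side from the homology classes of the $\gamma$.

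First I would dispose of contractible loops. A contractible loop $\gamma$ bounds a disk, and the vertices strictly inside it are matched to one another by $M$ (no $M$-edge can leave the disk, since the edges of $\gamma$ themselves alternate between $M$ and $M_0$, and a planar edge cannot cross $\gamma$), so their number is even. Since the honeycomb needs no Kasteleyn signs, Lemma~\ref{signchange} applied to $\gamma$ (all signs $+1$) then forces the half-length $\ell_\gamma$ to be odd, i.e.\ such a loop contributes $+1$. In particular a hexagon flip, which changes $\sigma$ by a $3$-cycle, never changes the sign; combined with the standard fact --- visible from the cube-stacking picture --- that any two covers of $H_n$ with the same height change are joined by a sequence of hexagon flips, this already shows $\operatorname{sgn}(M)$ depends only on $(h_x,h_y)$.

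Next I would reduce modulo $2$ and identify the four values. Inside $M_0$ there is a loop winding once around the torus in the $\hat x$-direction that alternately uses $a$-edges and $b$-edges, with $n$ of each; replacing its $a$-edges by its $b$-edges raises $h_x$ by $1$ and contributes the factor $(-1)^{n-1}$. Two disjoint parallel copies raise $h_x$ by $2$ and contribute $(-1)^{2(n-1)}=+1$, and likewise for $\hat y$ and $h_y$; whenever $(h_x{+}2,h_y)$ is realizable one has $h_x+h_y\le n-2$, so after a preliminary sequence of flips there is room to carry out this surgery inside a cover of height $(h_x,h_y)$. Hence the sign descends to a function $\psi\colon(\Z/2)^2\to\{\pm1\}$ with $\operatorname{sgn}(M)=\varepsilon_0\,\psi(h_x,h_y)$, which I then evaluate: $\psi(0,0)=1$; using one $\hat x$-winding loop, $\psi(1,0)=(-1)^{n-1}$, and symmetrically $\psi(0,1)=(-1)^{n-1}$; and using a single loop winding once in the $\hat x$- and once in the $\hat y$-direction, obtained by interleaving $n$ ``$a$ then $b$'' steps with $n$ ``$a$ then $c$'' steps (the interleaving keeps the curve embedded), of half-length $2n$, one gets $\psi(1,1)=(-1)^{2n-1}=-1$. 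So three of the four values coincide --- all equal $+1$ when $n$ is odd, the exceptional class being $(1,1)$; all equal $-1$ relative to $(0,0)$ when $n$ is even, the exceptional class being $(0,0)$ --- and the fourth is opposite, which is the assertion.

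The main obstacle is the geometric bookkeeping in the last two steps: exhibiting the explicit winding loops and checking they are embedded simple curves of the stated lengths, and --- more seriously --- justifying that one may maneuver from an arbitrary cover to one in which these surgeries can be performed (equivalently, that the parity of the number of non-contractible loops in $M\triangle M_0$, and their lengths modulo $4$, are controlled by $(h_x,h_y)\bmod 2$ alone). An alternative packaging proves once and for all that any loop of homology class $(p,q)$ occurring in a superposition has half-length $\ell\equiv(n{+}1)(p{+}q)+pq+1\pmod 2$ --- the contractible case $p=q=0$ being exactly the Lemma~\ref{signchange} computation above --- whence, since such loops in one superposition are all parallel and $m(m-1)$ is even, $\operatorname{sgn}(M)=\varepsilon_0(-1)^{(n+1)(h_x+h_y)+h_xh_y}$; but establishing that parity formula for non-contractible loops is where the real work sits.
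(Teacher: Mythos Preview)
Your proposal is correct, but it takes a longer road than the paper does. The paper exploits one structural fact about the specific reference cover $M_0$ that you do not use: in the superposition $M\cup M_0$, every non-doubled step is ``$a$ then $b$'' or ``$a$ then $c$'', so each loop is a monotone lattice walk in the $\hat x,\hat y$ directions. Hence \emph{every} loop is non-contractible, all loops are pairwise disjoint and therefore homologous, and their common primitive homology class $(h_y/q,h_x/q)$ and common half-length $\tfrac{n}{q}(h_x+h_y)$, with $q=\gcd(h_x,h_y)$ loops in all, are read off directly. Multiplying the $q$ factors $(-1)^{1+\frac{n}{q}(h_x+h_y)}$ gives $(-1)^{q+n(h_x+h_y)}$, and one checks this depends only on $(h_x,h_y)\bmod 2$ with the stated $3{:}1$ split.

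By contrast, you first invoke flip-connectivity on the torus within a fixed $(h_x,h_y)$ class --- a true but separate lemma --- to reduce to representative covers, and then build those representatives and the mod-$2$ surgeries by hand. This works, and your disposal of contractible loops via Lemma~\ref{signchange} is fine; but the paper's route sidesteps flip-connectivity entirely and yields the exact sign for \emph{every} $M$ in one stroke, because the monotone structure rules out contractible loops from the start. Your ``alternative packaging'' in the last paragraph is in fact very close to what the paper does, and carrying it out is easier than you suggest: once you notice that superposition-with-$M_0$ loops are monotone, the parity formula for their half-lengths is immediate and no residual ``real work'' remains.
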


\begin{proof} Let $N_b,N_c$ be the number of $b$ and $c$
type edges in a cover. 
If we take the union of a covering with the covering consisting of all 
$a$-type edges, we can compute the sign of the cover
by the product of the sign changes when shifting along each loop. The number of loops is $q=\text{GCF}(h_x,h_y)$ and each of
these has homology class $(h_y/q,h_x/q)$
(note that the $b$ edges contribute to $h_x$ but to the $(0,1)$
homology class). The length of each
loop is $\frac{2n}{q}(h_x+h_y)$ and so each loop contributes
sign $(-1)^{1+\frac{n}{q}(h_x+h_y)}$, for a total of 
$(-1)^{q+n(h_x+h_y)}$.
Note that $q$ is even if and only if $h_x$ and $h_y$ are both even.
So if $n$ is even then the sign is $-1$ unless $(h_x,h_y)\equiv (0,0)\bmod 2$. If $n$ is odd the sign is $+1$ unless 
$(h_x,h_y)\equiv (1,1)\bmod 2$.
\end{proof}

In particular $\det K$, when expanded as a polynomial in $a,b$
and $c$, has coefficients which count coverings 
with particular height changes. 
For example for $n$ odd we can write
$$\det K=\sum (-1)^{h_xh_y}C_{h_x,h_y}a^{n(n-h_x-h_y)}b^{nh_x}c^{nh_y}$$
since $h_xh_y$ is odd exactly when $h_x,h_y$ are both odd.

Define $Z_{00}=Z_{00}(a,b,c)$ to be this expression. 
Define 
$$Z_{10}(a,b,c)=Z_{00}(a,be^{\pi i/n},c),$$
$$Z_{01}(a,b,c)=Z_{00}(a,b,ce^{\pi i/n}),$$
and 
$$Z_{11}(a,b,c)=Z_{00}(a,be^{\pi i/n},ce^{\pi i/n}).$$

Then one can verify that, when $n$ is odd,
\begin{equation}\label{ZZZZ}
Z=\frac12(Z_{00}+Z_{10}+Z_{01}-Z_{11})
\end{equation}
which is equivalent to Kasteleyn's expression \cite{Kast} for the partition function. The case $n$ even is similar and left to the reader.

\subsection{Partition function}

Dealing with a torus makes computing the determinant much easier,
since the graph now has many translational symmetries.
The matrix $K$ commutes with translation operators and
so can be simultaneously diagonalized with them.
(In other words, we can use a Fourier basis.)
Simultaneous eigenfunctions of the horizontal and vertical
translations (and $K$) are the exponential functions
$f_{z,w}(x,y)=z^{-x} w^{-y}$ where $z^n=1=w^n$. 
There are $n$ choices for $z$ and $n$ for $w$ leading to a complete
set of eigenfunctions. The corresponding eigenvalue for $K$
is $a+bz+cw$.

In particular 
$$\det K =Z_{00}=\prod_{z^n=1}\prod_{w^n=1} a+bz+cw.$$

This leads to 
$$Z_{10}=\prod_{z^n=-1}\prod_{w^n=1} a+bz+cw.$$
$$Z_{01}=\prod_{z^n=1}\prod_{w^n=-1} a+bz+cw.$$
$$Z_{11}=\prod_{z^n=-1}\prod_{w^n=-1} a+bz+cw.$$

\subsection{Height change distribution}

{}From the expression for $Z_{00}$ one can try to estimate,
for $a,b,c=1$, the size of the various coefficients.
Assuming that $n$ is a multiple of $3$,
the largest coefficient $C_{h_x,h_y}$ occurs when $(h_x,h_y)=(n/3,n/3)$,
that is, $(N_a,N_b,N_c)=(\frac{n^2}3,\frac{n^2}3,\frac{n^2}3)$.
Boutillier and de Tili\'ere \cite{BdT} showed that, 
letting $C=C_{n/3,n/3}$ and $(h_x,h_y)=(j+\frac{n}3,k+\frac{n}3),$ 
$$C_{h_x,h_y}=Ce^{-c(j^2+jk+k^2)}(1+O(1/n))$$ where $c$ is a constant.
So for large $n$ the height change of a uniform random tiling 
on a torus
has a distribution converging to a discrete Gaussian distribution 
centered at $(n/3,n/3)$.

\begin{exer}
What signs should we put in (\ref{ZZZZ}) in the case $n$ is even?
\end{exer}

\begin{exer}
Compute the partition function for dimers on a square grid 
on a cylinder of circumference $4n+2$ and height $m$. Weight 
horizontal edges $a$ and vertical edges $b$.
\end{exer}

\section{Gibbs measures}

\subsection{Definition}
Let $X=X(\G)$ be the set of dimer coverings of a graph $\G$, possibly infinite,
with edge weight function $w$.
Recall the definition of the Boltzmann probability measure on $X(\G)$ 
when $\G$ is finite: a dimer
covering has probability proportional to the product of its edge weights.
When $\G$ is infinite, this definition will of course not work. 
For an infinite graph $\G$, a probability measure on $X$ is a {\bf Gibbs measure} 
if it is 
a weak limit of Boltzmann measures on a sequence of finite
subgraphs of $\G$ filling out $\G$. 
By this we mean, for any finite subset of $\G$, the probability of
any particular configuration occurring on this finite set 
converges. That is, the probabilities of {\bf cylinder sets} converge.
Here a cylinder set is a subset of $X(\G)$ consisting of all
coverings which have a particular configuration on a fixed set of vertices.
For example the set of coverings containing a particular edge is a cylinder set.

For a sequence of Boltzmann measures on increasing graphs, 
the limiting measure may not exist, but subsequential limits will 
always exist.
The limit may not be unique, however; that is,
it may depend on the approximating sequence of finite graphs.
This will be the case for dimers and it is this non-uniqueness which
makes the dimer model interesting.

The important property of Gibbs measures is the following.
Let $A$ be a cylinder set defined by the presence of a finite set of edges.
Let $B$ be another cylinder set defined by a different set of edges but
using the same set of vertices. Then, for the approximating Boltzmann measures, the ratio of the measures of 
these two cylinder sets is equal to the ratio of the product of the edge weights
in $A$ and the product of the edge weights in $B$. This is 
true along the finite growing sequence of graphs and so in particular
the same is true for the limiting Gibbs measures. 
In fact this property {\em characterizes} Gibbs measures: given a finite
set of vertices, the measure on this set of vertices conditioned
on the exterior (that is, integrating over the configuration in the exterior) is just the Boltzmann measure on this finite set.

\subsection{Periodic graphs}

We will be interested in the case when $\G$ is a {\bf periodic 
bipartite planar graph}, that is,
a planar bipartite weighted graph on which translations in $\Z^2$ 
(or some other rank-$2$ lattice) act by
weight-preserving and color-preserving isomorphisms.
Here by {\bf color-preserving} isomorphisms,
we mean, isomorphism which maps white vertices to white and black vertices to black. 
Note for example that for the graph $\G=\Z^2$ with nearest neighbor edges,
the lattice generated by $(2,0)$ and $(1,1)$ acts by color-preserving
isomorphisms, but $\Z^2$ itself does not act by color-preserving
isomorphisms. So the fundamental domain 
contains two vertices, one white and one black. 

For simplicity we will assume our periodic graphs are embedded so that
the lattice of weight- and color-preserving isomorphisms is $\Z^2$, 
so that we can describe a translation using a pair of integers.

\subsection{Ergodic Gibbs measures}

For a periodic graph $\G$, a {\bf translation-invariant measure} 
on $X(\G)$ is simply
one for which the measure
of a subset of $X(\G)$ in invariant under the translation-isomorphism action. 

The {\bf slope} $(s,t)$ of a translation-invariant measure is
the expected height change in the $(1,0)$ and $(0,1)$ directions,
that is, $s$ is the expected height change between a face and its translate by $(1,0)$
and $t$ is the expected height change between a face and its translate by $(0,1).$
 
A {\bf ergodic} Gibbs measure, or {\bf EGM}, is one in which 
translation-invariant sets have measure $0$ or $1$.
Typical examples of translation-invariant sets are: the set of coverings
which contain a translate of a particular pattern.

\begin{theorem}[Sheffield \cite{Sheff}]\label{Sheffthm}
For the dimer model on a periodic planar bipartite periodically edge-weighted graph,
for each slope $(s,t)$ for which there exists a translation-invariant measure,
there exists a unique EGM $\mu_{s,t}$. Moreover every EGM is of this type for some $s,t$.
\end{theorem}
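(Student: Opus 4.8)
The plan is to follow Sheffield's strategy in \cite{Sheff}, which proceeds in three stages: existence of an ergodic Gibbs measure of a prescribed slope, a variational/entropy characterization, and a uniqueness argument via a coupling (or ``swapping'') technique. First I would address \emph{existence}. Fix a slope $(s,t)$ in the interior of the Newton polygon of allowed slopes (the boundary cases, corresponding to frozen phases, are handled separately and are easier, since they are supported on a single periodic tiling or a finite-dimensional family). Build Boltzmann measures on large tori $\Z^2/n\Z^2$ (or rectangular boxes with carefully chosen boundary height) conditioned, or tilted via the height-change weighting as in Section~\ref{Torus}, so that the expected slope is $(s,t)$. By compactness of the space of probability measures on $X(\G)$ in the weak topology on cylinder sets, extract a subsequential limit $\mu$. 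Averaging over the $\Z^2$-action (which is harmless for Gibbs-ness and for the slope) makes $\mu$ translation-invariant; it is automatically Gibbs because the defining ratio-of-edge-weights property for cylinder sets on a common vertex set passes to the limit, as noted in the Gibbs-measure section. Finally, decompose $\mu$ into ergodic components; since slope is an affine function of the measure and is $\Z^2$-invariant, a.e.\ ergodic component has a well-defined slope, and one argues these slopes average to $(s,t)$ and in fact (by the uniqueness step) must all equal $(s,t)$.

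The second stage is the \emph{variational principle}. Define the specific entropy (entropy per fundamental domain) $\mathrm{ent}(\mu)$ of a translation-invariant measure $\mu$, using a subadditivity argument to see the limit exists. One shows that among translation-invariant Gibbs measures of a fixed slope, specific entropy plus specific energy (the edge-weight term) is maximized, and that any ergodic Gibbs measure of slope $(s,t)$ achieves this maximum; conversely any translation-invariant measure of that slope achieving the maximum is Gibbs. This uses the characterization of Gibbs measures already quoted: conditionally on the exterior, the measure is Boltzmann, so a local modification increasing entropy without changing the slope contradicts maximality unless the measure is already Gibbs. The key analytic input is strict concavity of the entropy functional $\sigma(s,t)$ as a function of the slope on the interior of the Newton polygon; this is where the bipartite/planar structure and the Kasteleyn determinant formula enter, since $\sigma$ can be computed explicitly (a Legendre dual of the Ronkin function of the characteristic polynomial $P(z,w)=\det K(z,w)$) and shown to be strictly concave there.

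The third and hardest stage is \emph{uniqueness}: if $\mu,\mu'$ are two ergodic Gibbs measures of the same slope $(s,t)$, then $\mu=\mu'$. The main obstacle is precisely here. The approach is a coupling argument: take the product measure $\mu\times\mu'$ on pairs of tilings, superimpose them, and analyze the resulting union, which (for two dimer covers of the same bipartite graph) consists of doubled edges and disjoint cycles, as in the proof of Theorem~\ref{Z}; since both covers have the same slope, the ``net displacement'' carried by these cycles is zero on average, so by an ergodic argument a.s.\ there are no infinite winding cycles carrying nonzero homology. One then shows that along a finite cycle in the superposition, the Gibbs property of \emph{both} measures forces equal conditional weights on the two ways of pairing up that cycle, so a cycle-swapping (``cluster-swapping'') operation preserves the joint law; an ergodicity/triviality-of-the-tail argument then collapses the coupling onto the diagonal, giving $\mu=\mu'$. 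The delicate points are controlling the geometry of the double-dimer cycles (showing they are a.s.\ finite, which is where zero drift of the height difference is used, via a maximal-inequality or mass-transport argument) and verifying that the swapping operation is measure-preserving on the infinite graph, which requires the finite-energy / insertion-tolerance consequences of the Gibbs property. The statement ``every EGM is of this type'' then follows because an arbitrary ergodic Gibbs measure has, by the ergodic theorem, an a.s.\ well-defined slope $(s,t)$, hence by uniqueness must coincide with the $\mu_{s,t}$ constructed in stage one.
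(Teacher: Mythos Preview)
The paper does not actually prove this theorem. Immediately after the statement it says only that existence ``is not hard to establish by taking limits of Boltzmann measures on larger and larger tori with restricted height changes $(h_x,h_y)$'' and that ``uniqueness is much harder; we won't discuss this here.'' The subsequent section ``Constructing EGMs'' fleshes out existence a bit more (conditioning the Boltzmann measure on $H_n$ on $(h_x,h_y)=(\lfloor ns\rfloor,\lfloor nt\rfloor)$ and taking a subsequential limit), and then deduces ergodicity \emph{from} the unproved uniqueness via the extreme-point remark. That is the full extent of the paper's argument.

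Your existence stage matches this sketch closely, and your stages two and three correctly outline Sheffield's actual proof in \cite{Sheff} (variational characterization via specific free energy, then the cluster-swapping coupling on the double-dimer superposition), which the paper deliberately omits. So you have gone well beyond the paper here, and in the right direction. Two small cautions on your write-up: in the paper's notation $\sigma$ is the surface tension and is \emph{convex} (see the Legendre-duality section), so the relevant property is strict convexity of $\sigma$, equivalently strict concavity of the entropy $-\sigma$; and your remark that boundary-slope measures are ``supported on a single periodic tiling or a finite-dimensional family'' is only literally true at the corners of the Newton polygon---along the edges the frozen measures are still nontrivial (see the exercise at the end of the Gibbs-measures chapter), though uniqueness there is indeed easier.
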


In particular we can classify EGMs by their slopes.

The existence is not hard to establish by taking limits of Boltzmann measures on
larger and larger tori
with restricted height changes $(h_x,h_y)$, see below. 
The uniqueness is much harder; we won't discuss this here.

\subsection{Constructing EGMs}

Going back to our torus $H_n$, note that the number of
$a,b,$ and $c$ type edges are multiples of $n$ and satisfy 
$$0\leq \frac{N_a}n,\frac{N_b}n,\frac{N_c}n\leq n$$
$$ \frac{N_a}n+\frac{N_b}n+\frac{N_c}n=n$$
and in fact it is not hard to see that every triple of integers in this triangle 
can occur. Recalling that the height changes were related to these by
$h_x=N_b/n, h_y=N_c/n$, and the average slope $(s,t)$ is defined
by $(h_x/n,h_y/n)$, we have that 
$(s,t)$ lies in the triangle 
$$\{(s,t)~:~0\leq s,t,s+t\leq 1\}.$$

We conclude from Theorem \ref{Sheffthm}
that there is a unique EGM for every $(s,t)$ in this triangle.
We denote this EGM $\mu_{s,t}$.

We can construct $\mu_{s,t}$ as follows.
On $X(H_n)$ restrict to configurations for which $(h_x,h_y)=
(\lfloor ns\rfloor, \lfloor nt\rfloor)$. Let $\mu^{(n)}_{s,t}$ be the
Boltzmann measure on $X(H_n)$ conditioned on this set.
Any limit of the $\mu^{(n)}_{s,t}$ as $n\to\infty$
is a translation-invariant Gibbs measure of slope $(s,t)$. 
Ergodicity follows from uniqueness: the set of translation-invariant
Gibbs measures of slope $(s,t)$ is convex and its extreme points
are the ergodic ones; since there is a unique ergodic one this
convex set must be reduced to a point.

In practice, it is hard to apply this construction, since conditioning
is a tricky business. The next section gives an alternate construction.

\subsection{Magnetic field}

The weights $a,b,c$ are playing dual roles.
On the one hand they are variables in the partition function
whose exponents determine the height change of a covering.
On the other hand by putting positive real values in for
$a,b,c$ we {\bf reweight} the different coverings.
This reweighting has the property that it depends only
on $h_x,h_y$, that is, two configurations with the same
$h_x,h_y$ are reweighted by the same quantity.
As a consequence putting in weights $a,b,c$ has the effect
of changing the average value of $(h_x,h_y)$ for a random
dimer cover of $T_n$. However a random dimer cover of
a {\em planar} region is unaffected by this reweighting since the
Boltzmann measures are unchanged.

There is in fact a law of large numbers for covers of $T_n$:
as the torus gets large, the slope of a random tiling
(with edges weights $a,b,c$) 
is concentrating on a fixed value $(s,t)$, where
$s,t$ are functions of $a,b,c$. We'll see this below.

This reweighting is analogous to performing a simple random walk
in $1$ dimension using a biased coin. The drift of the random
walk is a function of the bias of the coin. In our case we can think of
$a,b,c$ as a bias which affects the average slope (which 
corresponds to the drift). 
 
We computed $Z_n(a,b,c)$ above; one can find $(s,t)$
as a function of $(a,b,c)$ from this formula. It is easier to use the asymptotic expression for
$Z$ which we compute in the next section.

\begin{exer}
For $(s,t)$ on the boundary of the triangle of possible slopes,
describe the corresponding measure $\mu_{s,t}$. (Hint:
try the corners first.)
\end{exer}

\section{Uniform honeycomb dimers}

Recall the expressions
$$Z_{00}=\prod_{z^n=1}\prod_{w^n=1} a+bz+cw,$$
$$Z_{10}=\prod_{z^n=-1}\prod_{w^n=1} a+bz+cw,$$
$$Z_{01}=\prod_{z^n=1}\prod_{w^n=-1} a+bz+cw,$$
$$Z_{11}=\prod_{z^n=-1}\prod_{w^n=-1} a+bz+cw,$$
and (for $n$ odd)
$$Z=\frac12(Z_{00}+Z_{10}+Z_{01}-Z_{11}).$$

When $n$ is large, we can estimate these quantities
using integrals. Indeed, the logs of the right-hand sides
are different Riemann integrals for $\log(a+bz+cw)$.
Note that
$$\max_{\sigma,\tau}Z_{\sigma\tau}\leq Z\leq 2\max_{\sigma,\tau}Z_{\sigma\tau},$$
so the $n$th root of $Z$ and the $n$th root of the maximum of the $Z_{\sigma\tau}$
have the same limits.

It requires a bit of work \cite{CKP} to show that
\begin{theorem}
$$\lim_{n\to\infty}\frac1{n^2}\log Z = \frac{1}{(2\pi i)^2}
\int_{S^1}\int_{S^1}\mathrm{Log}(a+bz+cw)\frac{dz}{z}\frac{dw}{w}.$$
\end{theorem}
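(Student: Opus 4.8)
The plan is to show that $\frac{1}{n^2}\log Z_{\sigma\tau}$ converges to the double integral $I := \frac{1}{(2\pi i)^2}\int_{S^1}\int_{S^1}\mathrm{Log}(a+bz+cw)\,\frac{dz}{z}\,\frac{dw}{w}$ for each of the four choices of $(\sigma,\tau)\in\{0,1\}^2$, and then to invoke the sandwich $\max_{\sigma\tau}Z_{\sigma\tau}\le Z\le 2\max_{\sigma\tau}Z_{\sigma\tau}$ stated in the excerpt, which immediately gives $\frac{1}{n^2}\log Z\to I$ as well (the factor $2$ contributes $O(1/n^2)$ after taking logs). So everything reduces to a single Riemann-sum statement: if $\zeta_n$ ranges over the $n$-th roots of $\pm1$ and $\omega_n$ over the $n$-th roots of $\pm1$, then
$$\frac{1}{n^2}\sum_{\zeta_n}\sum_{\omega_n}\log|a+b\zeta_n+c\omega_n| \longrightarrow I.$$
First I would observe that the points $(\zeta_n,\omega_n)$ are exactly an $n\times n$ grid of equally spaced points on the torus $S^1\times S^1$ (shifted by a half-step when $\sigma$ or $\tau$ equals $1$), so the sum is literally a Riemann sum for $I$ with respect to the Haar measure $\frac{1}{(2\pi i)^2}\frac{dz}{z}\frac{dw}{w}$.

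The one and only obstacle is that the integrand $\log|a+bz+cw|$ is \emph{not continuous}: it has logarithmic singularities where $a+bz+cw=0$. For $a,b,c>0$ with the triangle inequality satisfied, the zero set of $a+bz+cw$ on the torus is either empty (when one weight dominates, so the limit shape is frozen and the argument is trivial since $\log$ is then bounded and continuous) or a single point, or a small real-analytic curve — in any case a set of measure zero — but the Riemann sum could conceivably pick up a grid point very close to it and blow up. So the real content is a \emph{uniform integrability / no-bad-clustering} estimate. The key step I would carry out is: (i) the integral $I$ is finite, because $\log|a+bz+cw|$ is locally $L^1$ near its logarithmic singularities (the singularity is integrable in two dimensions, indeed in one); and (ii) the Riemann sums converge to it despite the singularity, which one proves by splitting each sum into a "good" part, over grid points at distance $\ge \delta$ from the zero set $\{a+bz+cw=0\}$, where the integrand is uniformly continuous and standard Riemann-sum convergence applies, and a "bad" part near the zero set. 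For the bad part one needs a lower bound of the form $|a+b\zeta_n+c\omega_n|\ge c'/n$ (or at worst $c'\cdot n^{-C}$) for \emph{all} grid points — a Diophantine statement saying no root of $a+b\zeta_n+c\omega_n$ gets too close to a grid point too fast — together with a count that only $O(n)$ grid points lie within distance $\delta$ of the (one-dimensional) zero curve; then the bad part contributes at most $O(n)\cdot O(\log n)/n^2 = o(1)$. Letting $\delta\to0$ after $n\to\infty$ closes the estimate.

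Concretely, the Diophantine lower bound is the technical heart and is where I would expect to spend real effort (this is the "bit of work" the excerpt attributes to \cite{CKP}). One approach: factor $a+b\zeta+c\omega$; for fixed $\zeta=\zeta_n$ an $n$-th root of $\pm1$, the quantity $a+b\zeta+c\omega$, as $\omega$ ranges over $n$-th roots of $\pm1$, vanishes for at most one value of $\omega$ (a single point on a circle), and $\prod_{\omega^n=\pm1}(a+b\zeta + c\omega) = (a+b\zeta)^n \mp c^n$ up to a unit, whose modulus is bounded below by $\big||a+b\zeta|^n - c^n\big|$; combining this product formula over all $\zeta$ with a careful case analysis of how close $|a+b\zeta|$ can come to $c$ yields the needed polynomial-in-$n$ lower bound on individual factors. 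An alternative, cleaner route is to avoid pointwise bounds entirely: show directly that $\mathbb{E}_n[\log|a+b\zeta_n+c\omega_n|]$ — the average over the grid — converges to $I$ by a uniform-integrability argument, using that $z\mapsto \log|a+b z + c\omega|$ as a function on $S^1$ has, for each fixed $\omega$, an explicit antiderivative-type control (it is the log-modulus of a degree-one polynomial, so one can integrate it in one variable exactly and reduce the two-dimensional Riemann sum to a one-dimensional one plus an error). I would present the argument in the split-sum form above, stating the singularity estimate as the main lemma and citing \cite{CKP} for the routine-but-lengthy verification of the $n^{-C}$ separation bound, consistent with the excerpt's stated intention not to give complete proofs.
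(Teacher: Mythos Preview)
Your overall framework (the sandwich $\max_{\sigma\tau}Z_{\sigma\tau}\le Z\le 2\max_{\sigma\tau}Z_{\sigma\tau}$, followed by a Riemann-sum argument for the $Z_{\sigma\tau}$) matches the paper. But there is a genuine gap: your plan is to prove that $\frac{1}{n^2}\log Z_{\sigma\tau}\to I$ for \emph{each} of the four pairs $(\sigma,\tau)$, via a Diophantine lower bound $|a+b\zeta_n+c\omega_n|\ge c' n^{-C}$ valid at every grid point. That lower bound is simply false in general, and so is the conclusion. Take $a=b=c=1$ and $n$ divisible by $3$: the singularity $(z_0,w_0)=(e^{2\pi i/3},e^{4\pi i/3})$ is then an exact $n$th root of unity in each coordinate, so $Z_{00}=0$ and $\frac1{n^2}\log Z_{00}=-\infty$. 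No amount of ``routine-but-lengthy verification'' will produce the separation bound you want, and \cite{CKP} does not claim it.

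The paper's argument is different precisely here. It concedes that one of the four Riemann sums may be ruined by a grid point landing near a singularity, but observes that the four grids are half-step staggered from one another, so a given point on the torus can be within a quarter-spacing of at most \emph{one} of the four grids. Moreover the two singularities $(z_0,w_0)$ and $(\bar z_0,\bar w_0)$ are complex conjugates, and complex conjugation maps each of the four grids to itself (if $\zeta^n=(-1)^\sigma$ then $\bar\zeta^n=(-1)^\sigma$); hence both singularities threaten the \emph{same} grid. So at least three of the four $Z_{\sigma\tau}$ are genuine Riemann sums with integrand bounded below by $-O(\log n)$ on $O(1)$ points and bounded everywhere above, and these three do converge to $I$. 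Since a nearby singularity can only make $\log|a+bz+cw|$ more negative, the possibly bad $Z_{\sigma\tau}$ is too small rather than too large, so $\max_{\sigma\tau}Z_{\sigma\tau}$ is always one of the good ones and the sandwich finishes the job.

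A smaller geometric correction: when the triangle inequality holds, the zero set of $a+bz+cw$ on $S^1\times S^1$ is not a curve but exactly two isolated points (the two orientations of the triangle with sides $a,b,c$; see the paper's footnote). This actually helps the counting---only $O((n\delta)^2)$ grid points lie within distance $\delta$---but it does not rescue the pointwise Diophantine bound.
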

Here $\text{Log}$ denotes the principal branch. By symmetry
under complex conjugation the integral is real.
This normalized logarithm of the partition function is called
the {\bf free energy} $F$.

The difficulty in this theorem is that the integrand has two 
singularities (assuming $a,b,c$ satisfy the triangle
inequality) and the Riemann sums may become very small
if a point falls near one of these singularities\footnote{To see where
the singularities
occur, make a triangle with edge lengths $a,b,c$; think about it
sitting in the complex plane with edges $a,bz,cw$ where 
$|z|=|w|=1$. The two  possible orientations of triangle give
complex conjugate solutions $(z,w),(\bar z,\bar w).$}.  However
since the four integrals are Riemann sums on staggered
lattices, at most one of the four can become small.
(and it will be come small at both singularities by
symmetry under complex conjugation.)
Therefore either three or all four of the Riemann sums are actually good approximations to the integral. This is enough to
show that $\log Z$, when normalized, converges to the integral.

This type of integral is well-studied. It is known at the 
Mahler measure of the polynomial $P(z,w)=a+bz+cw$. 
We'll evaluate it for general $a,b,c$ below.
Note that when $a\ge b+c$ the integral can be evaluated 
quickly by residues and gives $F=\log a$.  Similarly
when $b>a+c$ or $c>a+b$ we get $F=\log b$ or $F=\log c$ respectively.

\subsection{Inverse Kasteleyn matrix}\label{IKM}

To compute probabilities of certain edges occurring in a dimer covering of a torus, one needs a linear combination of minors of
four inverse Kasteleyn matrices. 
Again there are some
difficulties due to the presence of zeros in the integrand, 
see \cite{KOS}.
In the limit $n\to\infty$, however, there is a simple expression
for the multiple edge probabilities. It is identical to the
statement in Corollary \ref{localstatcor}, except that we must
use the infinite matrix $K^{-1}$ defined by:
$$K^{-1}(\w_{0,0},\b_{x,y})=\frac1{(2\pi i)^2}\int \frac{z^{-y}w^{x}}{a+bz+cw}\frac{dz}{z}\frac{dw}{w}.$$
Here we are using special coordinates for the vertices;
$\w_{0,0}$ is the white vertex at the origin and
$\b_{x,y}$ corresponds to a black vertex at 
\newcommand{\e}{\text{e}}
$\e_1+x(\e_3-\e_1)+y(\e_1-\e_2)$, where $\e_1,\e_2,\e_3$
are the unit vectors in the directions of the three cube roots of $1$.

As an example, suppose that $a,b,c$ satisfy the triangle
inequality.
Let $\theta_a,\theta_b,\theta_c$ be the angles opposite
sides $a,b,c$ in a Euclidean triangle.
The probability of the $a$ edge $\w_{0,0}\b_{0,0}$
in a dimer covering of the 
honeycomb is
$$\Pr(\w_{0,0}\b_{0,0})=aK^{-1}(\w_{0,0},\b_{0,0})=
\frac{1}{4\pi^2}\int_{S^1}\int_{S^1} 
\frac{a}{a+bz+cw}\frac{dz}{iz}\frac{dw}{iw}.$$
Doing a contour integral over $w$ gives
$$\frac1{2\pi i}\int_{|a+bz|>c}\frac{a}{a+bz}\frac{dz}{z},$$
which simplifies to
$$=\left.\frac1{2\pi i}\log\left(\frac{bz}{a+bz}\right)\right|_{e^{-i(\pi-\theta_c)}}^{e^{i(\pi-\theta_c)}}$$
$$=\frac{\theta_a}{\pi}.$$

Note that this tends to $1$ or $0$ when the triangle degenerates,
that is, when one of $a,b,c$ exceeds the sum of the other two.
This indicates that the dimer covering becomes {\bf frozen}:
when $a\geq b+c$ only $a$-type edges are present.

\subsection{Decay of correlations}

One might suspect that dimers which are far apart are
uncorrelated: that is, the joint probability in a random dimer cover of two (or more)
edges which are far apart, is close to the product of their
individual probabilities. 
This is indeed the case (unless $s,t$ is on the boundary
of the triangle of allowed slopes), and the 
error, or correlation, defined by 
$\Pr(e_1\& e_2)-\Pr(e_1)\Pr(e_2)$, is an important
measure of how quickly information is lost with distance in the covering.
By Corollary \ref{localstatcor}, this correlation
is a constant times $K^{-1}(\b_1,\w_2)K^{-1}(\b_2,\w_1)$
if the two edges are $\b_1\w_1$ and $\b_2\w_2$.

The values of $K^{-1}(\w_{0,0},\b_{x,y})$ are Fourier
coefficients of $(a+bz+cw)^{-1}$. 
When $\b$ and $\w$ are far from each other, what can we say about
$K^{-1}(\w,\b)$?
The Fourier coefficients of an analytic function on the torus
$\{|z|=|w|=1\}$ decay exponentially
fast. However when $a,b,c$ satisfy the triangle inequality,
the function $(a+bz+cw)^{-1}$ is not analytic; it has two simple
poles (as we discussed earlier) on $\{|z|=|w|=1\}$. 

The size of the
Fourier coefficients is governed by the behavior 
at these poles, in fact the Fourier coefficients
decay linearly in $|x|+|y|$. 
As a consequence the correlation of distant edges 
decays {\em quadratically} in the distance between them.

This is an important observation which we use later as well:
the polynomial decay of correlations in the dimer model
is a consequence of the zeros of $a+bz+cw$ on the torus.
In more general situations we'll have a different polynomial
$P(z,w)$, and it will be important to find out where its zeros 
lie on the unit torus.

\subsection{Height fluctuations}

As in section \ref{IKM} above we can similarly compute, for $k\ne 0$,
$$K^{-1}(\w_{0,0},\b_{1,k})=-\frac{\sin(k\theta_b)}{\pi k b}.$$
By symmetry (or maybe there's an easy way to evaluate this directly, I don't know)
for $k\ne 0$,
$$K^{-1}(\w_{0,0},\b_{k,k})=-\frac{\sin(k\theta_a)}{\pi k a}.$$
Let $a_k=-\frac{\sin(k\theta_a)}{\pi k}$ for $k\ne 0$ and
$a_0=\theta_a/\pi$. 

Given any set of $k$ edges of type $a$ in the vertical
column passing through the edge $\w_{0,0}\b_{0,0}$,
say the edges $\w_{n_j,n_j}\b_{n_j,n_j}$ for $j=1,\dots k$,
by Corollary \ref{localstatcor}
the probability of these $k$ edges all occurring simultaneously is
$\det(a_{n_i-n_j})_{1\le i,j\le k}.$

So the presence of the edges in this column
forms a determinantal process with kernel $M_{i,j}=a_{i-j}$.
Determinantal processes \cite{Sosh} have the property that
the number of points in any region is a sum of independent 
(but not necessarily identically distributed) Bernoulli
random variables. The random variables have probabilities
which are the eigenvalues of the kernel $M$ restricted
to the domain in question. In particular if the variance in the number 
of points in an interval is large, 
this number is approximately Gaussian.

We can compute the variance in the number of points in an interval
of length $k$ as follows.
Let $M_k$ be the $k\times k$ matrix $M_{i,j}=a_{i-j}$ for $1\le i,j\le k$.
The sum of the variances of the Bernoullis is just the 
sum of $\lambda(1-\lambda)$ over the eigenvalues of $M_k$.
This is just $\text{Tr}(M_k(I_k-M_k))$ (where $I_k$ is the identity matrix).
With a bit of work (and the Fourier transform of the function
$f(\theta)=|\theta|(\pi-|\theta|)$ for $\theta\in[-\pi,\pi]$) one arrives at
\begin{equation}\label{logk}
\text{Tr}(M_k(I-M_k))=ka_0(1-a_0)-a_1^2(2k-2)-a_2^2(2k-4)-\dots
-a_{k-1}^2
\end{equation}
$$=\frac1{\pi^2}\log k + O(1).$$

We conclude that the variance in the height difference
between the face at the origin and the face at $(k,k)$,
that is, $k$ lattice spacings vertically away from the origin,
is proportional to the log of the distance
between the faces. In particular this allows us to
conclude that the height difference
between these points tends to a Gaussian when the points
are far apart. 

A similar argument gives the same height difference distribution
for any two faces at distance $k$ (up to lower order terms).

\begin{exer}
For the uniform measure $a=b=c=1$, compute the probability
that the face at the origin has three of its edges matched,
that is, looks like one of the two configurations in Figure 
\ref{hexflip}.
\end{exer}

\begin{exer}
Compute the constant term in the expression (\ref{logk}).
\end{exer}

\begin{exer}
How would you modify $M_k$ to compute the expected parity
of the height change from the face at the origin to the face
at $(k,k)$?
\end{exer}

\section{Legendre duality}

Above we computed the partition function for dimer coverings of the torus
with edge weights $a,b,c$. 
Here we would like to compute the number of dimer coverings of a torus
with uniform weights but with fixed slope $(s,t)$. 
Surprisingly, these computations are closely related.
Indeed, we saw that, with edge weights $a,b,c$, all coverings with
height change $h_x,h_y$ have the same weight $a^{n h_z}b^{n h_x}c^{n h_y}$ (we defined $h_z$ to be $h_z=n-h_x-h_y$).

So we just need to extract the coefficient $C_{h_x,h_y}$ in the expansion
of the partition function
$$Z_n(a,b,c)=\sum_{h_x,h_y}C_{h_x,h_y}a^{n h_z}b^{n h_x}c^{n h_y}.$$
This can be done as follows.
First, choose positive reals $a,b,c$ (if we can) so that the term
$C_{h_x,h_y}a^{n h_z}b^{n h_x}c^{n h_y}$ is the largest term. 
If we're lucky, this term and terms with nearby $h_x,h_y$ dominate
the sum, in the sense that all the other terms add up to a negligible amount
compared to these terms. In that case we can use the estimate
$$Z_n(a,b,c)\approx \sum_{|h_x/n-s|<\eps,|h_y/n-t|<\eps}C_{h_x,h_y}a^{n h_z}b^{n h_x}c^{n h_y}$$
and so $$\sum_{|h_x/n-s|<\eps,|h_y/n-t|<\eps}C_{h_x,h_y}\approx Z_n(a,b,c)a^{-n^2(1-s-t)}b^{-n^2s}c^{-n^2t}.$$

These estimates can in fact be made rigorous. One needs only check that
for fixed edge weights $a,b,c$, as $n\to\infty$ the height change concentrates on a fixed 
value $(s,t)$, that is, the {\bf variance} in the average slope $(s,t)$ tends to zero.

We can conclude that the growth rate (which we denote 
$-\sigma(s,t)$) of the number of stepped surfaces 
of fixed slope $(s,t)$ is 
\begin{eqnarray}\nonumber
-\sigma(s,t)&=&\lim_{n\to\infty}\frac1{n^2}\log C_{h_x,h_y}\\\nonumber
&=&\lim_{n\to\infty}\frac1{n^2}\left(\log Z_n - n^2(1-s-t)\log a-n^2 t\log b-n^2 s\log c\right)\\&=&\log Z-p_a\log a-p_b\log b-p_c\log c,
\label{Z-p}
\end{eqnarray}
where $(p_a,p_b,p_c)=(1-s-t,s,t)$ are the probabilities of the number of $a,b,c$-type edges, respectively. 

Here $-\sigma$ is the growth rate; $\sigma$ is called the {\bf surface
tension}, see below.

What we've done above is a standard operation, called 
{\bf Legendre duality}. 
Set $a=1$ and set $b=e^X,c=e^Y$.
Then our expression for the normalized logarithm of the partition
function is $$\log Z=\int_{|z|=|w|=1}\log(1+e^X z+e^Y w)\frac{dz}{2\pi iz}\frac{dw}{2\pi iw}.$$
We can rewrite this integral as
$$\log Z=R(X,Y)=\int_{|z|=e^X}\int_{|w|=e^Y}\log(1+z+w)\frac{dz}{2\pi iz}\frac{dw}{2\pi iw}.$$
Here $R(X,Y)$ is called the {\bf Ronkin function} of the polynomial $P(z,w)=1+z+w.$

We have shown in (\ref{Z-p}) 
above that the surface tension is the {\bf Legendre dual} of
$R$:
\begin{equation}\label{sigmaR}
-\sigma(s,t)=R(X,Y)-sX-tY,
\end{equation}
where $s=\frac{dR(X,Y)}{dX}$ and
$t=\frac{dR(X,Y)}{dY}.$
Both $R$ and $\sigma$ are convex functions; $R$ is defined for all $(X,Y)\in\R^2$
and $\sigma$ is only defined for $(s,t)$ in the triangle 
of allowed slopes $\{(s,t)~|~0\le s,t,1-s-t\le 1\}$.

Recall that $1-s-t,s,t$ are proportional to the angles of the triangle
with sides $a,b,c$. We have 
\begin{eqnarray}\nonumber
\frac{d\sigma}{ds}&=&X=\log b/a\\
\frac{d\sigma}{dt}&=&Y=\log c/a.\label{dsigma}
\end{eqnarray} 
{}From this we can easily find
\begin{theorem}\label{sigmaformula}
$$\sigma(s,t)=-\frac1{\pi}(L(\pi s)+L(\pi t)+L(\pi(1-s-t))),$$
where 
$$L(\theta)=-\int_0^\theta \log2\sin t\,dt$$
is the Lobachevsky function \cite{Milnor}.
\end{theorem}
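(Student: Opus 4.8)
The plan is to integrate the differential equations \eqref{dsigma} for $\sigma$, using the explicit expressions for $X=\log(b/a)$ and $Y=\log(c/a)$ as functions of $(s,t)$. Recall that $\pi s, \pi t, \pi(1-s-t)$ are precisely the angles $\theta_c, \theta_b, \theta_a$ opposite the sides $c, b, a$ in a Euclidean triangle with side lengths $a,b,c$. By the law of sines, $a/\sin\theta_a = b/\sin\theta_b = c/\sin\theta_c$, so that
$$\frac{b}{a}=\frac{\sin\theta_b}{\sin\theta_a}=\frac{\sin(\pi t)}{\sin(\pi(1-s-t))}, \qquad \frac{c}{a}=\frac{\sin\theta_c}{\sin\theta_a}=\frac{\sin(\pi s)}{\sin(\pi(1-s-t))}.$$
Thus \eqref{dsigma} becomes
$$\frac{\partial\sigma}{\partial s}=\log\sin(\pi t)-\log\sin(\pi(1-s-t)), \qquad \frac{\partial\sigma}{\partial t}=\log\sin(\pi s)-\log\sin(\pi(1-s-t)).$$

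Next I would verify that the proposed formula $\sigma(s,t)=-\tfrac1\pi\bigl(L(\pi s)+L(\pi t)+L(\pi(1-s-t))\bigr)$ satisfies these equations. Since $L'(\theta)=-\log(2\sin\theta)$, differentiating in $s$ gives
$$\frac{\partial\sigma}{\partial s}=-\frac1\pi\Bigl(\pi L'(\pi s)-\pi L'(\pi(1-s-t))\Bigr)=-L'(\pi s)+L'(\pi(1-s-t))=\log(2\sin(\pi s))-\log(2\sin(\pi(1-s-t))),$$
which, after the $\log 2$ terms cancel, is exactly the expression above — wait, this gives $\log\sin(\pi s)$ where we wanted $\log\sin(\pi t)$. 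So in fact the correct check is that $\partial\sigma/\partial s = \log\sin(\pi s)-\log\sin(\pi(1-s-t))$, and comparing with \eqref{dsigma} one sees that the roles of the variables in the statement of \eqref{dsigma} must be read consistently with the angle-labeling convention $(\pi s,\pi t,\pi(1-s-t))=(\theta_c,\theta_b,\theta_a)$; with that convention $s$ pairs with $Y=\log(c/a)$, and the two computations agree. Once the partials match, the two functions differ by a constant, and I would pin down the constant by evaluating at a symmetric point, e.g. $(s,t)=(1/3,1/3)$, where by symmetry one can compute $R(0,0)=\log Z(1,1,1)$ directly and compare; alternatively one checks the boundary behavior $\sigma\to 0$ as $(s,t)$ approaches a corner of the triangle (using $L(0)=L(\pi)=0$), which forces the additive constant to vanish.

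The main obstacle is the bookkeeping of which slope coordinate is Legendre-dual to which logarithmic weight, and correspondingly which angle equals $\pi s$ versus $\pi t$ — the excerpt's \eqref{dsigma} and the angle identifications must be reconciled, and a sign or index slip there propagates through everything. A secondary technical point is justifying the use of the law of sines identification of $(s,t)$ with triangle angles: this was asserted earlier (``$1-s-t,s,t$ are proportional to the angles of the triangle with sides $a,b,c$'') and follows from the contour-integral evaluation in Section \ref{IKM}, where $\Pr(\w_{0,0}\b_{0,0})=aK^{-1}(\w_{0,0},\b_{0,0})=\theta_a/\pi$ and $\Pr$ of an $a$-edge equals $p_a=1-s-t$; similarly for the $b$- and $c$-edges one gets $p_b=s=\theta_b/\pi$ wait — again the labeling — $p_b$ should match the angle opposite $b$, giving $s=\theta_b/\pi$ or $\theta_c/\pi$ depending on convention. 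I would fix one convention at the outset, namely $(p_a,p_b,p_c)=(1-s-t,s,t)$ and $(\theta_a,\theta_b,\theta_c)=(\pi(1-s-t),\pi s,\pi t)$, check consistency with \eqref{dsigma} by rewriting $\log(b/a)=\log(\sin\theta_b/\sin\theta_a)$, and then the differentiation of the Lobachevsky formula closes the argument. The remaining computation of the additive constant is then routine.
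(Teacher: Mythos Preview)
Your approach is exactly the paper's: the text simply says ``From this we can easily find'' before stating the theorem, meaning one is to integrate \eqref{dsigma} using the law-of-sines identification of $(s,t)$ with triangle angles, which is precisely your plan.

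Your only real difficulty is the bookkeeping, and your \emph{final} convention is the right one while your \emph{initial} one is not. From $(p_a,p_b,p_c)=(1-s-t,s,t)$ together with $p_a=\theta_a/\pi$ (Section~\ref{IKM}) and its analogues, one gets
\[
(\theta_a,\theta_b,\theta_c)=\bigl(\pi(1-s-t),\,\pi s,\,\pi t\bigr),
\]
not $(\pi s,\pi t)=(\theta_c,\theta_b)$. Then the law of sines gives $b/a=\sin(\pi s)/\sin(\pi(1-s-t))$ and $c/a=\sin(\pi t)/\sin(\pi(1-s-t))$, so \eqref{dsigma} reads
\[
\sigma_s=\log\sin(\pi s)-\log\sin(\pi(1-s-t)),\qquad
\sigma_t=\log\sin(\pi t)-\log\sin(\pi(1-s-t)),
\]
and your differentiation of the Lobachevsky expression matches these directly---no need to swap which slope is dual to which weight. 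The constant is fixed by your corner argument: $L(0)=0$ and $L(\pi)=-\int_0^\pi\log(2\sin t)\,dt=0$, while $\sigma\to 0$ at a corner since there is a unique configuration of that slope.
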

 
Combined with (\ref{sigmaR}) this gives an expression for $R$ in terms of $L$ as well.
See Figures \ref{sigma} and \ref{Ronkin} for plots of $\sigma$
and $R$. 
\begin{figure}[htbp]
\center{\scalebox{1.0}{\includegraphics{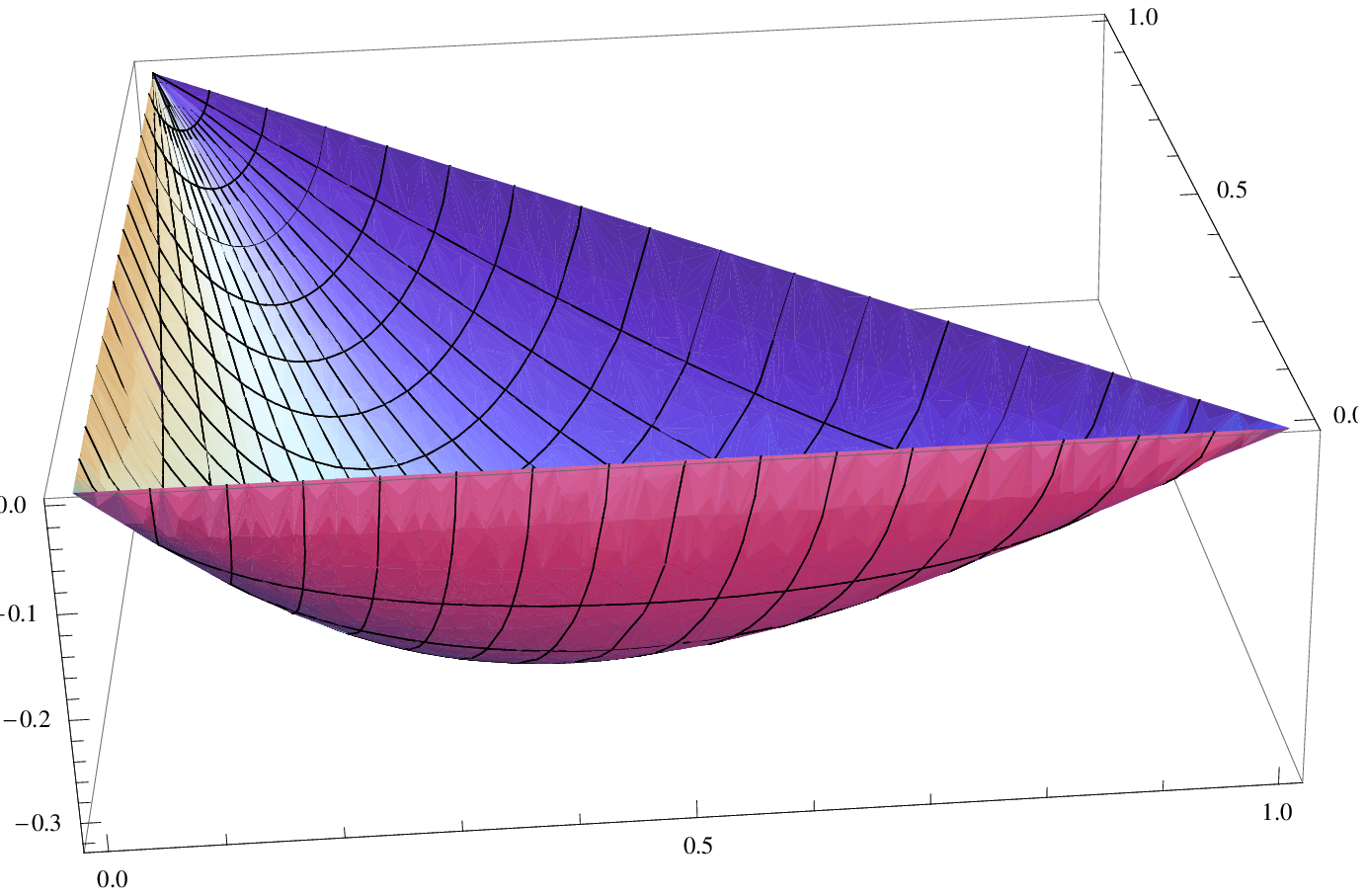}}}
\caption{\label{sigma}}
\end{figure}
\begin{figure}[htbp]
\center{\scalebox{1.0}{\includegraphics{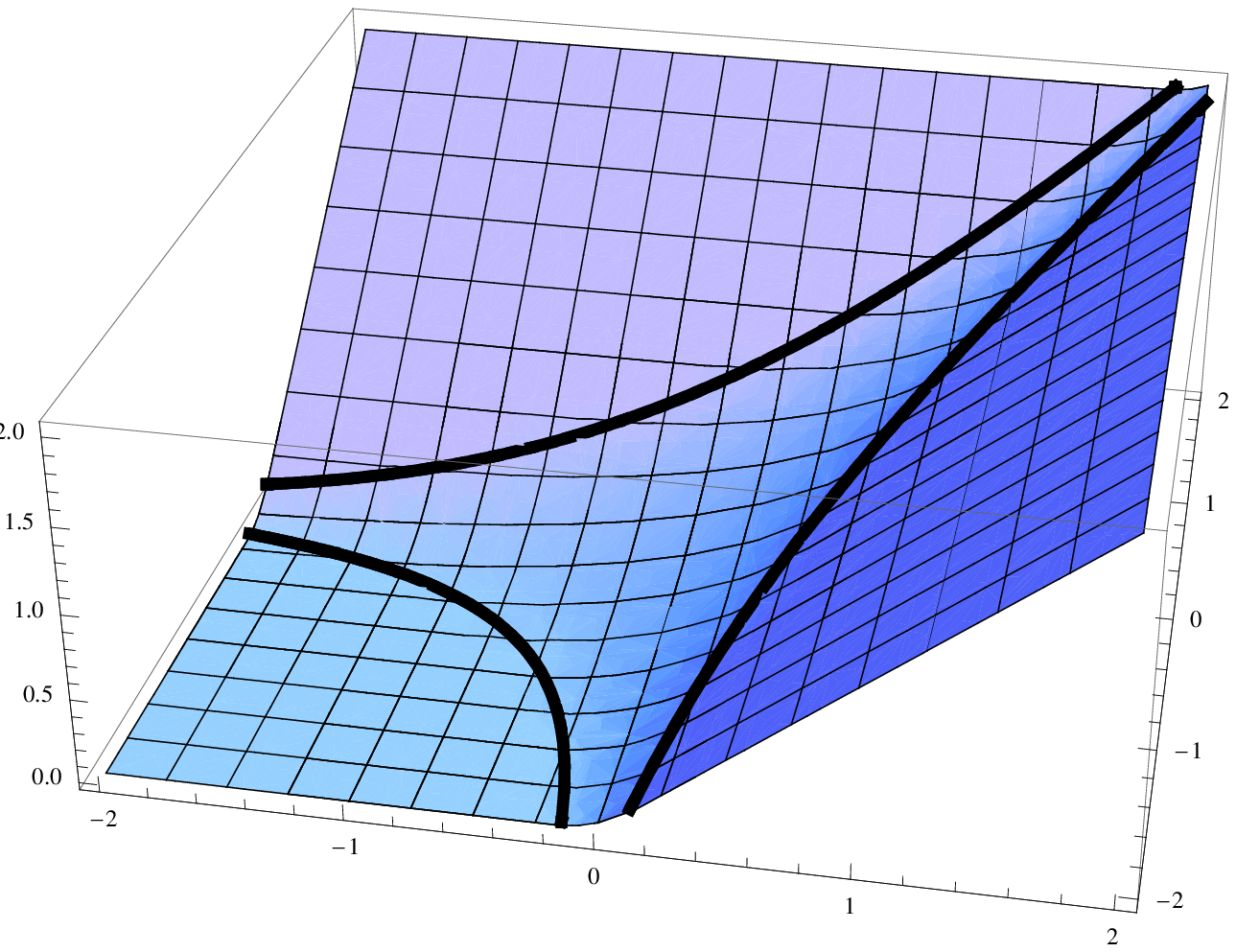}}}
\caption{\label{Ronkin}}
\end{figure}

\section{Boundary conditions}

As we can see in Figures \ref{lozengehexagon} 
and \ref{lozengeonside1}, boundary conditions have a large
influence on the shape of a random dimer configuration.

This influence is summed up in the following theorem.

\begin{theorem}[\bf{Limit shape theorem \cite{CKP}}]\label{CKP}
For each $\eps>0$ let $\gamma_\eps$ be a 
closed curve in $\eps\Z^3$
and which can be spanned by a stepped surface in
$\eps\Z^3$. Suppose the $\gamma_\eps$ converge 
as $\eps\to0$ to a closed curve $\gamma$. 
Then there is a surface $\Sigma_0$ 
spanning $\gamma$ with the following property.
For any $\delta>0$, with probability tending to $1$ as $\eps\to0$
a uniform  random stepped surface spanning $\gamma_\eps$
lies within $\delta$ of $\Sigma_0$. The surface $\Sigma_0$
is the graph of the unique function $h_0:P_{111}\to\R$
which minimizes
$$\min_h\int_U\sigma(\nabla h)dx\,dy,$$
where $$\sigma(s,t)=-\frac1{\pi}(L(\pi s)+L(\pi t)+L(\pi(1-s-t)))$$
and $U$ is the region enclosed by the projection of $\gamma$.
\end{theorem}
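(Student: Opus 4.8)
The plan is to establish the limit shape theorem in three stages: a large deviations principle for the height function at the level of the torus (already essentially recorded via the surface tension $\sigma$), an upgrade to general simply connected regions with prescribed boundary conditions, and finally the identification of the concentration point with the solution of the variational problem. First I would set up the combinatorial counting: partition the region $U$ into small boxes of side $\eps^{1/2}$ (mesoscopically large, microscopically small), and on each box impose a pinned slope $(s_i,t_i)$ with the boundary height values on the box edges interpolated linearly. The number of stepped surfaces compatible with such a piecewise-linear boundary datum is, to exponential order, $\exp(-\eps^{-2}\sum_i \sigma(s_i,t_i)\cdot\mathrm{area}(\mathrm{box}_i))$ by the torus estimate (\ref{Z-p}) applied locally, once one checks that the contribution of surfaces straying far from the prescribed slope on a box is exponentially negligible — this is the concentration-of-slope fact invoked after (\ref{Z-p}). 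Summing over boxes and passing to the Riemann-sum limit turns the exponent into $\eps^{-2}\int_U \sigma(\nabla h)\,dx\,dy$ for the macroscopic profile $h$.

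Second, I would turn this into a large deviations statement: for any fixed Lipschitz function $h$ spanning $\gamma$ (with Lipschitz constant in the triangle of admissible slopes), the probability that the random stepped surface lies within $\delta$ of the graph of $h$ is $\exp(-\eps^{-2}(I(h)-I(h_0))+o(\eps^{-2}))$, where $I(h)=\int_U\sigma(\nabla h)$ and $h_0$ is the minimizer. The upper bound comes from covering the $\delta$-tube by finitely many piecewise-linear profiles and using the local counting bound; the lower bound comes from exhibiting enough surfaces near a fixed smooth approximant to $h$, again via the local estimate, using that $\sigma$ is continuous so coarse-graining costs only $o(\eps^{-2})$. Since $\sigma$ is strictly convex on the interior of the triangle (this follows from Theorem \ref{sigmaformula}: the Hessian of the Lobachevsky-function expression is positive definite where all three arguments are in $(0,1)$), the functional $I$ is strictly convex, so its minimizer $h_0$ subject to the boundary condition $h|_{\partial U}$ (read off from $\gamma$) is unique. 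A standard Borel--Cantelli / union-bound argument over a finite $\delta$-net of competing profiles then forces the random surface into an arbitrarily small neighborhood of $h_0$ with probability tending to $1$.

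Third, I would address the boundary: the discrete curves $\gamma_\eps$ give discrete boundary height functions on $\partial U$ that converge uniformly to the boundary trace of the limiting profile; one must check that the variational problem is stable under this convergence (continuity of the minimizer in the boundary data, which again uses convexity of $\sigma$ and the a priori Lipschitz bound forced by the admissible-slope constraint), and that admissibility of $\gamma_\eps$ (spannable by a stepped surface) passes to a nonempty admissible class in the limit so that $h_0$ exists. The existence of a minimizer is the direct method in the calculus of variations: $I$ is convex and lower semicontinuous, the admissible class of $1$-Lipschitz-type functions with the given boundary values is compact in the uniform topology (Arzel\`a--Ascoli), so a minimizer exists and by strict convexity on the non-frozen region it is unique up to the frozen facets where $\sigma$ is affine — but even there the minimizer is unique because the facet geometry is determined by matching to the curved region.

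The main obstacle is the local counting estimate near the boundary of the triangle of slopes, i.e. near frozen facets, where $\sigma$ is not strictly convex and the torus asymptotics degenerate (the integrand in $\log Z$ picks up the behavior described after Theorem \ref{sigmaformula} when $a\ge b+c$ etc.). There one cannot naively use the interior concentration-of-slope argument, and one needs a separate combinatorial input — typically a monotone-coupling or height-monotonicity argument showing the surface cannot fluctuate into a frozen region by more than $o(\eps^{-1})$ — to control the probability that the random surface deviates from $h_0$ along a facet. Handling the matching between the analytic curved region and the flat facets, and proving the error terms are genuinely $o(\eps^{-2})$ uniformly up to $\partial U$, is where the real work of \cite{CKP} lies; everything else is the soft large-deviations and convex-analysis machinery sketched above.
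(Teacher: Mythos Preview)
Your proposal is correct and follows essentially the same route as the paper's sketch: mesoscopic partition at scale $\sqrt{\eps}$, local counting via the entropy/surface tension $\sigma$ (identified from the torus computation), a covering-plus-compactness large-deviations argument on the space of Lipschitz profiles, and uniqueness from convexity of $\sigma$. Your write-up is in fact more explicit than the paper's own sketch in several places---the LDP rate-function formulation, the boundary-stability step, and the facet/degenerate-slope issue---but the architecture is the same.
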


\begin{proof}
Here is a sketch of the proof of this theorem.
The space $L(\gamma)$ 
of Lipschitz functions with slope\footnote{that is, the slope at almost every point} in $\{(s,t)~|~0\leq s,t,1-s-t\le 1\}$
having fixed boundary values (i.e. so that their graph spans $\gamma$)
is compact in any natural topology, for example the $L^\infty$ metric. 
The integral of $\sigma(\nabla h)$ is a lower semicontinuous
functional on this space $L(\gamma)$, that is, given a 
convergent sequence of
functions, the surface tension integral of the limit is less than or equal
to the limit of the surface tension integrals. This follows from 
approximability of any Lipschitz function by 
piecewise linear Lipschitz functions on a fine triangular grid
(see the next paragraph).
The unicity statement follows from the convexity of $\sigma$.

Let $L_\eps(\gamma)$ be the set of stepped surfaces spanning $\gamma_\eps$.
Given a small $\delta>0$, take a finite cover of $L(\gamma)$ by
balls of radius $\delta$.
One can estimate the number of elements of $L_\eps(\gamma)$ 
contained in each ball as follows. This is essentially a large-deviation estimate.
Given a function $f\in L(\gamma)$, to estimate the number of
stepped surfaces of $L_\eps(\gamma)$  within $\delta$ of $f$, 
we triangulate $\Omega$ into equilateral
triangles of side $\sqrt{\eps}$. Because $f$ is Lipschitz,
Rademacher's theorem says that $f$ is differentiable almost everywhere.
In particular on all but a small fraction of these mesoscopic triangles, $f$ is nearly linear.
The number of surfaces lying near $f$ can then be broken up into
the number of surfaces lying near $f$ over each triangle, plus some errors
since these surfaces must glue together along the boundaries of the triangles.
It remains then to estimate the number of stepped surfaces lying close to 
a linear function on a triangle. 
This number can be shown, through a standard subadditivity argument,
to depend only on the slope of the triangle and its area, in the sense
that the number of stepped surfaces lying close to a linear function of slope $(s,t)$
on a triangle
of area $A$ is $\exp(-A\sigma(s,t)(1+o(1)))$ for some function $\sigma$. 

It remains then to compute $\sigma(s,t)$, which is minus the growth rate
of the stepped surfaces of slope $(s,t)$. This was accomplished for the torus
above, and again a standard argument shows that the value for the 
torus is the same as the value for the triangle or any other shape.
\end{proof}

\begin{exer}[up-right lattice paths] 
Consider the set of all up-right lattice paths in $\Z^2$ starting at $(0,0)$,
that is, paths in which each step is $(1,0)$ or $(0,1)$.
Given $a,b>0$ consider a measure on up-right paths 
of length $n$ which gives a path with $h$
horizontal steps and $v$ vertical steps a weight proportional to
$a^hb^v$. 
What is the partition function for paths of total length $n$?
What is the typical slope of a path of length $n$ for this measure?
What is the exponential growth rate of unweighted paths with average slope $s$?
Describe the Legendre duality relation here.
\end{exer}

\begin{exer}
On $\Z^2$, edges on every other vertical column have weight $2$
(other edges have weight $1$). 
Redo the previous exercise if the paths are weighted according to
the product of their edge weights times the $a^v b^h$ factor.
\end{exer}

\section{Burgers equation}

The surface tension minimization problem of Theorem \ref{CKP} above
can be solved as follows.
The Euler-Lagrange equation is 
\begin{equation}\label{EL}
\text{div}(\nabla\sigma(\nabla h))=0.
\end{equation}
That is, any surface tension minimizer will satisfy this equation locally,
at least where it is smooth. Here we should interpret this equation as follows.
First, $\nabla h=(s,t)$ is the slope, which is a function of $x,y$. Then
$\sigma(\nabla h)$ defines the local surface tension as a function of $x,y$. 
Now $\nabla\sigma$ is the gradient of $\sigma$ as a function of $s,t$.
By Legendre duality, see (\ref{dsigma}), we have
$\nabla\sigma(s,t)=(X,Y)$.
Finally the equation is that the divergence of this is zero,
that is $\frac{dX}{dx}+\frac{dY}{dy}=0.$
Substituting $X=\log b/a,Y=\log c/a$, see (\ref{dsigma}), this is
\begin{equation}\label{realEL}
\frac{d\log(b/a)}{dx}+\frac{d\log(c/a)}{dy}=0.
\end{equation}

Consider the triangle with sides $1,b/a,c/a$, placed in the complex plane so that the edge of length $1$ goes from $0$ to $1$
as shown in figure \ref{triangle}. 
\begin{figure}[htbp]
\scalebox{1.0}{\includegraphics{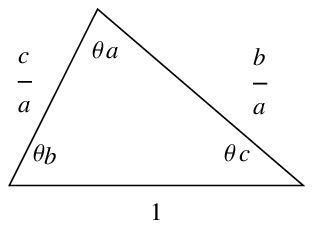}}
\caption{\label{triangle}}
\end{figure}
Define $z,w\in\C$ to be the other two edges 
of the triangle so that $1+z+w=0$ and $z=(b/a)e^{i(\pi-\theta_c)}$
and $w=(c/a)e^{i(\pi+\theta_b)}.$

Then $$\nabla h = (s,t)=(\frac{\theta_b}{\pi},\frac{\theta_c}{\pi})=
(\frac1{\pi}\arg(-w),\frac1\pi\arg(-\frac{1}z)).$$
In particular we have the consistency relation
$$s_y=h_{xy}=h_{yx}=t_x,$$
which gives 
\begin{equation}\label{imEL}
\im\left(\frac{z_x}{z}+\frac{w_y}{w}\right)=0.
\end{equation}

Combining (\ref{realEL}) and (\ref{imEL}) gives
\begin{theorem} For $z,w$ as defined above,
a general solution to the Euler-Lagrange equation (\ref{EL}) is given by
\begin{equation}\label{zxwy}
\frac{z_x}{z}+\frac{w_y}{w}=0.
\end{equation}
\end{theorem}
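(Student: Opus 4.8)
The plan is to combine the two scalar equations that have already been derived in the text: the real part of the Euler–Lagrange equation, equation~(\ref{realEL}), and the imaginary part coming from the consistency relation $s_y = t_x$, equation~(\ref{imEL}). The claim is simply that these two real equations package together into the single complex equation~(\ref{zxwy}). So the proof is essentially a bookkeeping argument: I need to show that $\re\!\left(\frac{z_x}{z} + \frac{w_y}{w}\right) = 0$ is equivalent to~(\ref{realEL}), and then invoke~(\ref{imEL}) for the imaginary part.

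First I would recall the parametrization $z = (b/a)e^{i(\pi-\theta_c)}$ and $w = (c/a)e^{i(\pi+\theta_b)}$, so that $\log z = \log(b/a) + i(\pi - \theta_c)$ and $\log w = \log(c/a) + i(\pi + \theta_b)$. Differentiating $\log z$ with respect to $x$ gives $\frac{z_x}{z} = \frac{d}{dx}\log(b/a) - i\,(\theta_c)_x$, and similarly $\frac{w_y}{w} = \frac{d}{dy}\log(c/a) + i\,(\theta_b)_y$. Taking real parts and adding, $\re\!\left(\frac{z_x}{z} + \frac{w_y}{w}\right) = \frac{d\log(b/a)}{dx} + \frac{d\log(c/a)}{dy}$, which is exactly the left side of~(\ref{realEL}). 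Hence the vanishing of the real part of~(\ref{zxwy}) is precisely the Euler–Lagrange equation~(\ref{realEL}). The vanishing of the imaginary part of~(\ref{zxwy}) is, by definition, equation~(\ref{imEL}), which the text has already shown follows from the mixed-partials identity $h_{xy}=h_{yx}$ applied to $s = \theta_b/\pi$ and $t = \theta_c/\pi$. Since a complex quantity vanishes iff both its real and imaginary parts vanish, $\frac{z_x}{z} + \frac{w_y}{w} = 0$ is equivalent to the conjunction of~(\ref{realEL}) and~(\ref{imEL}), and therefore holds for any smooth surface-tension minimizer; conversely any solution of~(\ref{zxwy}) satisfies both, hence solves~(\ref{EL}). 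That gives the ``general solution'' statement.

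The one point that needs a little care — and what I'd flag as the main obstacle — is the sign and branch conventions in relating $(\theta_b)_y$, $(\theta_c)_x$ to the ``$\arg$'' expressions in~(\ref{imEL}): one must check that $\im(z_x/z) = -(\theta_c)_x$ and $\im(w_y/w) = (\theta_b)_y$ combine with the right signs so that $\im\!\left(\frac{z_x}{z}+\frac{w_y}{w}\right) = (\theta_b)_y - (\theta_c)_x$ matches, up to the overall factor $\pi$, the relation $s_y - t_x = 0$ with $s = \theta_b/\pi$, $t = \theta_c/\pi$. This is just tracking the $\pi - \theta_c$ versus $\pi + \theta_b$ in the phases, but it is where an error would creep in. Once the signs are verified the theorem follows immediately, with no analytic input beyond what~(\ref{realEL}) and~(\ref{imEL}) already provide.
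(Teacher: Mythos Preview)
Your proposal is correct and follows exactly the approach the paper takes: the text simply says ``Combining (\ref{realEL}) and (\ref{imEL}) gives'' the theorem, and you have spelled out that combination by checking that $\re(z_x/z+w_y/w)$ is the left side of (\ref{realEL}) while $\im(z_x/z+w_y/w)=(\theta_b)_y-(\theta_c)_x=\pi(s_y-t_x)$ gives (\ref{imEL}). Your sign check is fine, so there is nothing to add.
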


This equation can be solved using the method of
``complex characteristics''. The solutions
can be parametrized by analytic functions in two variables.

\begin{cor}\label{Qsoln} For any
solution to (\ref{zxwy}) there is a bivariate analytic function $Q_0$ for which
\begin{equation}\label{implicit}Q_0(z,xz+yw)=0.\end{equation} Conversely, any equation
of this type (with $Q_0$ analytic in both variables, which defines $z,w$ implicitly
as functions of $x,y$) gives a solution to (\ref{zxwy}).
\end{cor}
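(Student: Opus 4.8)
The plan is to treat \eqref{zxwy} as a first-order quasilinear PDE system and integrate it by the method of characteristics, exploiting the fact that $z$ and $w$ are not independent but constrained by $1+z+w=0$. Differentiating this constraint gives $z_x+w_x=0$ and $z_y+w_y=0$, so $w_x=-z_x$ and $w_y=-z_y$. Substituting $w_y=-z_y$ into \eqref{zxwy} yields $z_x/z=z_y/w$, i.e. $w\,z_x - z\,z_y = 0$. So the single unknown $z=z(x,y)$ (with $w=-1-z$) satisfies the linear transport-type equation
\begin{equation*}
(-1-z)\,z_x - z\,z_y = 0.
\end{equation*}
This is a quasilinear equation whose characteristic curves in $(x,y)$-space are the integral curves of the vector field $(-1-z, -z) = (w, -z)$, along which $z$ (hence $w$) is constant. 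Because $z$ is constant along a characteristic, the characteristic is a straight line: $dx/dt = w$, $dy/dt = -z$, so $x(t) = x_0 + wt$, $y(t) = y_0 - zt$ for constants depending on which characteristic we are on.

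The next step is to encode the family of characteristics by eliminating the parameter $t$. Along the characteristic through $(x_0,y_0)$ carrying value $z$, we have $x = x_0 + wt$ and $y = y_0 - zt$, and one checks that the combination $xz + yw$ evolves as $(xz+yw)' = z\dot x + w\dot y = zw - wz = 0$, so $xz + yw$ is \emph{also} constant along each characteristic. Therefore both $z$ and the quantity $\zeta := xz + yw = xz - (1+z)y$ are first integrals of the characteristic flow. Consequently any relation $Q_0(z, \zeta) = 0$ that is constant on characteristics determines a solution: given such a $Q_0$, the implicit equation $Q_0(z, xz+yw)=0$ defines $z$ (and then $w = -1-z$) as a function of $(x,y)$, and since the level set structure of $(z,\zeta)$ is exactly the characteristic foliation, the resulting $z(x,y)$ solves the transport equation, hence \eqref{zxwy}. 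For the forward direction, given a solution $z(x,y)$ of \eqref{zxwy}, the two functions $z$ and $\zeta$ are functionally dependent (both being constant on the one-parameter family of characteristics, which foliate an open set), so there is an analytic relation $Q_0(z,\zeta)=0$ between them; this is the desired $Q_0$.

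To make the "functional dependence" rigorous one computes the Jacobian $\partial(z,\zeta)/\partial(x,y)$ and shows it vanishes: $\zeta_x = z + x z_x - y(1+z)_x\cdot 0$—more carefully, $\zeta_x = z + xz_x + y w_x = z + xz_x - yz_x$ and $\zeta_y = xz_y + w + yw_y = xz_y - (1+z) - yz_y$, and one verifies $z_x \zeta_y - z_y \zeta_x = 0$ precisely by using $(-1-z)z_x = z\, z_y$; the terms organize so that the Jacobian is a multiple of $(w z_x - z z_y)$, which is zero. I expect the main obstacle to be purely bookkeeping: handling the analyticity/implicit-function regularity (one needs $z,w$ to be locally analytic, so that $Q_0$ can be taken analytic and the implicit equation genuinely solvable, which requires $\partial_1 Q_0 + x\,\partial_2 Q_0 \neq 0$ generically), and verifying the Jacobian identity cleanly without sign errors coming from the $w=-1-z$ substitution. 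The conceptual content—first integrals $z$ and $xz+yw$ of a linear characteristic flow—is straightforward; the care lies in the degeneracy locus where characteristics are tangent or $z$ becomes stationary, which corresponds exactly to the boundary of the liquid region.
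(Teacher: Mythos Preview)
Your argument is correct. The core computation---that the Jacobian $\partial(z,\zeta)/\partial(x,y)$ vanishes precisely when $wz_x=zz_y$, which via $z+w+1=0$ is equivalent to \eqref{zxwy}---is exactly the paper's proof; the paper simply writes ``the existence of an analytic dependence between $z$ and $xz+yw$ is equivalent to $z_x/z_y=(xz+yw)_x/(xz+yw)_y$,'' expands, uses $z_xw_y=z_yw_x$ (from the analytic relation between $z$ and $w$) to cancel terms, and arrives at $z_xw=z_yz$. What you add is the characteristics picture: reducing to the single transport equation $wz_x-zz_y=0$, observing that $z$ is constant along the straight-line characteristics $(\dot x,\dot y)=(w,-z)$, and then \emph{discovering} the second invariant $xz+yw$ by computing its derivative along the flow. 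This is conceptually richer---it explains why the combination $xz+yw$ appears at all, and it makes transparent that the solution is constant on lines (which is how one actually identifies $Q_0$ from boundary data in applications). The paper's version is terser and treats both directions at once via the Jacobian equivalence, but gives no hint of where $xz+yw$ comes from.
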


\begin{proof}
The existence of an analytic dependence between
$z$ and $xz+yw$ is equivalent to the 
equation 
$$\frac{z_x}{z_y}=\frac{(xz+yw)_x}{(xz+yw)_y},$$
or
$$z_x(xz_y+w+yw_y)=z_y(xz_x+z+yw_x).$$
However since $z$ and $w$ are analytically related, $z_xw_y=z_yw_x$,
leaving $$z_x w=z_y z.$$ Finally, since $z+w+1=0$, $z_y=-w_y$ and
so this last is equivalent to (\ref{zxwy}).
\end{proof}

\subsection{Volume constraint}

If we impose a volume constraint, that is, are interested in 
stepped surfaces with a fixed volume on one side, we can 
put a Lagrange multiplier in the minimization problem,
choosing to minimize instead $\int\int\sigma + \lambda\int\int h.$
This will have the effect of changing the Euler-Lagrange equation
to $$\text{div}(\nabla\sigma(\nabla h))=c$$ for a constant $c$.
Equation (\ref{zxwy}) then becomes 
\begin{equation}
\frac{z_x}{z}+\frac{w_y}{w}=c
\end{equation}
for the same constant $c$
and equation (\ref{implicit})
becomes 
\begin{equation}\label{implicitc}
Q(e^{-cx}z,e^{-cy}w)=0.
\end{equation}
For some reason the $c\ne0$ case has a more symmetric
equation than the $c=0$ case.

\subsection{Frozen boundary}

As we have discussed, and seen in the simulations, the minimizers
that we are looking for are not always analytic, in fact not 
even in general smooth. However Corollary \ref{Qsoln} seems to give 
$z,w$ analytically as functions of $x,y$.  What is going on is that
the equation (\ref{zxwy}) is only valid when our triangle
is well defined. When the triangle flattens out, that is, when $z,w$ become real,
typically one of the angles tends to $\pi$ and the other two to zero.
This implies that the slope $(s,t)$ is tending to one of the corners
of the triangle of allowed slopes, and we are entering a frozen phase.
The probabilities of one of the three edge types is tending to one,
and therefore we are on a facet.

The boundary between the analytic part of the limit surface and the facet
is the place where $z,w$ become real. This is called the {\bf frozen boundary},
and is described by the real locus of $Q$.

Note that when $z,w$ become real, 
the triangle can degenerate in one of three ways: the apex $-w$ can fall
to the left of $0$, between $0$ and $1$, or to the right of $1$.
These three possibilities correspond to the three different orientations of facets.

\subsection{General solution}

For general boundary conditions finding the analytic function
$Q$ in (\ref{Qsoln}) which describes the limit shape is difficult.

For boundary conditions resembling those in 
Figure \ref{lozengehexagon}, however, one can give an explicit answer.
Let $\Omega$ be a polygon with $3n$ edges in the
directions of the cube roots of $1$, in cyclic order 
$1,e^{2\pi i/3},e^{4\pi i/3},1,\dots$,
as in the regular hexagon or Figure \ref{heart}\footnote{The cyclic-order condition
can be relaxed by allowing some edges to have zero length}.
\begin{figure}[htbp]
\center{\scalebox{.5}{\includegraphics{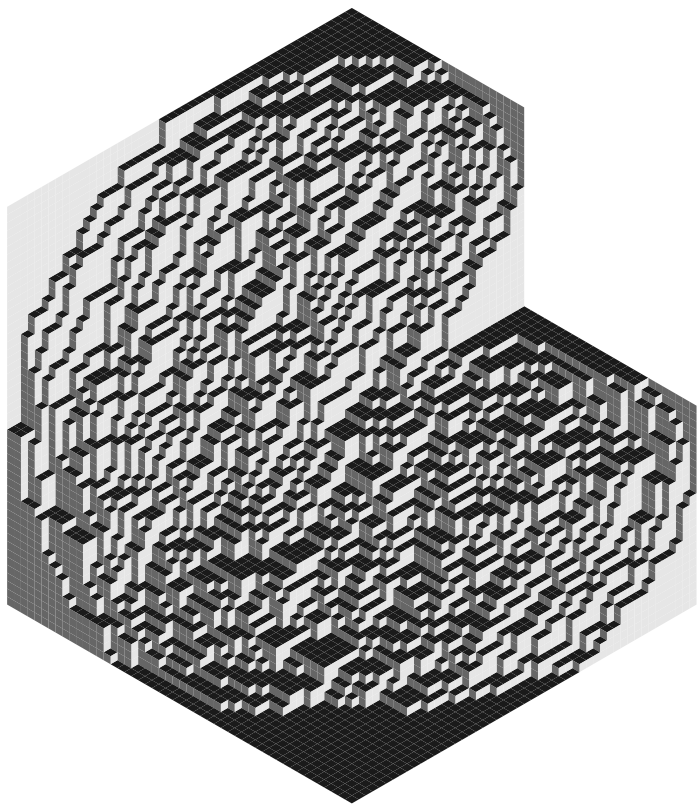}}}
\caption{\label{heart}In this case $Q$ has degree $3$ and
the frozen boundary is a cardiod.}
\end{figure}
Suppose that $\Omega$ can be spanned by (a limit of) stepped 
surfaces. Suppose that there is no ``taut edge'' in $\Omega$,
that is, every edge in the underlying graph has probability
lying strictly between $0$ and $1$ of occurring in a dimer cover
(an example of a region with a taut edge is the union
of two regular hexagons, joined along a side).
Then the limit shape arises from a {\bf rational plane curve} 
$$Q(z,w)=\sum_{0\le i,j,i+j\le n} c_{ij}z^iw^j$$
of degree $n$ (or $\leq n$ if there are edges of zero length). 
It can be determined 
by the condition that
its dual curve (the frozen boundary) 
is tangent to the $3n$ edges of $\Omega$, in order. 

Note that these polygonal boundary conditions can be used to approximate any boundary curve.

\begin{exer}
Consider stepped surfaces bounding the regular hexagon.
Show that plugging in $Q_0(u,v)=1+u+u^2-v^2$ into (\ref{implicit})
gives the solution to the limit shape for this boundary.

Let $Q(u,v)=1+u^2+v^2+r(u+v+uv),$ where $r>2$ is a parameter.
Show that this value of $Q$  in (\ref{implicitc})
gives a solution to the volume-constrained limit shape,
with volume which is a function of $r$.
\end{exer}

\section{Amoebas and Harnack curves}

As one might expect, everything we have done can be generalized
to other planar, periodic, bipartite graphs. Representative
examples are the square grid and the square-octagon grid (Figure \ref{squareoctagon}).

For simplicity, we're going to deal mainly with weighted
honeycomb dimers. Since it is possible, after simple
modifications, to embed any other periodic bipartite planar
graph in a honeycomb graph (possibly increasing the size of the
period), we're actually not losing any generality. We'll also illustrate 
our calculations in an example in section \ref{examplesection}.

So let's start with the honeycomb with a periodic
weight function $\nu$ on the edges, periodic with
period $\ell$ in directions $\hat x$ and $\hat y$. As in 
section \ref{Torus} we are led to an expression for the partition
function for the $n\ell\times n\ell$ torus:
 $$Z(H_{n\ell})=\frac12(Z_{00}+Z_{10}+Z_{01}-Z_{11})$$
where 
$$Z_{\tau_1,\tau_2}=\prod_{z^n=(-1)^{\tau_1}}\prod_{w^n=(-1)^{\tau_2}}P(z,w),$$
and where $P(z,w)$ is a polynomial, the {\bf characteristic polynomial},
with coefficients depending
on the weight function $\nu$.
The polynomial $P(z,w)$ is the determinant of $K(z,w)$,
the Kasteleyn matrix for the $\ell\times\ell$ torus (with 
appropriate extra
weights $z$ and $w$ on edges crossing fundamental domains);
as such it is just the signed sum of matchings on the $\ell\times\ell$ torus
consisting of a single $\ell\times\ell$ fundamental domain (with
the appropriate weight $(-1)^{h_xh_y}z^{h_x}w^{h_y}$).

The algebraic curve $P(z,w)=0$ is called the
{\bf spectral curve} of the dimer model, since it
describes the spectrum of the $K$ operator on the whole 
weighted honeycomb graph.

Many of the physical properties of the dimer model are
encoded in the polynomial $P$.
\begin{theorem}
The free energy is 
$$-\log Z:=-\lim_{n\to\infty}\frac1{n^2}\log Z(H_{n\ell})=-\frac1{(2\pi i)^2}\int_{|z|=|w|=1}\log P(z,w)\frac{dz}{z}\frac{dw}{w}.$$
The space of allowed slopes for invariant measures is the Newton
polygon of $P$ (the convex hull in $\R^2$ of the set $\{(i,j):z^iw^i 
\text{ is a monomial of $P$}\}$). The surface tension $\sigma(s,t)$
is the 
Legendre dual of the Ronkin function
$$R(X,Y)=\frac1{(2\pi i)^2}\int_{|z|=e^X}\int_{|w|=e^Y}\log P(z,w)\frac{dz}{z}\frac{dw}{w}.$$
\end{theorem}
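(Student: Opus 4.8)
The three assertions of the theorem are essentially restatements, in the periodic-weight setting, of computations already carried out in the honeycomb case $P(z,w)=a+bz+cw$; the plan is to mimic those arguments, replacing the explicit linear polynomial by the general characteristic polynomial $P$. First I would establish the free energy formula. Starting from $Z(H_{n\ell})=\frac12(Z_{00}+Z_{10}+Z_{01}-Z_{11})$ with $Z_{\tau_1\tau_2}=\prod_{z^n=(-1)^{\tau_1}}\prod_{w^n=(-1)^{\tau_2}}P(z,w)$, I observe that $\frac1{n^2}\log Z_{\tau_1\tau_2}$ is a Riemann sum for $\frac1{(2\pi i)^2}\int_{|z|=|w|=1}\log P\,\frac{dz}{z}\frac{dw}{w}$ over a lattice of $n$th (or shifted $n$th) roots of unity. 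Using $\max_{\tau}Z_{\tau_1\tau_2}\le 2|Z|\le 4\max_\tau Z_{\tau_1\tau_2}$ (valid once $n$ is large so the signed combination does not vanish to leading order), it suffices to show each normalized log-sum converges to the integral. This is where the only real analytic subtlety lies: $\log P$ may fail to be integrable as a naive limit because $P$ can vanish on the unit torus $\{|z|=|w|=1\}$, and Riemann sums can pick up anomalously large contributions from roots near those zeros. The resolution is exactly the one used earlier for $a+bz+cw$: the four sums $Z_{\tau_1\tau_2}$ are Riemann sums on four staggered sublattices, and (by the structure of real zeros of $P$ — it is a real polynomial, so its zeros on the unit torus come in conjugate pairs and form a set of measure zero, in fact a finite union of points for Harnack curves) at most one of the four staggered sums can have a sample point abnormally close to a zero of $P$ at any given scale. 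Hence at least three of the four sums are good approximations to the integral, and since they all have the same limit, so does $\frac1{n^2}\log Z(H_{n\ell})$. I would cite \cite{CKP, KOS} for the careful version of this estimate. The main obstacle is precisely making this ``at most one of four staggered lattices is bad'' argument rigorous for a general $P$; I expect to invoke the Harnack property of the spectral curve, proved in \cite{KOS}, which controls exactly how $P$ vanishes on the unit torus.

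Second, the statement that the space of allowed slopes is the Newton polygon of $P$: each monomial $z^iw^j$ appearing in $P$ corresponds (via the interpretation of $P=\det K(z,w)$ as a signed sum of matchings of the $\ell\times\ell$ fundamental domain weighted by $(-1)^{h_xh_y}z^{h_x}w^{h_y}$) to a matching of the torus with height change $(i,j)$; conversely all height changes of torus matchings arise this way. Rescaling the height change of an $n\ell\times n\ell$ torus matching by $n\ell$ gives a slope, and the set of such slopes is $\frac1{\ell}$ times the set of lattice points realized, whose convex hull is the Newton polygon of $P$. Combined with Sheffield's theorem (Theorem \ref{Sheffthm}), there is a unique EGM for each $(s,t)$ in this polygon, so the Newton polygon is exactly the space of allowed slopes. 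This step is essentially bookkeeping, parallel to the triangle-of-slopes computation for the honeycomb.

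Third, the surface tension is the Legendre dual of the Ronkin function. This follows the Legendre-duality argument of Section~8 verbatim. Since all matchings of height change $(h_x,h_y)$ on the torus receive weight $z^{h_x}w^{h_y}$ (with the sign absorbed asymptotically), the coefficient $C_{h_x,h_y}$ in $Z(H_{n\ell})$ as a polynomial in the variables $z,w$ (equivalently in the magnetic-field variables $X,Y$ via $|z|=e^X$, $|w|=e^Y$) satisfies, by the same large-deviation/concentration argument as before, $\frac1{(n\ell)^2}\log C_{h_x,h_y}\to -\sigma(s,t)$ where $(s,t)$ is determined by the saddle point. Writing $R(X,Y)=\lim\frac1{(n\ell)^2}\log Z(H_{n\ell})$ evaluated at $|z|=e^X,|w|=e^Y$ — which by the free energy computation is the stated contour integral over $|z|=e^X,|w|=e^Y$ — the extraction of the dominant coefficient is exactly Legendre transformation: $-\sigma(s,t)=R(X,Y)-sX-tY$ with $s=\partial_X R$, $t=\partial_Y R$. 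The one input needed beyond the honeycomb case is that the height-change variance on the $n\ell\times n\ell$ torus still tends to zero for fixed weights, so that the slope concentrates; this again follows from the decay-of-correlations estimates governed by the zeros of $P$ (the Fourier coefficients of $P^{-1}$ decay polynomially, giving summable-enough correlations), as in the honeycomb analysis. Thus the only genuinely new work is the Harnack-curve-based control of $\log P$ near its torus zeros; everything else is a transcription of the $P=a+bz+cw$ arguments.
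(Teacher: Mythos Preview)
Your proposal is correct and matches the paper's approach: the paper does not give a separate proof of this theorem, but states it as the direct generalization of the honeycomb computations of Sections~4--7 (the free-energy Riemann-sum argument with the staggered-lattice trick of Section~6, and the Legendre-duality computation of Section~7), with the Harnack-curve control of the zeros of $P$ from \cite{KOS} replacing the explicit analysis of the two simple zeros of $a+bz+cw$. Your write-up is exactly that transcription, with the correct identification of the one nontrivial analytic input (the structure of zeros of $P$ on the unit torus) and the correct bookkeeping for the Newton-polygon and Legendre-dual statements.
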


We'll see more below.

\subsection{The amoeba of $P$}

The {\bf amoeba} of an algebraic curve $P(z,w)=0$ is the set
$$\A(P)=\{(\log|z|,\log|w|)\in\R^2~:~P(z,w)=0\}.$$
In other words, it is a projection to $\R^2$ of the zero set of $P$ in
$\C^2$, sending $(z,w)$ to $(\log|z|,\log|w|)$.
Note that for each point $(X,Y)\in\R^2$, the amoeba contains
$(X,Y)$ if and only if the torus $\{(z,w)\in\C^2~:~|z|=e^X,|w|=e^Y\}$
intersects $P(z,w)=0$. 

The amoeba has ``tentacles'' which are regions where
$z\to0,\infty$, or $w\to0,\infty$. 
Each tentacle is asymptotic to a line $\alpha\log |z|+\beta\log|w|+\gamma=0$. These tentacles divide the complement
of the amoeba into a certain number of unbounded complementary
components. There may be bounded complementary components
as well.

The following facts are standard; see \cite{PR,MikhR}.
The Ronkin function of $P$ is convex in $\R^2$, and
linear on each component of the complement of 
$\A(P)$\footnote{This shows that the complementary components
are convex.}.
The Legendre duality therefore maps each component
of the complement of $\A(P)$ to a single point of the Newton
polygon $\N(P)$. This is a point with integer
coordinates. Unbounded complementary components
correspond to integer points on the boundary of $\N(P)$;
bounded complementary components correspond
to integer points in the interior of $\N(P)$\footnote{Not every
integer point in $\N(P)$ may correspond to a
complementary component of $\A$.}.

See Figure \ref{amoebaexample} for an example of an amoeba of a spectral curve.

\subsection{Phases of EGMs}

Sheffield's theorem, Theorem \ref{Sheffthm}
says that to every point $(s,t)$ in the Newton
polygon of $P$ there is a unique ergodic Gibbs measure $\mu_{s,t}$.
The local statistics for a measure $\mu_{s,t}$ are determined by
the inverse Kasteleyn matrix $K_{X,Y}^{-1}$,
where $(X,Y)$ is related to $(s,t)$ via the Legendre duality,
$\nabla R(X,Y)=(s,t)$. As discussed in section \ref{IKM},
values of $K^{-1}$ are (linear combinations of) Fourier
coefficients of $1/P(z,w)$. In particular,
if $P(z,w)$ has no zeroes on the unit torus $\{|z|=|w|=1\}$,
then $1/P$ is analytic and so
its Fourier coefficients decay exponentially fast.
On the other hand if $P(z,w)$ has simple zeroes on the unit torus,
its Fourier coefficients decay linearly. 

This is exactly the condition which separates the different phases
of the dimer model. 
If a slope $(s,t)$ is chosen so that $(X,Y)$ is in 
(the closure of) an unbounded
component of the complement of the amoeba, 
then certain Fourier coefficients of $1/P$ (those contained in the appropriate dual cone) will vanish. This is enough to ensure that
$\mu_{s,t}$ is in a frozen phase (yes, this requires some argument
which we are not going to give here).
For slopes $(s,t)$ for which $(X,Y)$ is in (the closure of) a bounded
component of the complement of the amoeba, the 
edge-edge correlations decay exponentially fast (in all directions)
which is enough to show that the height fluctuations
have bounded variance, and we are in a gaseous (but not
frozen, since the correlations are nonzero) phase.

In the remaining case, $(X,Y)$ is in the interior of the amoeba,
and $P$ has zeroes on a torus. 
It is a beautiful and deep fact that the spectral curves 
arising in the dimer model are special in that $P$
has either two zeros, both simple, or a single node\footnote{A node 
is point where $P=0$ looks locally like the product of two lines, e.g. $P(x,y)=x^2-y^2+O(x,y)^3$ near $(0,0)$.} over each point in the
interior of $\A(P)$. As a consequence\footnote{We showed what happens in the case of a
simple pole already. The case of a node is fairly hard.} in this case the 
edge-edge correlations
decay quadratically (quadratically in generic directions---there may be directions
where the decay is faster). It is not hard to show that
this implies that the height variance
between distant points is unbounded, and we are in a 
liquid phase.

\subsection{Harnack curves}

Plane curves with the property described above,
that they have at most two zeros (both simple) or a single node
on each torus $|z|=constant,|w|=constant$
are called {\bf Harnack curves},
or simple Harnack curves. They were studied classically
by Harnack and more recently by Passare, Rullg\aa rd, Mikhalkin, 
and others \cite{PR, MikhR}.

The simplest definition is that a Harnack curve is a curve 
$P(z,w)=0$ with the property that the map from the zero set to
the amoeba $\A(P)$ 
is at most $2$ to $1$ over $\A(P)$. It will be
$2$ to $1$ with a finite number of possible exceptions 
(the integer points of $\N(P)$) on which the map may be $1$ to $1$.

\begin{theorem}[\cite{KOS,KOHarnack}]
The spectral curve of a dimer model is a Harnack curve.
Conversely, every Harnack curve arises as the spectral curve of 
some periodic bipartite weighted dimer model.
\end{theorem}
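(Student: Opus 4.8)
The two implications require quite different ideas, and I will treat them separately. Throughout, $P(z,w)=\det K(z,w)$ is the characteristic polynomial of the model on an $\ell\times\ell$ fundamental domain, as in the previous section; since the weights are positive, a gauge transformation and a choice of Kasteleyn phases let us take a monomial multiple of $P$ to have real coefficients, so that $\{P=0\}$ is a real curve.

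\textbf{Spectral curves are Harnack.} Recall that a real plane curve is a simple Harnack curve precisely when the map from its zero set to its amoeba is at most two-to-one, with the exceptional fibres (of size one) occurring over lattice points of $\N(P)$; equivalently, by Mikhalkin--Rullg\aa rd \cite{MikhR}, when its real locus realizes the maximal (``Harnack'') arrangement of ovals, equivalently when $\A(P)$ attains the maximal area $\pi^2\,\mathrm{Area}(\N(P))$ of Passare--Rullg\aa rd \cite{PR}. The plan is to prove the two-to-one property directly from the structure of $P=\det K(z,w)$. Fix $(X,Y)$ in the interior of $\A(P)$; we must bound by $2$ the number of zeros of $P$ on the torus $|z|=e^{X}$, $|w|=e^{Y}$. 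Writing $z=e^{X}e^{i\theta}$ and counting, by the argument principle, the zeros $w$ of $P(z,\cdot)$ with $|w|<e^{Y}$ as a function of $\theta$, this count returns to itself after $\theta$ runs once around and changes by $\pm1$ exactly at the configurations we wish to enumerate; the claim is that positivity of the edge weights, together with the planar bipartite structure --- the same sign mechanism, Lemma \ref{signchange}, that made all terms in the expansion of $\det K$ agree in the proof of Theorem \ref{Z} --- forces the zeros of $P(z,\cdot)$ to stay consistently ordered by modulus as $\theta$ varies, so that each modulus level is crossed exactly twice: a Perron--Frobenius / interlacing phenomenon for the Kasteleyn operator. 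The genuinely delicate case is when the two zeros collide, so that $P$ has a node on the torus; one must then rule out any worse degeneration (a triple point, or two disconnected arcs of fibre) over a point of the interior of $\A(P)$. This is the ``fairly hard'' case referred to above.

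\textbf{Every Harnack curve is a spectral curve.} Let $P_{0}$ be a simple Harnack curve with Newton polygon $\N$. First realize $\N$ combinatorially: every convex lattice polygon is the Newton polygon of the dimer characteristic polynomial of some periodic bipartite planar graph $\G$ (for the standard triangle this is the honeycomb; in general a direct construction --- or embedding such a $\G$ in a large-period honeycomb --- does the job). By the forward direction, as the positive edge weights of $\G$ vary modulo gauge the spectral curve sweeps out a set $\mathcal F$ of simple Harnack curves with Newton polygon contained in $\N$, and equal to $\N$ on the generic locus. I would then show $\mathcal F=\H_{\N}$, the space of all Harnack curves with Newton polygon $\N$, by an open--closed argument. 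First, $\H_{\N}$ is connected: by Mikhalkin--Rullg\aa rd \cite{MikhR} it is an explicit cell, coordinatized by the cyclically ordered intercepts of the tentacles of the amoeba on its boundary together with the positions and areas of the bounded complementary components. Second, $\mathcal F$ is open in $\H_{\N}$, because the weight-to-curve map is a submersion onto $\H_{\N}$: one differentiates $\det K(z,w)$ in the edge weights and checks that these variations span the tangent space of $\H_{\N}$ (they independently slide the tentacle intercepts and resize the bounded components). Third, $\mathcal F$ is closed in $\H_{\N}$, by a compactness argument: if spectral curves $\Phi(\omega_{n})$ converge to a curve in $\H_{\N}$, then, after quotienting out the harmless positive-dimensional fibres of $\Phi$, a subsequence of the weights $\omega_{n}$ converges, since any true degeneration of the weights (an edge weight tending to $0$ or $\infty$) would drive the curve out of $\H_{\N}$ --- a tentacle runs to a coordinate line, a bounded component collapses to a point, or the Newton polygon shrinks. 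Being open, closed and nonempty in the connected space $\H_{\N}$, $\mathcal F$ equals $\H_{\N}$; in particular $P_{0}\in\mathcal F$. Finally, the spectral curve is defined only up to a monomial factor and a positive scalar --- invertible monomial substitutions and overall rescaling, which do not change the curve --- so this recovers $P_{0}$ itself.

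\textbf{Where the difficulty lies.} On the forward side the obstacle is the node degeneration in the two-to-one argument, flagged above. On the converse side it is the closedness/properness step: this is the only ingredient that does not reduce to a local computation, and carrying it out requires matching, stratum by stratum, the way families of dimer spectral curves degenerate at the boundary of weight space against the boundary stratification of the moduli space $\H_{\N}$ of Harnack curves.
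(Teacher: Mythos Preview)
The paper does not prove this theorem; it is quoted from \cite{KOS} and \cite{KOHarnack} with no in-text argument, so there is nothing here to compare your proposal against directly. I will comment on the proposal itself.

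For the forward direction, the entire burden lies on the sentence ``positivity of the edge weights \dots forces the zeros of $P(z,\cdot)$ to stay consistently ordered by modulus as $\theta$ varies.'' You gesture at Lemma~\ref{signchange} and at ``Perron--Frobenius / interlacing,'' but neither is turned into an argument: Lemma~\ref{signchange} concerns signs of cycles enclosed in a planar disk and says nothing about how the roots of $w\mapsto\det K(z,w)$ move as $\arg z$ varies, and no operator to which a Perron--Frobenius theorem applies has been identified. What you have written is a restatement of the conclusion, not a mechanism that produces it; the node case, which you yourself flag as ``genuinely delicate,'' is not touched at all.

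For the converse, your open--closed argument rests on knowing in advance that $\H_{\N}$ is a connected cell, ``coordinatized by the cyclically ordered intercepts of the tentacles \dots together with the positions and areas of the bounded complementary components,'' which you attribute to \cite{MikhR}. That is a misattribution: \cite{MikhR} proves the maximal-area characterization of Harnack curves, not this coordinate description. The coordinate description --- and with it the connectedness of $\H_{\N}$ --- is precisely the main theorem of \cite{KOHarnack} (the present paper remarks on this immediately after the theorem statement), and it is established there \emph{via} the dimer correspondence. Surjectivity of the weights-to-curves map is an input to that result, not a consequence of it, so as written your argument is circular. To repair it you would need either an independent proof that $\H_{\N}$ is connected, or a direct construction of dimer weights from a given Harnack curve; the latter is closer to what \cite{KOHarnack} actually does.
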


In \cite{KOHarnack} it was also 
shown, using dimer techniques, that the areas of complementary
components of $\A(P)$ and distances between tentacles
are global coordinates for the space of Harnack curves with
a given Newton polygon.

\subsection{Example}\label{examplesection}

Let's work out a detailed example illustrating the above
theory. We'll take dimers on the square grid with
$3\times 2$ fundamental domain (invariant under the lattice
generated by $(0,2)$ and $(3,1)$). 
Take fundamental domain with vertices labelled
as in Figure \ref{3X2}---we chose those weights to give us enough
parameters (5) to describe all possible gauge equivalence classes
of weights on the $3\times 2$ fundamental domain.
\begin{figure}[htbp]
\center{\scalebox{1}{\includegraphics{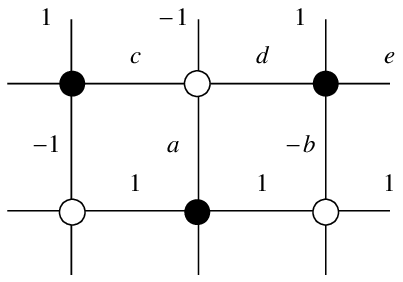}}}
\caption{\label{3X2}}
\end{figure} 
Letting $z$ be the eigenvalue
of translation in direction $(3,1)$ and $w$ be the eigenvalue
of translation by $(0,2)$, the Kasteleyn matrix (white white vertices corresponding
to rows and black to columns) is
$$K=\left(\begin{matrix}-1+\frac1w&1&\frac{e}{z}\\
c&a-w&d\\
\frac{z}{w}&1&-b+\frac1{w}\end{matrix}\right).$$
We have 
\begin{eqnarray*}
P(z,w)&=&\det K(z,w)\\
&=&
1+b+ab+bc+d+e-\frac{1+a+ab+c+d+ae}{w}+\frac{a}{w^2}-bw+\frac{ce}{z}+d\frac{z}{w}.
\end{eqnarray*}

This can of course be obtained by just counting
dimer covers of $\Z^2/\{(0,2),(3,1)\}$ with these weights,
and an appropriate factor $(-1)^{ij+j}z^iw^j$ when there are edges
going across fundamental domains.
Let's specialize to $b=2$ and all other edges of weight $1$.
Then $$P(z,w)=9-2w+\frac{1}{w^2}-\frac{7}{w}+\frac1{z}+\frac{z}{w}.$$
The amoeba is shown in Figure \ref{amoebaexample}.
\begin{figure}[htbp]
\center{\scalebox{1}{\includegraphics{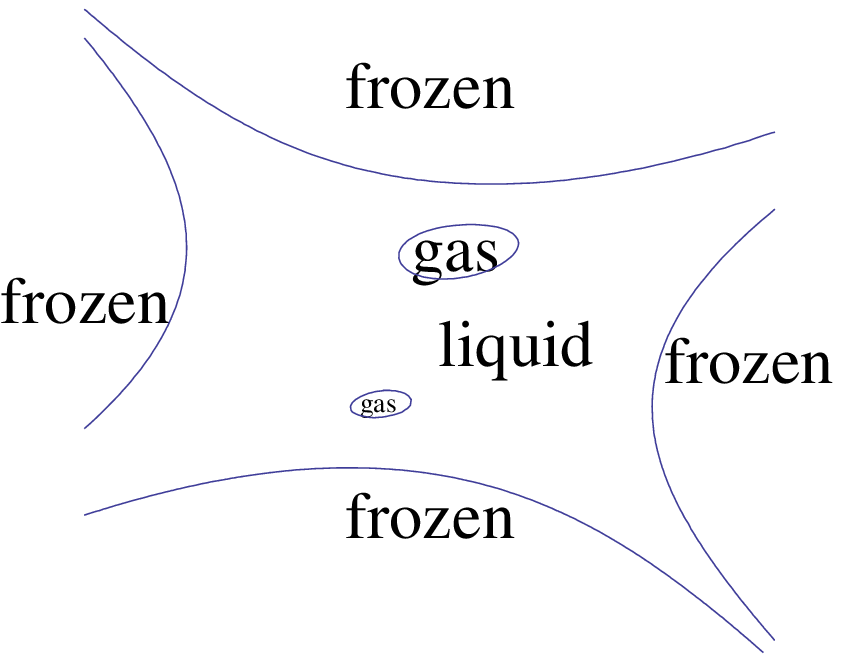}}}
\caption{\label{amoebaexample}}
\end{figure}
There are two gaseous components, corresponding
to EGMs with slopes $(0,0)$ and $(0,-1)$. The four frozen
EGMs correspond to  slopes $(1,-1),(0,1),(0,-2)$ and $(-1,0)$.
All other slopes are liquid phases.

For generic positive $a,b,c,d,e$ there are two gas components.
If we take $c=2,b=\frac12(3\pm\sqrt{3})$
and all other weights $1$ then there remains only one gaseous phase;
the other gas ``bubble'' in the amoeba 
shrinks to a point and becomes a node in $P=0$.
There is a codimension-$1$ 
subvariety of weights for which $P=0$ has a node.
Similarly there is a codimension-$2$ subvariety in which both gas 
bubbles disappear. For example
if we take all edge weights $1$ then both gaseous phases
disappear; we just have the uniform measure on dominos
again. Dimer models with no gas bubbles, that is,
in which $P$ has genus zero, have a number of other special 
properties, see \cite{K.isorad, KOHarnack}.

\begin{exer}
How would one model the uniform square-grid dimer model inside
the weighted honeycomb dimer model?
\end{exer}

\begin{exer}
On the square grid with uniform weights, 
take a fundamental domain consisting
of vertices $(0,0)$ and $(1,0)$. Show that (with an appropriate 
choice of coordinates) $P(z,w)=1+z+w-zw$. 
Sketch the amoeba of $P$. How many complementary
components are there? What frozen configurations do
they correspond to?
\end{exer}

\begin{exer}
Draw the amoebas for $P(z,w)=a+z+1/z+w+1/w$ for various 
values of $a\geq 4$ (when $|a|\ge 4$ this is a Harnack curve). 
Show that when $a=4,$ $P$ has a node
at $z=w=-1$. 
\end{exer}

\section{Fluctuations}

The study of the fluctuations of stepped surfaces away from their mean value
is not completed at present. 
Here we'll discuss the case of the whole plane, and stick to the
case of uniform honeycomb dimers with the maximal measure. 
Similar results
were obtained \cite{K.GFF} for uniform dimers on the
square lattice, and (harder) uniform weights on 
these lattices for other EGMs \cite{Kenyon.fluct}. This last paper
also computes the fluctuations for more general boundary
conditions, in particular when there is a nontrivial
limit shape. 

\subsection{The Gaussian free field}

The result is that the fluctuations are described by a Gaussian free field.
Here we discuss this somewhat mysterious object.

The Gaussian free field in two dimensions is a natural generalization of
one-dimensional Brownian motion. 
Recall that a Brownian bridge $\BB(t)$ is a Brownian motion on $\R$ started at the origin
and conditioned
to come back to its starting point after time $1$.
It is a random continuous function on $[0,1]$ which can be defined
by its Fourier series
$$\BB(t)=\sum_{k=1}^\infty \frac{c_k}{k}\sin(\pi k t),$$
where the coefficients $c_k$ are i.i.d. standard normals. 
We can consider the Brownian bridge to be a Gaussian measure on the 
infinite-dimensional space $L^2([0,1])$, and in the basis
of Fourier series the coefficients are independent.

The Gaussian free field on a rectangle has a very similar formula.
We write
$$\GFF(x,y) = \sum_{j,k=1}^\infty \frac{c_{j,k}}{\sqrt{j^2+k^2}}\sin(\pi j x)\sin(\pi k y),$$
where $c_{j,k}$ are i.i.d. standard normals. 
This defines the $\GFF$ as a Gaussian measure on a space of functions on $[0,1]^2$.
One problem with (or maybe we should say, feature of) this description  
is that the above series converges almost nowhere. 
And indeed, the $\GFF$ is not a random function but a random distribution. 
Given any smooth test function $\psi$, we can 
define $\GFF(\psi)$ by
$$\GFF(\psi) = \sum_{j,k=1}^\infty \frac{c_{j,k}}{\sqrt{j^2+k^2}}\int_{[0,1]^2}\psi(x,y)\sin(\pi j x)\sin(\pi k y)dx\,dy,$$
and this sum converges almost surely as long as $\psi$ is smooth enough
(in fact $\psi$ continuous suffices). Asking where the $\GFF$ lives, exactly, is a technical question;
for us it suffices to know that it is well defined when integrated against smooth
test functions.

The Gaussian free field on an arbitrary simply-connected bounded planar domain $\Omega$
has a similar description: it is a Gaussian process on (distributions on) $\Omega$
with the property that, when expanded in the basis of orthonormal eigenfunctions
of the Laplacian, has coefficients which are independent normals with
mean zero and variance $1/|\lambda|$, where $\lambda$
is the corresponding eigenvalue.

An alternative and maybe simpler description is that it is the Gaussian process with 
covariance kernel given by the Dirichlet Green's function $g(x,y)$.
That is, the $\GFF$ on $\Omega$ is the (unique) Gaussian measure
which satisfies 
$$\E(\GFF(z_1)\GFF(z_2))=g(z_1,z_2).$$
{}From this description we can see that the $\GFF$ is conformally invariant: given a conformal mapping $\phi\colon\Omega\to\Omega'$,
the Green's function satisfies
$g(z_1,z_2)=g(\phi(z_1),\phi(z_2)).$
This is enough to show that
$$\int_{\Omega}\GFF(z)\psi(z)|dz|^2=\int_{\Omega'}\GFF(w)\psi(\phi^{-1}(w))|dw|^2,$$ with the equality holding in distribution.

\subsection{On the plane}

The $\GFF$ on the plane has a similar formulation,
but it can only be integrated against $L^1$ functions of integral zero.
We have 
\begin{eqnarray*}
\lefteqn{\E((\GFF(z_1)-\GFF(z_2))(\GFF(z_3)-\GFF(z_4)))=}\hspace{2in}\\
&=&g(z_1,z_3)-g(z_1,z_4)-g(z_2,z_3)+g(z_2,z_4)\\
&=&-\frac1{2\pi}\log\left|\frac{(z_1-z_3)(z_2-z_4)}{(z_1-z_4)(z_2-z_3)}\right|,
\end{eqnarray*}
where the Green's function $g(z_1,z_2)=-\frac1{2\pi}\log|z_1-z_2|$.

\subsection{Gaussians and moments}

Recall that for a mean-zero multidimensional 
(even infinite dimensional) Gaussian process $X$,
if $x_1,\dots,x_n$ are linear functions of $X$ then
$\E(x_1\cdots x_n)$ is zero if $n$ is odd and if $n$ is even then
\begin{equation}\label{GFFmoments}
\E(x_1\cdots x_n)=\sum_\sigma \E(x_{\sigma(1)},x_{\sigma(2)})
\dots \E(x_{\sigma(n-1)},x_{\sigma(n)}),
\end{equation}
where the sum is over all pairings of the indices. 
For example
$$\E(x_1\cdots x_4)=\E(x_1,x_2)\E(x_3,x_4)+\E(x_1,x_3)\E(x_2,x_4)+
\E(x_1,x_4)\E(x_2,x_3).$$

This shows that the moments of order two, $\E(x_ix_j)$, where $x_i,x_j$ run over
a basis for the vector space,
determine a Gaussian process uniquely.
Another fact we will use is that any probability measure whose moments 
converge to those of a Gaussian, converges itself to a Gaussian 
\cite{Bil}.

\subsection{Height fluctuations on the plane}

We show here that the height fluctuations for 
the measure $\mu=\mu_{\frac13,\frac13}$ on dimer covers of the
honeycomb converge to the Gaussian free field. 
This is accomplished by explicitly computing the moments
$\E((h(z_1)-h(z_2))\dots(h(z_{n-1})-h(z_n)))$ and showing
that they converge to the moments of the $\GFF$. 

In fact we will only do the simplest case of the first
nontrivial moment. The calculations for higher moments are similar but 
more bookkeeping work is needed.

Let's fix four points $z_1,z_2,z_3,z_4\in\C$ and for each $\eps>0$
let $v_1,v_2,v_3,v_4$ be faces of $\eps\H$, the honeycomb scaled
by $\eps$, nearby.

In this section for convenience we will use the ``symmetric'' height function,
where $\omega_0$ is $1/3$ on each edge.
To compute $h(v_1)-h(v_2)$, we take a path in the dual graph
from $v_1$ to $v_2$ and count the number of dimers
crossing it, with a sign depending on whether the dimer has
white vertex on the left or right of the path.
The height difference $h(v_1)-h(v_2)$ is this signed number of dimers,
minus the expected signed number of dimers. 
When $\eps$ is  small, $v_1,v_2$ are many lattice spacings apart 
and we can choose a path which is polygonal, with edges in the 
three lattice directions. 
By linearity of expectation it suffices to consider the case when
both paths from $v_1$ to $v_2$ and from $v_3$ to $v_4$
are (disjoint) straight lines in lattice directions.
So let us consider first the case when both lines are vertical. 

Let $a_1,\dots,a_n$ be the edges crossing the first line (the line from
$v_1$ to $v_2$), and
$b_1,\dots,b_m$ be the edges crossing the second line (the line from $v_3$
to $v_4$).

Then 
\begin{eqnarray*}
\E[(h(v_1)-h(v_2))(h(v_3)-h(v_4))]&=&\sum_{i=1}^n\sum_{j=1}^m
\E\left[(\Id_{a_i}-\frac13)(\Id_{b_j}-\frac13)\right]\\
&=&\sum_{i=1}^n\sum_{j=1}^m
\E(\Id_{a_i}\Id_{b_j})-\frac19.
\end{eqnarray*}
Here $\Id_{a_i}$ is the indicator function of the presence of edge $a_i$,
and $\E(\Id_{a_i})=\E(\Id_{b_j})=\frac13$. 
This moment is thus equal to 
$$=\sum_{i=1}^n\sum_{j=1}^m K^{-1}(\b_i,\w'_j)K^{-1}(\b_j',\w_i)$$
where $a_i=\w_i\b_i$ and $\b_j=\w_j'\b_j'$.

At this point we need to use our knowledge of $K^{-1}(\b,\w)$ for
points $\b,\w$ far from each other.
We have 
\begin{lemma}\label{fundlemma}
$$K^{-1}(\w_{0,0},\b_{x,y})=\eps\re\left(\frac{e^{2\pi i(x-y)/3}}{\pi(e^{\pi i/6} x+e^{5\pi i/6} y)}\right)+
O(\frac{\eps}{|x|+|y|})^2.$$
\end{lemma}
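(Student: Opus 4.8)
The plan is to obtain the asymptotics of the Fourier coefficient
$$K^{-1}(\w_{0,0},\b_{x,y})=\frac1{(2\pi i)^2}\int_{|z|=|w|=1}\frac{z^{-y}w^{x}}{a+bz+cw}\,\frac{dz}{z}\,\frac{dw}{w}$$
at $a=b=c=1$ by a saddle-point/stationary-phase analysis as $|x|+|y|\to\infty$. First I would recall (as already established in the excerpt) that $P(z,w)=1+z+w$ has exactly two simple zeros on the unit torus $\{|z|=|w|=1\}$, namely $(z,w)=(e^{2\pi i/3},e^{-2\pi i/3})$ and its complex conjugate $(e^{-2\pi i/3},e^{2\pi i/3})$, which are the points where a triangle with sides $1,z,w$ of unit modulus closes up. Away from these two points the integrand is smooth, so by non-stationary phase the contribution of any region bounded away from the zeros decays faster than any polynomial in $|x|+|y|$; hence only neighborhoods of the two poles matter. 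Thus the whole asymptotic is a sum of two local contributions, which are complex conjugates of one another (since $K^{-1}$ is real), explaining the $\re(\cdots)$ in the statement.

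Next I would compute the local contribution near the pole $(z_0,w_0)=(e^{2\pi i/3},e^{-2\pi i/3})$. Write $z=z_0e^{i\alpha}$, $w=w_0e^{i\beta}$ and expand: the phase $z^{-y}w^{x}=z_0^{-y}w_0^{x}e^{i(-y\alpha+x\beta)}$ contributes the oscillatory factor whose frequency is linear in $(x,y)$, while $P(z,w)=1+z_0e^{i\alpha}+w_0e^{i\beta}\approx iz_0\alpha+iw_0\beta$ to leading order since $1+z_0+w_0=0$. So near this pole the integrand looks like
$$\frac{z_0^{-y}w_0^{x}\,e^{i(x\beta-y\alpha)}}{i(z_0\alpha+w_0\beta)}\,\frac{d\alpha}{2\pi}\,\frac{d\beta}{2\pi},$$
and the two-dimensional integral of $e^{i(x\beta-y\alpha)}/(z_0\alpha+w_0\beta)$ over a neighborhood of the origin is, by a standard residue/Fourier computation (integrate out one variable by residues, picking up the sign of the remaining linear form), of size $O(1/(|x|+|y|))$ with an explicit constant: one gets essentially $\frac1{2\pi}\cdot\frac1{\,\text{(linear form in }x,y\text{ built from }z_0,w_0)}$. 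Plugging in $z_0=e^{2\pi i/3}$, $w_0=e^{-2\pi i/3}$, the prefactor $z_0^{-y}w_0^{x}=e^{-2\pi i y/3}e^{-2\pi i x/3}$... — here I must be careful with signs, but matching the normalization $\b_{x,y}\leftrightarrow \e_1+x(\e_3-\e_1)+y(\e_1-\e_2)$ used in the excerpt, the phase reorganizes to $e^{2\pi i(x-y)/3}$ and the linear form in the denominator becomes, after rotating by the cube roots of unity, proportional to $e^{\pi i/6}x+e^{5\pi i/6}y$, with the constant $\pi$ as claimed. Adding the conjugate pole contribution produces the $\re$, and the error from (a) higher-order terms in the expansion of $P$ and the phase near each pole and (b) the non-stationary far region is $O((|x|+|y|)^{-2})$ after the scaling by $\eps$ (since we are on $\eps\H$, every length is multiplied by $\eps$, giving the overall factor $\eps$ and the error $O(\eps/(|x|+|y|))^2$).

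**The main obstacle** I anticipate is bookkeeping the precise constants — tracking the cube-root-of-unity phases coming from the coordinate convention for $\b_{x,y}$, getting the branch/sign of the residue right when integrating out one of $\alpha,\beta$, and confirming that the two local pieces assemble into exactly $\re\!\big(e^{2\pi i(x-y)/3}/(\pi(e^{\pi i/6}x+e^{5\pi i/6}y))\big)$ rather than some gauge-equivalent variant. The analytic content (two simple poles $\Rightarrow$ linear decay, non-stationary phase controls the rest, uniformity of the $O((|x|+|y|)^{-2})$ remainder in the direction of $(x,y)$) is routine once the pole structure from the Harnack/triangle-inequality discussion is in hand; the delicate part is purely the constant-chasing, which is why a clean way to do it is to first derive the answer up to an undetermined multiplicative constant and phase by the saddle-point heuristic, and then pin those down by comparing with the already-computed special values $K^{-1}(\w_{0,0},\b_{0,0})=\theta_a/\pi$ and $K^{-1}(\w_{0,0},\b_{1,k})=-\sin(k\theta_b)/(\pi k b)$ from Section~\ref{IKM}, which at $a=b=c=1$ (so $\theta_a=\theta_b=\theta_c=\pi/3$) must be reproduced by the asymptotic formula in the appropriate regime.
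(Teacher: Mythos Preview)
Your plan is correct and follows exactly the approach the paper sketches: the paper does not give a detailed proof but explains that the values of $K^{-1}$ are Fourier coefficients of $1/(1+z+w)$, that this function has two simple conjugate poles on the unit torus, and that for large $|x|+|y|$ the Fourier coefficient is governed by the local behavior (first derivatives of $P$) at these poles, giving linear decay. Your stationary-phase localization near $(e^{2\pi i/3},e^{-2\pi i/3})$ and its conjugate, followed by a local linearization of $P$ and a residue computation, is precisely the way to make this heuristic rigorous, and your idea of pinning down the constants by matching against the exact values computed earlier in Section~\ref{IKM} is a sensible safeguard for the bookkeeping you correctly identify as the only delicate point.
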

Here the $\eps$ comes form the scaling of the lattice by $\eps$.

This lemma is really the {\bf fundamental calculation} in the whole
theory, so it is worth understanding.
First of all, recall that the values of $K^{-1}(\b,\w)$ are just the Fourier
coefficients for the function $1/(z+w+1)$. As mentioned earlier, 
if this function were 
smooth on the unit torus $|z|=|w|=1$, its Fourier coefficients
would decay rapidly. However 
$1/(z+w+1)$ has two simple poles on the torus:
at $(z,w)=(e^{2\pi i/3},e^{4\pi i/3})$ and its complex conjugate $(e^{4\pi i/3},e^{2\pi i/3})$.
The Fourier coefficients still exist, since you can integrate
a simple pole in two dimensions, but they decay only
linearly. Moreover, for $|x|+|y|$ large the $(x,y)$-Fourier coefficient
only depends on the value of the function $1/P$ near its poles.
Indeed the coefficient of the linearly-decaying term only depends on the 
first derivatives
of $P$ at its zeros. This implies that the large-scale 
behavior of $K^{-1}$---and hence the edge-correlations in the dimer
model---only depend on these few parameters (the location of the 
zeros of $P$ and its derivatives there).

With this lemma in hand we can compute (when the lattice is scaled by $\eps$)
\begin{eqnarray*}
\E[a_ib_j]&=&-\frac{\eps^2}{4\pi^2(u_2-u_1)}-\frac{\eps^2}{4\pi^2(\bar u_2-\bar u_1)^2}-
\frac{\eps^2e^{4\pi i(x-y)/3}}{4\pi^2|u_2-u_1|^2}-\frac{\eps^2e^{-4\pi i(x-y)/3}}{4\pi^2|u_2-u_1|^2}+\\
&&
+O(\eps^3/|u_1-u_2|^3),
\end{eqnarray*}
where $u_1$ is a point near $a_i$ and $u_2$ a point near $b_j$.
Summing over $i,j$, the terms with oscillating numerators are small,
and this becomes
\begin{equation}\label{integral}
2\re\int_{z_1}^{z_2}\int_{z_3}^{z_4}-\frac{1}{4\pi^2(u_1-u_2)^2}du_1\,du_2  + O(\eps)
\end{equation}
$$=-\frac{1}{2\pi^2}\log\frac{(z_2-z_4)(z_1-z_3)}{(z_2-z_3)(z_1-z_4)}+O(\eps).$$
Remarkably, we get the same integral (\ref{integral}) when the paths are pointing
in the other lattice directions, even when they are pointing in different directions.

\section{Open problems}

We have discussed many aspects of the dimer model.
There are many more avenues of research possible. We list a few of our favorites 
here.

\begin{enumerate}
\item Height mod $k$. What can be said about the random variable $e^{i\alpha h}$
where $\alpha$ is a constant and $h$ is the height function?
This is an analog of the spin-spin correlations in the Ising model
and is a more delicate quantity to measure that the
height function itself. Standard Toeplitz techniques allow one to 
evaluate it in lattice directions, and it is conjectured to be rotationally invariant
(for the uniform square grid dimers, say).
See \cite{Pinson} for some partial results. Can one describe the scaling limit
of this field?

\item All-order expansion. How accurately can one compute the
partition function for dimers in a polygon, such as that in
Figure \ref{heart}? For a given polygon, the leading asymptotics
(growth rate) as $\eps\to0$ is given by somewhat complicated integral
(that we don't know how to evaluate explicitly, in fact). 
What about the asymptotic series in $\eps$ of this partition function?
For the random $3D$ Young diagram, this series is important
in string theory. (Note that 
the partition function for the
uniform honeycomb dimer in a regular
hexagon has an exact form, see Exercise \ref{bppexer}). 

\item Bead model and Young tableaux. See \cite{Boutillier}.
For the $a,b,c$-weighted honeycomb dimer,
consider the limit $b=c=1, a\to0$. Under an appropriate rescaling the
limit is a continuous model, the bead model. The beads lie on parallel
stands and between any two bead on one strand there is a bead
on each of the adjacent strands. This model is closely 
related to Young tableaux. Can one carry the variational principle
over to this setting, getting a limit shape theorem 
(and fluctuations) for random Young tableaux?

\item Double-dimer model. 
Take two independent dimer covers of the grid, and superpose them. Configurations
consist of loops and doubled edges. Conjecturally, in the scaling limit
these loops are fractal and described by an $\text{SLE}_4$ process.
In particular their Hausdorff dimension is conjectured to be $3/2$. 

\end{enumerate}

\end{document}